\makeatletter \@addtoreset{equation}{section} \makeatother
\renewcommand\thetable{\thesection.\@arabic\c@table}
\theoremstyle{plain}
\newtheorem{maintheorem}{Theorem}
\newtheorem{maincorollary}{Corollary}
\newtheorem{theorem}{Theorem}[section]
\newtheorem{lemma}{Lemma}[section]
\newtheorem{definition}{Definition}[section]
\newtheorem{remark}{Remark}[section]
\newtheorem{Thm}{Theorem}[section]
\newtheorem{Lem}[Thm]{Lemma}
\newtheorem{Prop}[Thm]{Proposition}
\newtheorem{Cor}[Thm]{Corollary}
\theoremstyle{remark}
\newtheorem{Def}[Thm] {Definition}
\newtheorem{Rem}[Thm] {Remark}
\newcommand{\eps}{\varepsilon}
\newcommand{\N}{{\mathbb{N}}}
\long\def\begcom#1\endcom{}
\newcommand{\length}{\operatorname{\length}}
\def\length{\operatorname{length}}
\newcommand{\bl} {\begin{lemma}}
\newcommand{\el} {\end{lemma}}
\newcommand{\bt} {\begin{theorem}}
\newcommand{\et} {\end{theorem}}
\newcommand{\bp}{\begin{proof}}
\newcommand{\ep}{\end{proof}}
\newcommand  {\ee} {\end{equation}}
\newcommand  {\beq} {\begin{eqnarray*}}
\newcommand  {\eeq} {\end{eqnarray*}}
\newcommand  {\bd} {\begin{definition}}
\newcommand  {\ed} {\end{definition}}
\newcommand{\cM}{\mathcal{M}}
\newcommand{\cB}{\mathcal{B}}
\def\ep{\noindent{\hfill $\Box$}}
\begin{document}

\title{Entropy of irregular points that are not uniformly hyperbolic}

\author{Xiaobo Hou and Xueting Tian}
\address{Xiaobo Hou, School of Mathematical Sciences,  Fudan University\\Shanghai 200433, People's Republic of China}
\email{20110180003@fudan.edu.cn}

\address{Xueting Tian, School of Mathematical Sciences,  Fudan University\\Shanghai 200433, People's Republic of China}
\email{xuetingtian@fudan.edu.cn}

\begin{abstract}
In this article we prove that for a $C^{1+\alpha}$ diffeomorphism on a compact Riemannian manifold, if there is a hyperbolic ergodic measure whose support is not uniformly hyperbolic, then the topological entropy of the set of irregular points that are not uniformly hyperbolic is larger than or equal to the metric entropy of the hyperbolic ergodic measure. In the process of proof, we give an abstract general mechanism to study topological entropy of irregular points provided that the system has a sequence of nondecreasing invariant compact subsets such that every subsystem has shadowing property and is transitive. 
\end{abstract}

\thanks
{X. Tian is the corresponding author.}
\keywords{Topological entropy, metric entropy, nonuniformly hyperbolic systems, ergodic average, recurrence and transitivity}
\subjclass[2020] { 37B40; 37A35;  37D25; 37B20.   }
\maketitle
\section{Introduction}
Let $f:M \rightarrow M$ be a continuous map on a compact Riemannian manifold $M.$ From Birkhoff's ergodic theorem, the irregular set (also called points with historic behavior,
see \cite{Ruelle,Takens})
\begin{equation*}
	IR(f) := \left\{x\in M: \lim_{n\to\infty}\frac1n\sum_{i=0}^{n-1}\delta_{f^i(x)} \,\, \text{ diverges }\right\}
\end{equation*}
has zero measure for any invariant measure. But this does not mean that  irregular set is empty. In fact, for many systems it was shown to be large and have strong dynamical complexity.
In this paper we focus on the topological entropy of irregular sets. Historically, there has been a considerable amount of works on this. Pesin and Pitskel \cite{Pesin-Pitskel1984} are the first to notice the phenomenon of the irregular set carrying full topological entropy in the case of the full shift on two symbols. Barreira and Schmeling \cite{Barreira-Schmeling2000} studied a broad class of systems including conformal repellers and horseshoes and showed that the irregular set carries full entropy. Later on, Chen $\mathit{et\ al}$ \cite{CKS} considered systems with the speciﬁcation property and and showed that the irregular set carries full entropy. In \cite{DOT} Dong $\mathit{et\ al}$ considered (possibly not transitive) systems with the shadowing property and proved that irregular set carries full topological entropy if it's non-empty. There are also lots of advanced results to show that the irregular points can carry full entropy in systems with various specification-like or shadowing-like properties, for example, see \cite{Pesin1997, Thompson2008, LLST, TianVarandas}.
Apart from topological entropy, it has been proved that irregular sets have strong dynamical complexity in sense of Hausdorff dimension \cite{Barreira-Schmeling2000}, Lebesgue positive measure \cite{Takens, KS}, topological pressure \cite{Thompson2008}, distributional chaos \cite{CT} and residual property \cite{DTY2015,LW2014} etc.

In uniformly hyperbolic systems,  irregular set has strong dynamical complexity as shown by the existing results. In dynamics beyond uniform hyperbolicity, using Katok's approximation of hyperbolic measures by horseshoes, Dong and Tian obtain that the topological entropy of irregular set is bounded from below by metric entropies of ergodic hyperbolic measures.
\begin{Thm}\cite[Corollary B]{DT}\label{dongtian}
	Let $M$ be a compact Riemannian manifold  of dimension at least $2$ and $f$ be a $C^{1+\alpha}$ diffeomorphism.   Then one has
	\begin{equation*}
		\begin{split}
			h_{top}(f,IR(f))&\geq\sup\{h_{top}(f,\Lambda): \Lambda\text{ is a transitive locally maximal hyperbolic set}\}\\
			&=\sup\{h_\mu(f): \mu\text{ is a hyperbolic ergodic measure}\}.
		\end{split}
	\end{equation*}
\end{Thm}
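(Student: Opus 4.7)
The plan is to combine two classical ingredients: Katok's horseshoe approximation for hyperbolic ergodic measures, and the fact that on a transitive locally maximal hyperbolic set the irregular set carries full topological entropy.

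First I would establish the equality
\begin{equation*}
\sup\{h_{top}(f,\Lambda): \Lambda\text{ is a transitive locally maximal hyperbolic set}\}
= \sup\{h_\mu(f): \mu\text{ is a hyperbolic ergodic measure}\}.
\end{equation*}
The inequality $\leq$ is the variational principle applied to each horseshoe, noting that invariant measures supported on a uniformly hyperbolic set are hyperbolic. The inequality $\geq$ is Katok's theorem: given a hyperbolic ergodic measure $\mu$ with $h_\mu(f)>0$ and any $\varepsilon>0$, there is a transitive locally maximal hyperbolic set $\Lambda\subset M$ with $h_{top}(f,\Lambda)>h_\mu(f)-\varepsilon$. (The case $h_\mu(f)=0$ is trivial.) This uses the $C^{1+\alpha}$ hypothesis and $\dim M\geq 2$ in an essential way, via Pesin theory.

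Next I would reduce the entropy inequality for $IR(f)$ to the uniformly hyperbolic situation. Fix a transitive locally maximal hyperbolic set $\Lambda$. The restriction $f|_\Lambda$ has the shadowing property and is transitive, so classical results (for instance the specification property on basic sets, giving Pfister--Sullivan type saturation, or the paper by Chen--K\"upper--Shu \cite{CKS}, or Dong $\mathit{et\ al}$ \cite{DOT}) yield
\begin{equation*}
h_{top}(f|_\Lambda, IR(f|_\Lambda)) = h_{top}(f,\Lambda),
\end{equation*}
provided $IR(f|_\Lambda)\neq\emptyset$. Non-emptiness is automatic whenever $\Lambda$ carries two distinct ergodic measures (one then constructs an irregular point by alternating longer and longer orbit segments approximating each measure, glued via specification/shadowing); this holds for every non-trivial horseshoe, which can be assumed without loss of generality since the case $h_{top}(f,\Lambda)=0$ is again trivial.

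Finally, I would use monotonicity of topological entropy under inclusion of sets. Because $IR(f|_\Lambda)\subset IR(f)$ (divergence of empirical measures is a property of the orbit, independent of the ambient space), the definition of topological entropy for non-compact sets (Bowen or Pesin--Pitskel) gives
\begin{equation*}
h_{top}(f, IR(f)) \geq h_{top}(f, IR(f|_\Lambda)) = h_{top}(f|_\Lambda, IR(f|_\Lambda)) = h_{top}(f,\Lambda).
\end{equation*}
Taking the supremum over all such $\Lambda$ and invoking Katok's approximation closes the proof. The main obstacle, and the only place where smoothness and the dimension hypothesis are used, is Katok's approximation theorem; the rest is a careful bookkeeping of entropy on non-compact invariant subsets combined with the standard specification-type construction of irregular points inside a horseshoe.
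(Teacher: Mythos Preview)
The paper does not itself prove this statement: Theorem~\ref{dongtian} is quoted verbatim as \cite[Corollary~B]{DT} and used only as background motivation, so there is no proof in the paper to compare against. Your argument is nonetheless correct and is exactly the standard route one would take: Katok's horseshoe theorem gives the equality of the two suprema (and requires the $C^{1+\alpha}$ and $\dim M\ge 2$ hypotheses), while the inequality $h_{top}(f,IR(f))\ge h_{top}(f,\Lambda)$ follows because each transitive locally maximal hyperbolic set has specification (hence \cite{CKS} or \cite{DOT} applies), together with the elementary inclusion $IR(f|_\Lambda)\subset IR(f)$ and the fact that Bowen entropy of a subset $Z\subset\Lambda$ is the same whether computed in $(\Lambda,f|_\Lambda)$ or in $(M,f)$. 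This is presumably also the strategy in \cite{DT}, since that paper is precisely about combining Katok's approximation with full-entropy results for irregular sets on horseshoes.
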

In fact Theorem \ref{dongtian}  obtains that the topological entropy of irregular points which are uniformly hyperbolic is bounded from below by metric entropy of ergodic hyperbolic measures, where a point $x\in M$ is said to be uniformly hyperbolic if $\overline{\{f^n(x)\}_{n=0}^\infty}$ is a uniformly hyperbolic set, otherwise, we say $x$ is not uniformly hyperbolic. But Theorem \ref{dongtian} does not imply the topological entropy of irregular points which are not uniformly hyperbolic is positive, and even can not obtain whether the set of irregular points which are not uniformly hyperbolic is non-empty. Recently, Hou, Lin and Tian in \cite{HLT} show that for every non-trival homoclinic class $H(p)$, there is a residual invariant subset $Y\subset H(p)$ such that each $x\in Y$ is a irregular point and  the orbit of $x$ is dense in $H(p)$ (also see \cite{CarVar2021}). Note that every transitive point is not uniformly hyperbolic if $H(p)$ is not uniformly hyperbolic. Then we can see that the set of irregular points which are not uniformly hyperbolic is residual for every non-trival homoclinic class which is not uniformly hyperbolic. From the existence of irregular points which are not uniformly hyperbolic, a natural question arises: whether irregular points which are not uniformly hyperbolic have strong dynamical complexity in sense of Hausdorff dimension, Lebesgue positive measure, topological entropy, topological pressure, and distributional chaos etc?
Generally, people study dynamical complexity of irregular points by specification-like properties (for example see \cite{CKS,DOT}) or a ﬁltration of Pesin blocks (for example see \cite{LLST}). But for irregular points which are not uniformly hyperbolic, on one hand, there is not uniformly hyperbolicity, so we don't have specification-like properties in this set. On the other hand, a ﬁltration of Pesin blocks is just a set with full measure for some invariant measure, irregular points which are not uniformly hyperbolic may be not in the  ﬁltration of Pesin blocks. Fortunately, for homolcinic classes there is a ﬁltration of horseshoes in sense of topology which we find is a useful tool to study irregular points which are not uniformly hyperbolic. And in the present paper, for possibly more applications we give an abstract general mechanism to study irregular points provided that the system has a sequence of nondecreasing invariant compact subsets such that every subsystem has shadowing property and is transitive. 
In our previous paper \cite{HT}, we proved that the irregular points that are not uniformly hyperbolic are strongly distributional chaos.
In this paper, we will prove that the irregular points that are not uniformly hyperbolic have strong dynamical complexity in the sense of topological entropy.

In present paper, we mainly consider two typical systems beyond uniform hyperbolicity: homoclinic classes and hyperbolic ergodic measures.
On one hand, it is known that nontrival homoclinic class exists widely in dynamics beyond uniform hyperbolcity. For example, any non-trivial isolated transitive set of $C^{1}$ generic diffeomorphism is a non-trivial homoclinic class \cite{BD1999},  every transitive set of $C^{1}$ generic diffeomorphism containing a hyperbolic periodic orbit $p$ is contained in the homoclinic class of $p$ \cite{Arna2001}.
On  the other hand, every smooth compact Riemannian manifold of dimension at least 2 admits a hyperbolic ergodic measure \cite{DolPes2002}.
Let $f$ be a $C^{1}$ diffeomorphisms on $M,$ we recall that an ergodic $f$-invariant Borel probability measure is said to be hyperbolic if it has positive and negative but no zero Lyapunov exponents, the homoclinic class of a hyperbolic saddle $p$, denoted by $H(p),$ is the closure of the set of hyperbolic saddles $q$ homoclinically related to $p$ (the stable manifold of the orbit of $q$ transversely meets the unstable one of the orbit of $p$ and vice versa).

Before giving our first result,  we need some definitions and notations.
For any $x \in M$, the orbit of $x$ is $\{f^n(x)\}_{n=0}^\infty$,   denoted by $orb(x,f)$. The $\omega$-limit set of $x$ is the set of all { accumulation} points of $orb(x,f)$,   denoted by ${ \omega(f,x)}$.
A point $x \in M$ is recurrent, if $x \in { \omega(f,x)}$. We denote the sets of all recurrent points and transitive points by $Rec$ and $Trans$ respectively.
Given $x\in M$, denote $V_{f}(x)$ the set of all accumulation points of the empirical measures
$$
\mathcal{E}_n (x):=\frac1{n}\sum_{i=0}^{n-1}\delta_{f^{i} (x)},
$$
where $\delta_x$ is the Dirac measure concentrating on $x$. For a  probability measure $\mu,$  we denote it's support by $$S_\mu:=\{x\in M:\mu(U)>0\ \text{for any neighborhood}\ U\ \text{of}\ x\}.$$   We say an invariant measure $\mu$ is uniformly hyperbolic if $S_{\mu}$ is a uniformly hyperbolic set, otherwise, we say $\mu$ is not uniformly hyperbolic.
Denote $QR(f)=M\setminus IR(f).$ Let $\operatorname{Diff}^{1}(M)$ be the space of $C^{1}$ diffeomorphisms on $M.$
For a subset $Z$ of $M,$ we denote the topological entropy of $f$ on $Z$ by $h_{top}(f,Z)$ (see definition in section \ref{section-preliminaries}).
Let $\operatorname{Diff}^{1}(M)$ denote the space of $C^{1}$ diffeomorphisms on $M.$ Now we state our first result as follows.

\begin{maintheorem}\label{maintheorem-1'}
	Let $f \in \operatorname{Diff}^{1}(M)$ and $H(p)$ be a nontrival homoclinic class that is not uniformly hyperbolic. If $\Lambda\subset H(p)$ is a transitive locally maximal hyperbolic set and there is a periodic point $q\in\Lambda$ such that $q$ is homoclinically related to $p,$ then
	\begin{description}
		\item[(a)] $h_{top}(f,\{x\in M: x\text{ is not uniformly hyperbolic, }\text{each }\mu\in V_f(x) \text{ is uniformly hyperbolic}\}\}\cap IR(f)\cap Rec)\geq h_{top}(f,\Lambda)>0,$
		\item[(b)] $h_{top}(f,\{x\in M: x\text{ is not uniformly hyperbolic, }V_f(x) \text{ consists of one uniformly hyperbolic me-}\\ \text{asure}\}\cap QR(f)\cap Rec)\geq h_{top}(f,\Lambda)>0.$
	\end{description}
	Moreover, if $H(p)$ has uniform separation property, then
	\begin{description}
		\item[(c)] $h_{top}(f,\{x\in M: x\text{ is not uniformly hyperbolic, }\exists\ \mu_1,\mu_{2}\in V_f(x)\text{ s.t. } \mu_1\text{ is uniformly hyperbolic,}\\ \mu_2\text{ is not uniformly hyperbolic}\}\cap IR(f)\cap Rec)\geq h_{top}(f,\Lambda)>0,$
		\item[(d)] $h_{top}(f,\{x\in M: x\text{ is not uniformly hyperbolic, }\text{each }\mu\in V_f(x) \text{ is not uniformly hyperbolic}\}\cap IR(f)\cap Rec)\geq h_{top}(f,\Lambda)>0,$
		\item[(e)] $h_{top}(f,\{x\in M: x\text{ is not uniformly hyperbolic, }V_f(x) \text{ consists of one measure which is not uni-}\\ \text{formly hyperbolic }\}\cap QR(f)\cap Rec)\geq h_{top}(f,\Lambda)>0.$
	\end{description}
	In particular, if further $H(p)=M,$ then $Rec$ can be replaced by $Trans$ in above sets.
\end{maintheorem}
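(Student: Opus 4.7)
The plan is to apply the abstract mechanism advertised in the abstract, namely an entropy estimate for systems admitting a nondecreasing sequence of invariant compact subsets on which the dynamics is transitive and has the shadowing property. For a nontrivial homoclinic class $H(p)$, one can build a nondecreasing filtration of horseshoes $\Lambda=\Lambda_{1}\subset\Lambda_{2}\subset\cdots$ inside $H(p)$ with $\overline{\bigcup_n\Lambda_n}=H(p)$; the hypothesis that $q\in\Lambda$ is homoclinically related to $p$ is exactly what allows $\Lambda$ to be incorporated as the first level, and each $\Lambda_n$ is a locally maximal transitive hyperbolic set, hence transitive and shadowing. Since $H(p)$ is not uniformly hyperbolic while each $\Lambda_n$ is, the filtration is genuinely unbounded, a fact I will exploit repeatedly.

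The entropy lower bound is engineered by a shadowing construction. Starting from an $(n,\varepsilon)$-separated subset of $\Lambda$ of cardinality $\approx\exp(n\,h_{top}(f,\Lambda))$, I treat each separated segment as a ``seed'' and glue onto it, by successive shadowing inside the $\Lambda_n$'s, controlled excursions that visit every $\Lambda_n$ infinitely often and return close to the seed. With sufficiently small shadowing tolerances the resulting true orbit $x$ inherits the separation of its seed, giving the claimed exponential count. Since $x$ enters every $\Lambda_n$, its $\omega$-limit set contains $\overline{\bigcup_n\Lambda_n}=H(p)$, which is not uniformly hyperbolic, so $x$ itself is not uniformly hyperbolic; recurrence is built in by scheduling returns to a neighborhood of the seed point.

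The five cases (a)--(e) are obtained by tuning how the excursions contribute to the empirical measures. For (a), alternate between two distinct ergodic measures on horseshoes so that $V_f(x)$ contains at least two uniformly hyperbolic measures, forcing irregularity. For (b), let the excursions grow slowly enough in proportion to elapsed time that their Ces\`aro contribution vanishes, giving $V_f(x)=\{\mu\}$ for a single uniformly hyperbolic ergodic measure on $\Lambda$ (quasi-regularity). Parts (c)--(e) use the uniform separation property of $H(p)$, which provides ergodic approximations of non-uniformly hyperbolic invariant measures by measures on higher $\Lambda_n$ with quantitative control compatible with the shadowing scheme: for (c) mix one uniformly hyperbolic and one non-uniformly hyperbolic target, for (d) mix two non-uniformly hyperbolic targets, and for (e) steer the averages towards a single non-uniformly hyperbolic target while suppressing the cumulative weight of the remaining excursions. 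When $H(p)=M$ one replaces the recurrence-enforcing returns by $\varepsilon$-dense visits to all of $M$, upgrading $x$ to a transitive point.

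The main obstacle is coordinating four requirements simultaneously: (i) the shadowing tolerances must decrease fast enough that excursions glue into a true orbit, yet slowly enough that the separation of seeds survives; (ii) the excursion schedule must control the convergence or divergence of $\mathcal{E}_n(x)$ to realize the prescribed $V_f(x)$; (iii) the orbit must remain recurrent (resp.\ transitive in the last clause); and (iv) it must visit every $\Lambda_n$ to force non-uniform hyperbolicity of $\omega(f,x)$. I expect the technical heart of the proof to be a telescoping construction that chooses excursion lengths, spacings, and shadowing tolerances jointly via a diagonal procedure over both the seed separation level and the filtration index $n$, with parts (c)--(e) additionally requiring a Katok-type horseshoe approximation mechanism supplied by uniform separation.
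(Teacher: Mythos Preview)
Your overall strategy---build a nondecreasing filtration of horseshoes $\Lambda=\Lambda_1\subset\Lambda_2\subset\cdots$ dense in $H(p)$ and then run a shadowing construction that simultaneously controls entropy, $V_f(x)$, and transitivity inside $H(p)$---is exactly the paper's. The paper packages the shadowing construction into an abstract saturated-set theorem (for suitable compact connected $K$, $h_{top}(f,G_K\cap Trans)=\inf_{\mu\in K}h_\mu(f)$) and then chooses $K$ to be an appropriate line segment in $\mathcal{M}_f(H(p))$ for each of (a)--(e); your plan to do the five cases by hand is the same content unwrapped, and your treatment of (a), (b) and of the final ``$H(p)=M$'' clause is fine.

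There is, however, a real gap in how you handle (c)--(e). You place the separated ``seeds'' inside $\Lambda$ once and for all and treat the visits to higher $\Lambda_n$'s as excursions. For (d) this cannot work: if along some subsequence the seed blocks carry asymptotic density one, then the corresponding limit of $\mathcal{E}_n(x)$ is supported in $\Lambda$ and hence uniformly hyperbolic, contradicting the requirement that \emph{every} $\mu\in V_f(x)$ be non-uniformly hyperbolic; but if the seed blocks never dominate, they no longer furnish the entropy lower bound $h_{top}(f,\Lambda)$. The paper resolves this by letting the seed blocks themselves track ergodic measures $\gamma_k$ supported on \emph{varying} horseshoes $\Lambda_{l_k}$ that approximate full-support targets $\omega_i$ chosen with $h_{\omega_i}(f)\ge h_{top}(f,\Lambda)-\eta$ (each $\omega_i$ is a convex combination $\theta\mu+(1-\theta)\omega$ with $\mu$ of high entropy on $\Lambda$, $\omega$ of full support, and $\theta$ close to $1$). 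The step $\omega_i\rightsquigarrow\gamma_k$ with entropy control is \emph{entropy-density} on $(\Lambda_{l_k},f)$, not uniform separation. Uniform separation of $(H(p),f)$ enters only afterwards, and for a different reason: it guarantees a \emph{single} separation scale $\varepsilon^*$ valid for the separated sets $\Gamma_k\subset S_{\gamma_k}$ uniformly in $k$, even though $l_k\to\infty$ (each horseshoe is expansive, but the expansivity constants may degenerate along the filtration). Your sentence ``uniform separation \ldots provides ergodic approximations of non-uniformly hyperbolic invariant measures by measures on higher $\Lambda_n$'' conflates these two distinct mechanisms, and without disentangling them the construction for (c)--(e) will not close.
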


There are many transitive systems for which the whole space is a homoclinic class including \\
(i) the nonuniformly hyperbolic diffeomorphisms constructed by Katok \cite{Katok-ex}. For arbitrary compact connected two-dimensional manifold $M$, A. Katok proved that there exists a $C^\infty$ diffeomorphism $f$ such that the Riemannian volume $m$ is an $f$-invariant ergodic hyperbolic measure. From \cite{Katok} (or Theorem S.5.3 on Page 694 of book \cite{KatHas}) we know that the support of any ergodic and non-atomic hyperbolic measure of a $C^{1+\alpha}$ diffeomorphism is contained in a non-trivial homoclinic class, then there is a hyperbolic periodic point $p$ such that $M=S_m=H(p).$ Moreover, J. Buzzi \cite{Buzzi1997} showed that every $C^\infty$ diffeomorphism is asymptotically entropy expansive. Since  symptotically entropy expansive implies uniform separation property by \cite[Theorem 3.1]{PS}, then the $C^\infty$ diffeomorphism $f$ has uniform separation property.
\\(ii) generic systems in the space of robustly transitive diffeomorphisms $\operatorname{Diff}^{1}_{RT}(M).$ By the robustly transitive partially hyperbolic diffeomorphisms constructed by Ma\~{n}\'{e} \cite{Mane-ex} and the robustly transitive nonpartially hyperbolic diffeomorphisms constructed by Bonatti and Viana \cite{BV-ex}, we know that $\operatorname{Diff}^{1}_{RT}(M)$ is a non-empty open set in $\operatorname{Diff}^{1}(M).$ Since any non-trivial isolated transitive set of $C^{1}$ generic diffeomorphism is a non-trivial homoclinic class \cite{BD1999},  we have that $$\mathcal{R}_1=\{f\in \operatorname{Diff}^{1}_{RT}(M): \text{ there is a hyperbolic periodic point }  p \text{ such that } M=H(p) \}$$ is generic in  $\operatorname{Diff}^{1}_{RT}(M).$ Moreover,  $C^1$ generically in any dimension, isolated homoclinic classes are entropy expansive \cite{PV2008}.  Since  entropy expansive implies uniform separation property by \cite[Theorem 3.1]{PS}, then we have that $$\mathcal{R}_2=\{f\in \mathcal{R}_1: (M,f)\text{ has uniform separation property} \}$$ is generic in  $\operatorname{Diff}^{1}_{RT}(M).$\\
(iii) generic systems in the space of volume-preserving diffeomorphisms $\operatorname{Diff}^{1}_{vol}(M).$ Let $M$ be a compact connected Riemannian manifold. Bonatti and Crovisier proved in \cite[Theorem 1.3]{BC2004} that there exists a residual $C^1$-subset $\mathcal{R}_1$ of $\operatorname{Diff}^{1}_{vol}(M)$ such that if $f\in\mathcal{R}_1$ then $f$ is a transitive diffeomorphism. Moreover, by its proof on page 79 and page 87 of \cite{BC2004}, if $f\in\mathcal{R}_1$ then there is a hyperbolic periodic point $p$ such that $M=H(p).$ Since the space of diffeomorphisms away from homoclinic tangencies  $\operatorname{Diff}^{1}(M)\setminus\overline{HT}$ is open in $\operatorname{Diff}^{1}(M),$ then
$\mathcal{R}_2=\mathcal{R}_1\cap \operatorname{Diff}^{1}(M)\setminus\overline{HT}$ is generic in $\operatorname{Diff}^{1}_{vol}(M)\setminus\overline{HT}.$ Moreover, every $C^1$ diffeomorphism away from homoclinic tangencies is entropy expansive \cite{LVY2013}.  Note that  entropy expansive implies uniform separation property by \cite[Theorem 3.1]{PS}, if $f\in\mathcal{R}_2$ then  there is a hyperbolic periodic point $p$ such that $M=H(p)$ and $(M,f)$ has uniform separation property.

From \cite{Katok} (or Theorem S.5.3 on Page 694 of book \cite{KatHas}) we know that the support of any ergodic and non-atomic hyperbolic measure of a $C^{1+\alpha}$ diffeomorphism is contained in a non-trivial homoclinic class. In fact, for a hyperbolic ergodic measure $\mu$ and any $\eta>0,$ there exist a hyperbolic periodic point $p$ and a transitive locally maximal hyperbolic set $\Lambda$ such that $S_\mu\subset H(p),$ $p\in \Lambda \subset H(p)$ and $h_{top}(f,\Lambda)\geq h_{\mu}(f)-\eta,$ where $h_{\mu}(f)$ is the metric entropy of $\mu.$ Then we have second result.
\begin{maintheorem}\label{maintheorem-1}
	Let $f \in \operatorname{Diff}^{1}(M).$ If $f$ is a $C^{1+\alpha}$ diffeomorphism preserving a hyperbolic ergodic measure $\mu$ whose support is not uniformly hyperbolic, then
	\begin{description}
		\item[(a)] $h_{top}(f,\{x\in M: x\text{ is not uniformly hyperbolic, }\text{each }\mu\in V_f(x) \text{ is uniformly hyperbolic}\}\}\cap IR(f)\cap Rec)\geq h_{\mu}(f),$
		\item[(b)] $h_{top}(f,\{x\in M: x\text{ is not uniformly hyperbolic, }V_f(x) \text{ consists of one uniformly hyperbolic me-}\\ \text{asure}\}\cap QR(f)\cap Rec)\geq h_{\mu}(f).$
	\end{description}
	Moreover, if $(M,f)$ has uniform separation property, then
	\begin{description}
		\item[(c)] $h_{top}(f,\{x\in M: x\text{ is not uniformly hyperbolic, }\exists\ \mu_1,\mu_{2}\in V_f(x)\text{ s.t. } \mu_1\text{ is uniformly hyperbolic,}\\ \mu_2\text{ is not uniformly hyperbolic}\}\cap IR(f)\cap Rec)\geq h_{\mu}(f),$
		\item[(d)] $h_{top}(f,\{x\in M: x\text{ is not uniformly hyperbolic, }\text{each }\mu\in V_f(x) \text{ is not uniformly hyperbolic}\}\cap IR(f)\cap Rec)\geq h_{\mu}(f),$
		\item[(e)] $h_{top}(f,\{x\in M: x\text{ is not uniformly hyperbolic, }V_f(x) \text{ consists of one measure which is not uni-}\\ \text{formly hyperbolic }\}\cap QR(f)\cap Rec)\geq h_{\mu}(f).$
	\end{description}
	In particular, if further $S_{\mu}=M,$ then $Rec$ can be replaced by $Trans$ in above sets.
\end{maintheorem}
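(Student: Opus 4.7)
The plan is to reduce Theorem B directly to Theorem A via Katok's horseshoe approximation. The paragraph immediately preceding the statement already records the two ingredients I need: first, by Katok's theorem (cf.\ Theorem S.5.3 in Katok--Hasselblatt), the support of any non-atomic ergodic hyperbolic measure of a $C^{1+\alpha}$ diffeomorphism is contained in a non-trivial homoclinic class $H(p)$; second, for every $\eta>0$ one can produce a transitive locally maximal hyperbolic set $\Lambda=\Lambda_\eta\subset H(p)$ containing a periodic point $q$ homoclinically related to $p$, with $h_{top}(f,\Lambda)\geq h_\mu(f)-\eta$. My strategy is to check that the hypotheses of Theorem A are met for each such $\Lambda_\eta$, apply Theorem A to obtain the five entropy lower bounds with $h_{top}(f,\Lambda_\eta)$ on the right-hand side, and then let $\eta\to0$.

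First I would fix a hyperbolic periodic point $p$ with $S_\mu\subset H(p)$. Since the support $S_\mu$ is assumed not to be uniformly hyperbolic and $H(p)$ is a closed invariant set containing $S_\mu$, the homoclinic class $H(p)$ itself cannot be uniformly hyperbolic (a uniformly hyperbolic set has all its closed invariant subsets uniformly hyperbolic). Thus $H(p)$ is a non-trivial homoclinic class that is not uniformly hyperbolic, exactly as required by Theorem A. Next, for each $\eta>0$, Katok's approximation by horseshoes supplies $\Lambda_\eta\subset H(p)$ as above; the horseshoe is transitive locally maximal hyperbolic, and one may take its fundamental periodic point to be $q\in\Lambda_\eta$ homoclinically related to $p$. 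All hypotheses of parts (a) and (b) of Theorem A are therefore in force.

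For parts (c), (d), (e) of Theorem B I additionally assume that $(M,f)$ has the uniform separation property. The uniform separation property is inherited by closed invariant subsystems, so $(H(p),f|_{H(p)})$ also has uniform separation, which is precisely the extra hypothesis demanded by parts (c)--(e) of Theorem A. Applying the corresponding clause of Theorem A to $\Lambda_\eta$ yields, for the appropriate set $E$ of irregular (or quasi-regular) non-uniformly-hyperbolic points,
\begin{equation*}
	h_{top}(f,E)\geq h_{top}(f,\Lambda_\eta)\geq h_\mu(f)-\eta.
\end{equation*}
Since $\eta>0$ is arbitrary, taking $\eta\to0$ gives $h_{top}(f,E)\geq h_\mu(f)$, establishing each of the five inequalities. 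For the final ``In particular'' statement, the assumption $S_\mu=M$ combined with $S_\mu\subset H(p)$ forces $H(p)=M$, and then the corresponding ``$Trans$'' clause of Theorem A transfers verbatim.

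The only technical point to be careful about is the Katok approximation step: one must guarantee both that $\Lambda_\eta$ sits inside $H(p)$ and that it contains a periodic point homoclinically related to $p$. This is standard Pesin-theoretic input and is implicit in the references cited before Theorem B; the homoclinic relation is obtained because in Katok's construction the periodic points of $\Lambda_\eta$ are shadowed by orbits of $\mu$-typical points and hence lie in $H(p)$, while transversality of the invariant manifolds of $q$ and $p$ follows from the $\lambda$-lemma applied inside the homoclinic class. Modulo this standard machinery, the argument is a clean reduction to Theorem A together with a limit in $\eta$.
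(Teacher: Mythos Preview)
Your proposal is correct and follows essentially the same route as the paper's own proof: invoke Katok's horseshoe approximation to produce, for each $\eta>0$, a transitive locally maximal hyperbolic set $\Lambda_\eta\subset H(p)$ with $h_{top}(f,\Lambda_\eta)\geq h_\mu(f)-\eta$, apply Theorem~A, and let $\eta\to 0$. The paper's argument is terser---it allows $p$ to vary with $\eta$ and does not spell out why $H(p)$ fails to be uniformly hyperbolic or why uniform separation passes from $(M,f)$ to $(H(p),f|_{H(p)})$---but your extra justifications are correct and welcome.
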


For diffeomorphism $f$ satisfying that  $$h_{top}(f)=\sup\{h_\mu(f):\mu\text{ is hyperbolic and } S_\mu \text{ is not uniformly hyperbolic}\},$$ we can replace the metric entropy of Theorem \ref{maintheorem-1} by topological entropy of $f$ as following.
\begin{maincorollary}\label{Coro-A}
	Let $f \in \operatorname{Diff}^{1}(M).$ If $f$ is a $C^{1+\alpha}$ diffeomorphism with postive topological entropy, and there is a hyperbolic measure of maximal entropy whose support is not uniformly hyperbolic, or there are a sequence of hyperbolic measures whose metric entropies converge to the topological entropy of $f$ and whose supports are not uniformly hyperbolic,
    then
    \begin{description}
    	\item[(a)] $h_{top}(f,\{x\in M: x\text{ is not uniformly hyperbolic, }\text{each }\mu\in V_f(x) \text{ is uniformly hyperbolic}\}\}\cap IR(f)\cap Rec)=h_{top}(f)>0,$
    	\item[(b)] $h_{top}(f,\{x\in M: x\text{ is not uniformly hyperbolic, }V_f(x) \text{ consists of one uniformly hyperbolic me-}\\ \text{asure}\}\cap QR(f)\cap Rec)=h_{top}(f)>0.$
    \end{description}
    Moreover, if $(M,f)$ has uniform separation property, then
    \begin{description}
    	\item[(c)] $h_{top}(f,\{x\in M: x\text{ is not uniformly hyperbolic, }\exists\ \mu_1,\mu_{2}\in V_f(x)\text{ s.t. } \mu_1\text{ is uniformly hyperbolic,}\\ \mu_2\text{ is not uniformly hyperbolic}\}\cap IR(f)\cap Rec)=h_{top}(f)>0,$
    	\item[(d)] $h_{top}(f,\{x\in M: x\text{ is not uniformly hyperbolic, }\text{each }\mu\in V_f(x) \text{ is not uniformly hyperbolic}\}\cap IR(f)\cap Rec)=h_{top}(f)>0,$
    	\item[(e)] $h_{top}(f,\{x\in M: x\text{ is not uniformly hyperbolic, }V_f(x) \text{ consists of one measure which is not uni-}\\ \text{formly hyperbolic }\}\cap QR(f)\cap Rec)=h_{top}(f)>0.$
    \end{description}
    In particular, if further $S_{\mu}=M,$ then $Rec$ can be replaced by $Trans$ in above sets.
\end{maincorollary}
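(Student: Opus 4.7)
The plan is to deduce Corollary~\ref{Coro-A} directly from Theorem~\ref{maintheorem-1} combined with the universal monotonicity bound $h_{top}(f, Z) \leq h_{top}(f)$ valid for any $Z \subset M$. For each of the five target sets appearing in the statement, this monotonicity supplies the upper inequality in the desired equality $h_{top}(f, Z) = h_{top}(f)$, so only the matching lower bound $h_{top}(f, Z) \geq h_{top}(f)$ requires work.

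Under the first hypothesis, there is a hyperbolic ergodic measure of maximal entropy $\mu$ whose support is not uniformly hyperbolic; since $h_\mu(f) = h_{top}(f)$, applying Theorem~\ref{maintheorem-1} to $\mu$ immediately gives $h_{top}(f, Z) \geq h_\mu(f) = h_{top}(f)$ for the sets in (a) and (b) unconditionally, and for those in (c), (d), (e) under the uniform separation hypothesis, these being precisely the parallel items of Theorem~\ref{maintheorem-1}. The positivity $h_{top}(f) > 0$ is part of the hypothesis. Under the second hypothesis, fix a sequence $\{\mu_n\}$ of hyperbolic ergodic measures with $h_{\mu_n}(f) \to h_{top}(f)$ and every $S_{\mu_n}$ not uniformly hyperbolic. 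Since the five target sets do not depend on $n$, applying Theorem~\ref{maintheorem-1} to each $\mu_n$ gives $h_{top}(f, Z) \geq h_{\mu_n}(f)$, and letting $n \to \infty$ yields the required lower bound. For the final addendum, the replacement of $Rec$ by $Trans$ when $S_\mu = M$ is immediate from the corresponding sentence in Theorem~\ref{maintheorem-1}, applied either to $\mu$ or to the members of the approximating sequence that happen to have full support.

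There is no substantive obstacle here: Corollary~\ref{Coro-A} is essentially a repackaging of Theorem~\ref{maintheorem-1}. The only minor subtlety to watch is that the measures in the hypotheses must be taken to be ergodic in order to feed them into Theorem~\ref{maintheorem-1} (which is the conventional meaning of ``hyperbolic measure'' throughout this paper); if one only has convergence along non-ergodic hyperbolic invariant measures $\nu_n$, one passes to ergodic components via the ergodic decomposition, noting that the decomposition preserves the hyperbolicity type almost surely, that the support of any component is contained in $S_{\nu_n}$ and hence remains not uniformly hyperbolic, and that ergodic components of entropy arbitrarily close to $h_{\nu_n}(f)$ exist, producing an ergodic sequence whose entropies still approach $h_{top}(f)$.
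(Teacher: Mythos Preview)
Your main argument is correct and matches the paper's approach: the paper presents Corollary~\ref{Coro-A} as an immediate consequence of Theorem~\ref{maintheorem-1} (with no separate proof), and you carry this out explicitly by combining the lower bound from Theorem~\ref{maintheorem-1} with the trivial upper bound $h_{top}(f,Z)\le h_{top}(f)$.

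One error to flag in your final paragraph: the claim that an ergodic component $\nu$ of a non-ergodic $\nu_n$ has $S_\nu$ not uniformly hyperbolic ``because $S_\nu\subseteq S_{\nu_n}$'' is false in general --- a compact invariant subset of a non-uniformly-hyperbolic set may well be uniformly hyperbolic (indeed, horseshoes inside $S_{\nu_n}$ are exactly such subsets). Fortunately this side remark is unnecessary: in this paper ``hyperbolic measure'' is defined only for ergodic measures, so the hypotheses of Corollary~\ref{Coro-A} already supply ergodic measures and no passage to ergodic components is needed.
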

\begin{remark}
	For the nonuniformly hyperbolic diffeomorphisms constructed by Katok \cite{Katok-ex}, the robustly transitive partially hyperbolic diffeomorphisms constructed by Ma\~{n}\'{e} \cite{Mane-ex} and the robustly transitive nonpartially hyperbolic diffeomorphisms constructed by Bonatti and Viana \cite{BV-ex},  note that there exists a unique measure of maximal entropy with full suport $($for example see \cite{BFSV2012}$).$ So Corollary \ref{Coro-A} holds for the three systems.
\end{remark}

\textbf{Organization of this paper.} In section \ref{section-preliminaries} we recall some definitions and lemmas used in our main
results. In section \ref{section-3} we study topological entropy of saturated sets for systems with a sequence of nondecreasing invariant compact subsets such that every subsystem has specification property, or has shadowing property and transitivity. 
In section \ref{section-4} we introduct some abtract results for possibly more applications by using the results of saturated sets obtained in section \ref{section-3}, and we give the proofs of Theorem \ref{maintheorem-1'} and \ref{maintheorem-1}.

\section{Preliminaries}\label{section-preliminaries}
Let $(X,d)$ be a nondegenerate $($i.e,
with at least two points$)$ compact metric space, and $f:X \rightarrow X$ be a continuous map. Such $(X,f)$ is called a dynamical system. For a dynamical system $(X,f)$, let $\mathcal{M}(X)$, $\mathcal{M}_{f}(X)$, $\mathcal{M}^{e}_{f}(X)$ denote the space of probability measures, $f$-invariant, $f$-ergodic probability measures, respectively.
Let $\mathbb{N}$, $\mathbb{N^{+}}$ denote non-negative integers, positive integers, respectively.

\subsection{Entropy}
Now let us to recall the definition of topological entropy in \cite{Bowen1973} by Bowen.
For $Z\subseteq X$, $s\geq 0$, $N\in\mathbb{N}^+$, and $\varepsilon >0$, define
$$\mathcal{M}_{N,\varepsilon}^{s}(Z)=\inf \sum_{i}\exp(-sn_{i})$$
where the infimum is taken over all finite or countable families $\{B_{n_{i}}(x_{i},\varepsilon)\}$ such that $x_{i}\in X$, $n_{i}\geq N$ and $\bigcup_{i}B_{n_{i}}(x_{i},\varepsilon)\supseteq Z$, where
$$
B_{n}(x,\varepsilon)=\{y\in X:d_n(x,y)<\varepsilon\},
$$
and
$$d_n(x,y)=\max\{d(f^{i}(x),f^{i}(y)):0\leq i\leq n-1\}.$$
The quantity $\mathcal{M}_{N,\varepsilon}^{s}(Z)$
does not decrease as $N$ increases and $\varepsilon$
decreases, hence the following limits exist:
$$\mathcal{M}_{\varepsilon}^{s}(Z)=\lim_{N\to \infty}\mathcal{M}_{N,\varepsilon}^{s}(Z)$$
and
$$\mathcal{M}^{s}(Z)=\lim_{\varepsilon\to 0}\mathcal{M}_{\varepsilon}^{s}(Z).$$
The Bowen topological entropy $h_{top}(f,Z)$ is defined as
a critical value of the
parameter $s$, where $\mathcal{M}^{s}(Z)$ jumps from $\infty$ to $0$, i.e.
\begin{equation*}
	\mathcal{M}^{s}(Z)=
	\begin{cases}
		0,&s>h_{top}(f,Z),\\
		\infty,&s<h_{top}(f,Z).
	\end{cases}
\end{equation*}

Some basic properties of Bowen topological entropy are as following:
\begin{Prop}\cite[Proposition 2]{Bowen1973}\label{prop-AA}
	Suppose that $(X,f)$ is a dynamical system. Then for any $Z\subseteq X,$ one has
	\begin{description}
		\item[(1)] $h_{top}(f,f(Z))=h_{top}(f,Z).$
		\item[(2)] $h_{top}(f^k,Z)=kh_{top}(f,Z)$ for any $k\in\mathbb{N^{+}}.$
	\end{description}
\end{Prop}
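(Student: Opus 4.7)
The plan is to handle both items by translating covers between the two sides of each identity, with uniform continuity of $f$ as the only analytic input.

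For item (1), the easy direction $h_{top}(f,f(Z))\le h_{top}(f,Z)$ will follow from the pointwise observation $f(B_n(x,\varepsilon))\subseteq B_{n-1}(f(x),\varepsilon)$: any cover $\{B_{n_i}(x_i,\varepsilon)\}$ of $Z$ with $n_i\ge N$ induces a cover $\{B_{n_i-1}(f(x_i),\varepsilon)\}$ of $f(Z)$ with $n_i-1\ge N-1$, and since
$$\sum_i\exp(-s(n_i-1))\ =\ e^s\sum_i\exp(-sn_i),$$
one gets $\mathcal{M}_\varepsilon^{s}(f(Z))\le e^s\mathcal{M}_\varepsilon^{s}(Z)$, so the critical values agree. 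For the reverse inequality I fix $\varepsilon>0$, use uniform continuity to pick $\delta>0$ with $d(a,b)<\delta\Rightarrow d(f(a),f(b))<\varepsilon$, and cover $X$ by finitely many sets $P_1,\ldots,P_r$ of diameter less than $\delta$. Given a cover $\{B_{n_i}(x_i,\varepsilon)\}$ of $f(Z)$, for every pair $(i,\ell)$ with $f^{-1}(B_{n_i}(x_i,\varepsilon))\cap P_\ell\neq\emptyset$ I pick a reference point $y_{i,\ell}$ in this intersection. A short triangle inequality combining the bounds from $B_{n_i}(x_i,\varepsilon)$ with the one-step modulus coming from $P_\ell$ gives
$$f^{-1}(B_{n_i}(x_i,\varepsilon))\cap P_\ell\ \subseteq\ B_{n_i+1}(y_{i,\ell},2\varepsilon),$$
so the family $\{B_{n_i+1}(y_{i,\ell},2\varepsilon)\}$ covers $Z$ with associated sum at most $r\,e^{-s}\sum_i\exp(-sn_i)$. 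The factor $r=r(\varepsilon)$ is finite and independent of the cover, hence harmless at the level of the critical exponent after letting $N\to\infty$ and then $\varepsilon\to 0$.

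For item (2), everything rests on two comparisons between the Bowen balls of $f$ and those of $f^k$. The tautological inclusion says that the $f$-Bowen ball of length $kn$ and radius $\varepsilon$ about $x$ is contained in the $f^k$-Bowen ball of length $n$ and the same radius; conversely, uniform continuity of $f,f^2,\ldots,f^{k-1}$ produces, for any prescribed $\varepsilon'>0$, a smaller $\varepsilon>0$ such that the $f^k$-Bowen ball of length $n$ and radius $\varepsilon$ sits inside the $f$-Bowen ball of length $kn$ and radius $\varepsilon'$. Starting from a cover of $Z$ by $f$-Bowen balls of lengths $n_i$, writing $n_i=kq_i+r_i$ with $0\le r_i<k$ and invoking the first inclusion converts it into an $f^k$-cover of $Z$ whose weights, after matching exponents via $s'=sk$, give $h_{top}(f^k,Z)\le k\,h_{top}(f,Z)$; conversely, applying the second inclusion to an arbitrary $f^k$-cover yields an $f$-cover whose weights with $s'=sk$ produce the matching reverse inequality.

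The delicate step is the lower bound in~(1): because $f^{-1}$ does not preserve the Bowen-ball structure, the would-be $0$-th coordinate of a pulled-back centre is completely unconstrained. The finite partition $\{P_\ell\}$ is introduced precisely to repair this, supplying via the uniform continuity of $f$ the missing zeroth-level bound at the cost of only a multiplicative constant $r(\varepsilon)$ that is independent of $N$ and of the chosen cover. Once this is in place, item~(2) reduces to the bookkeeping indicated above using the two inclusion relations between the $f$- and $f^k$-Bowen balls.
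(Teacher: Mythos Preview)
The paper does not prove this proposition at all: it is quoted verbatim from Bowen's original article \cite{Bowen1973} and used as a black box, so there is no ``paper's own proof'' to compare against. Your argument is a correct reconstruction of the classical proof and is essentially the one Bowen gives. The two key ideas you use---(i) repairing the missing zeroth coordinate in the pullback for item~(1) by intersecting with a finite partition of small diameter, at the cost of a harmless multiplicative constant $r(\varepsilon)$, and (ii) the pair of Bowen-ball inclusions between $f$ and $f^k$ in item~(2)---are exactly the standard ones. One small point worth making explicit: in item~(1) you should also require $\delta\le 2\varepsilon$ (or simply $\delta\le\varepsilon$) so that the zeroth coordinate bound $d(z,y_{i,\ell})<2\varepsilon$ is actually guaranteed by $z,y_{i,\ell}\in P_\ell$; and when passing to the critical exponent it helps to note that $\mathcal{M}^s_\varepsilon$ is nondecreasing as $\varepsilon\downarrow 0$, so $\mathcal{M}^s(f(Z))=0$ forces $\mathcal{M}^s_\varepsilon(f(Z))=0$ for every $\varepsilon$, which neutralises the $\varepsilon$-dependent factor $r(\varepsilon)$.
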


Brin and Katok \cite{BK} introduced the local measure-theoretical lower entropies of $\mu$ for each $\mu\in\mathcal{M}(X)$, by

\begin{equation}
	\underline{h}_{\mu}(f)=\int\underline{h}_{\mu}(f,x)d\mu,
\end{equation}
where
\begin{equation}\label{equation-GI}
	\underline{h}_{\mu}(f,x)=\lim_{\varepsilon\to 0}\liminf_{n\to\infty}-\frac{1}{n}\log\mu(B_{n}(x,\varepsilon)).
\end{equation}
Feng and Huang in \cite{Feng-Huang} proved the following result.
\begin{Lem}\label{lemma-aa}\cite[Theorem 1.2]{Feng-Huang}
	If $Z\subset X$ is is non-empty and compact, then
	\begin{equation}
		h_{top}(f,Z)= \sup\{\underline{h}_{\mu}(f):\mu\in\mathcal{M}(X),\ \mu(Z)=1\}.
	\end{equation}
\end{Lem}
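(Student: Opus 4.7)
The plan is to prove the equality $h_{top}(f,Z) = \sup\{\underline{h}_\mu(f) : \mu \in \mathcal{M}(X),\ \mu(Z) = 1\}$ by establishing the two inequalities separately, adapting the Frostman-type philosophy to the Carath\'eodory--Pesin structure used to define Bowen topological entropy.

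For the inequality $h_{top}(f,Z) \geq \underline{h}_\mu(f)$ for each $\mu$ with $\mu(Z) = 1$, I would fix $s < \underline{h}_\mu(f)$ and a small $\varepsilon > 0$. By \eqref{equation-GI}, for $\mu$-a.e.\ $x \in Z$ there is $N_0(x)$ with $\mu(B_n(x,\varepsilon)) \leq e^{-sn}$ whenever $n \geq N_0(x)$; setting $Z_N := \{x \in Z : N_0(x) \leq N\}$ gives $\mu(Z_N) \to 1$. Given any cover $\{B_{n_i}(x_i,\varepsilon/2)\}_{i}$ of $Z_N$ with $n_i \geq N$, any ball meeting $Z_N$ is contained in $B_{n_i}(y_i,\varepsilon)$ for some $y_i \in Z_N$ (triangle inequality in $d_{n_i}$), so $\mu(B_{n_i}(x_i,\varepsilon/2)) \leq e^{-sn_i}$, and summation yields $\sum_i e^{-sn_i} \geq \mu(Z_N)$. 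Hence $\mathcal{M}^s_{N,\varepsilon/2}(Z_N) \geq \mu(Z_N)$, which pushes $\mathcal{M}^s(Z) \geq 1$ and forces $h_{top}(f,Z) \geq s$.

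For the reverse inequality, fix $0 < s < h_{top}(f,Z)$; the goal is to produce a Borel probability measure $\mu$ supported on (a suitable subset of) $Z$ satisfying $\underline{h}_\mu(f,x) \geq s$ almost everywhere. The key device is a weighted variant of the Pesin outer measure: for each $\varepsilon > 0$ and $N \in \mathbb{N}^+$, define $\mathcal{W}^s_{N,\varepsilon}(Z)$ by minimising $\sum_i c_i e^{-s n_i}$ over all choices of nonnegative weights $c_i$ and Bowen balls $B_{n_i}(x_i,\varepsilon)$ with $n_i \geq N$ and $\sum_i c_i \chi_{B_{n_i}(x_i,\varepsilon)} \geq \chi_Z$. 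This quantity is comparable to $\mathcal{M}^s_{N,\varepsilon}(Z)$ up to mild scale changes in $\varepsilon$, but unlike $\mathcal{M}^s$ it is linear-programming convex and admits a dual. A minimax identity then gives $\mathcal{W}^s_{N,\varepsilon}(Z) = \sup \mu(Z)$, the supremum running over finite Borel measures $\mu$ on $X$ obeying $\mu(B_n(y,\varepsilon)) \leq e^{-sn}$ for every $y \in X$ and $n \geq N$. Picking $\varepsilon_k \to 0$, $s_k \nearrow h_{top}(f,Z)$, normalising, restricting to a compact subset of $Z$ that carries positive duality mass, and extracting a weak-$*$ subsequential limit $\mu$, the mass bounds translate directly (via \eqref{equation-GI}) into $\underline{h}_\mu(f,x) \geq s_k$ at $\mu$-a.e.\ $x$, whence $\sup_\mu \underline{h}_\mu(f) \geq h_{top}(f,Z)$.

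The main obstacle is the reverse direction: Bowen balls have no natural nesting or bounded-multiplicity property (their shapes depend sharply on the orbit), which blocks any direct Vitali-type argument and any naive transplant of Frostman's lemma from Euclidean geometry. The remedy is the detour through the weighted outer measure $\mathcal{W}^s$ and the minimax identity, which replaces a geometric covering step by a convex-duality step. Verifying the comparability $\mathcal{W}^s_{N,\varepsilon}(Z) \asymp \mathcal{M}^s_{N,2\varepsilon}(Z)$ and establishing the duality identity with the correct compactness on the measure side constitute the technical heart; once these are in hand, the construction of $\mu$ and the verification of its local entropy lower bound go through by weak-$*$ compactness and a diagonal extraction.
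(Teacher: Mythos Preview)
The paper does not supply a proof of this lemma; it is quoted verbatim from Feng--Huang \cite[Theorem~1.2]{Feng-Huang} and used as a black box. So there is no ``paper's own proof'' to compare against. Your outline is essentially the Feng--Huang argument itself: the upper bound via a Billingsley-type covering estimate, and the lower bound via Howroyd's weighted-Hausdorff/minimax construction of a Frostman measure adapted to Bowen balls.

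One imprecision worth flagging in your first direction: from $s<\underline{h}_\mu(f)=\int\underline{h}_\mu(f,x)\,d\mu$ you cannot conclude that $\underline{h}_\mu(f,x)>s$ for $\mu$-a.e.\ $x$, only for $x$ in a set of positive $\mu$-measure. The rest of your argument still works with that weaker input (you get $\mathcal{M}^s(Z)\geq\mu(\{x:\underline{h}_\mu(f,x)>s\})>0$, which is enough to force $h_{top}(f,Z)\geq s$), but the sentence as written overstates what is available. In the reverse direction your sketch is accurate in spirit; the genuinely delicate steps are exactly the ones you single out (comparability of $\mathcal{W}^s$ with $\mathcal{M}^s$ at a doubled scale, and the minimax identity producing the Frostman measure), and these are carried out in full in the Feng--Huang paper.
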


Next, we recall the definition of metric entropy.
We call $(X,  \mathcal{B},  \mu)$ a probability space if $\cB$ is a Borel $\sigma$-algebra on $X$ and $\mu$ is a probability measure on $X$.   For a finite measurable partition $\xi=\{A_1,  \cdots,  A_n\}$ of a probability space $(X,  \cB,  \mu)$,   define
$$H_\mu(\xi)=-\sum_{i=1}^n\mu(A_i)\log\mu(A_i).  $$
Let $f:X\to X$ be a continuous map preserving $\mu$.   We denote by $\bigvee_{i=0}^{n-1}f^{-i}\xi$ the partition whose element is the set $\bigcap_{i=0}^{n-1}f^{-i}A_{j_i},  1\leq j_i\leq n$.   Then the following limit exists:
$$h_\mu(f,  \xi)=\lim_{n\to\infty}\frac1n H_\mu\left(\bigvee_{i=0}^{n-1}f^{-i}\xi\right)$$
and we define the metric entropy of $\mu$ as
$$h_{\mu}(f):=\sup\{h_\mu(f,  \xi):\xi~\textrm{is a finite measurable partition of X}\}.  $$

\subsection{Specification property and shadowing property.}
First, we recall the definition of specification property.
\begin{Def}\label{definition of specification}
	We say a dynamical system $(X,f)$ has specification property if, for
	any $\varepsilon > 0$, there is a positive integer $K_\varepsilon$ such that for any integer $s\ge$ 2, any set $\{x_1,x_2,\cdots,x_s\}$ of $s$ points of $X$, and any sequence\\
	$$0 = a_1 \le b_1 < a_2 \le b_2 < \cdots < a_s \le b_s$$ of 2$s$ non-negative integers with $a_{m+1}-b_m \ge K_\varepsilon$ for $m = 1,2,\cdots,s-1$, there is a point $z$ in $X$ such that
	$$d(f^j(z),f^{j-a_i}(x_i))\leq\varepsilon \text{ for any } a_i\leq j\leq b_i\text{ and }1\leq i\leq s.$$
	In other words, $\cap_{i=1}^{s}f^{-a_i}\overline{B}_{b_i-a_i}(x_i,\varepsilon)\neq\emptyset,$ where $\overline{B}_{n}(x,\varepsilon)=\{y\in X:d_n(x,y)\leq\varepsilon\}.$
\end{Def}

Now, we recall the definition of shadowing property.
\begin{Def}
	An infinite sequence $(x_{n})_{n=0}^{\infty}$ of points
	in $X$ is a $\delta$-pseudo-orbit for a dynamical system $(X,f)$ if $d(x_{n+1},f(x_{n}))<\delta$ for each $n \in \mathbb{N}$. We say that a dynamical system $(X,f)$ has the shadowing property if for every $\varepsilon>0$ there is a $\delta >0$ such that any $\delta$-pseudo-orbit $(x_{n})_{n=0}^{\infty}$ can be $\varepsilon$-traced by a point
	$y\in X$, that is $d(f^{n}(y),x_{n})<\varepsilon$ for all $n \in \mathbb{N}$.
\end{Def}

Recall that $(X,f)$ is mixing if for any non-empty open sets $U,V\subseteq X,$ there is $N\in\mathbb{N}$ such that $f^{-n}U\cap V\neq\emptyset$ for any $n\geq N.$
\begin{Prop}\label{prop-specif}\cite[Proposition 23.20]{Sig}
	Suppose that a dynamical system $(X,f)$ has shadowing property and is mixing, then $(X,f)$ has specification property.
\end{Prop}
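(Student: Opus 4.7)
The plan is to adapt the classical Sigmund argument: use the shadowing property to turn approximate orbits into genuine orbits, and use the mixing assumption together with compactness to glue finite orbit segments into one long pseudo-orbit.

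Fix $\varepsilon>0$. I would first invoke shadowing to obtain $\delta=\delta(\varepsilon)>0$ such that every $\delta$-pseudo-orbit is $\varepsilon$-shadowed, and then by compactness choose a finite $(\delta/2)$-dense subset $\{y_1,\ldots,y_N\}\subseteq X$. Applying mixing to the finitely many pairs of open balls $B(y_i,\delta/2)$, $B(y_j,\delta/2)$ produces integers $N_{ij}$ such that $f^{n}(B(y_i,\delta/2))\cap B(y_j,\delta/2)\neq\emptyset$ for every $n\geq N_{ij}$, and I would then set $K_\varepsilon:=\max_{i,j}N_{ij}+1$. This constant depends only on $\varepsilon$, which is exactly what the specification property requires.

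Given data $\{x_1,\ldots,x_s\}$ and times $0=a_1\leq b_1<a_2\leq\cdots\leq b_s$ with $a_{m+1}-b_m\geq K_\varepsilon$, I would construct an infinite $\delta$-pseudo-orbit $(\xi_j)_{j\geq 0}$ as follows. On each data block $a_m\leq j\leq b_m$, set $\xi_j:=f^{j-a_m}(x_m)$, so the sequence is an exact orbit there. For each gap $b_m<j<a_{m+1}$, pick $y_{i_m}$ within $\delta/2$ of $f^{b_m-a_m+1}(x_m)$ and $y_{j_m}$ within $\delta/2$ of $x_{m+1}$; since $a_{m+1}-b_m-1\geq N_{i_m j_m}$, mixing supplies a bridging point $w_m$ with $d(w_m,y_{i_m})<\delta/2$ and $d(f^{a_{m+1}-b_m-1}(w_m),y_{j_m})<\delta/2$. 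Fill the gap by $\xi_{b_m+k}:=f^{k-1}(w_m)$ for $1\leq k\leq a_{m+1}-b_m-1$ and, after $b_s$, extend by $\xi_j:=f^{j-a_s}(x_s)$. Two triangle-inequality checks at the entrance and exit of each gap show that every consecutive error stays strictly below $\delta$. Applying shadowing then yields $z\in X$ with $d(f^j(z),\xi_j)<\varepsilon$ for every $j\geq 0$, which restricted to each data block is exactly the specification inequality $d(f^j(z),f^{j-a_m}(x_m))\leq\varepsilon$.

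The only delicate point is the bookkeeping of the $\delta/2$-budget at each transition: the two small errors introduced at the entrance of a gap (the one between the net point and the true orbit endpoint, and the one between the bridging point and the net point) must sum to at most $\delta$, and similarly at the exit of the gap. This is precisely what forces the net to be chosen $(\delta/2)$-dense rather than merely $\delta$-dense, and why a single $+1$ must be added to the mixing times in the definition of $K_\varepsilon$ to leave room for one step of genuine orbit after the bridging point. Once these choices are in place, the rest of the argument is mechanical, and no further hypotheses on $f$ beyond shadowing and mixing are needed.
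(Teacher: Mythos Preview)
Your argument is correct and is essentially the classical Sigmund proof. Note, however, that the paper does not give its own proof of this proposition: it is stated with a citation to \cite[Proposition~23.20]{Sig} (Denker--Grillenberger--Sigmund) and used as a black box, so there is no in-paper argument to compare against. What you wrote is precisely the standard proof one finds in that reference: a finite $(\delta/2)$-net plus mixing to manufacture uniform bridging times, then shadowing the resulting pseudo-orbit. One tiny expository quibble: your explanation of the ``$+1$'' in $K_\varepsilon$ is slightly off---the point is simply that the gap between blocks contains $a_{m+1}-b_m-1$ indices rather than $a_{m+1}-b_m$, so you need $a_{m+1}-b_m-1\geq N_{i_mj_m}$, which is guaranteed by $K_\varepsilon=\max_{i,j}N_{ij}+1$. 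The computations themselves are fine.
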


\begin{Def}\label{definition of entropy-dense property}
	We say $f$ satisfies the entropy-dense property if for any $\mu \in \mathcal{M}_{f}(X)$, for any neighbourhood $G$ of $\mu$ in $\mathcal{M}(X)$, and for any $\eta >0$, there exists a closed $f$-invariant set $\Lambda_{\mu}\subseteq X$, such that  $\mathcal{M}_{f}(\Lambda_{\mu})\subseteq G$ and $h_{top}(f,\Lambda_{\mu})>h_{\mu}-\eta$. By the classical variational principle, it is equivalent that for any neighbourhood $G$ of $\mu$ in $\mathcal{M}(X)$, and for any $\eta >0$, there exists a $\nu \in \mathcal{M}_{f}^{e}(X)$ such that $h_{\nu}>h_{\mu}-\eta$ and $\mathcal{M}_{f}(S_{\nu})\subseteq G$.
\end{Def}
By \cite[Proposition 2.3(1)]{PS}, entropy-dense property holds for systems with approximate product property. From definitions, if a dynamical system has specification property or has shadowing property and transitivity, then it has  approximate product property. So we have the following.

\begin{Prop}\cite[Proposition 2.3 (1)]{PS}\label{proposition of entropy-dense property}
	Suppose that $(X,f)$ is a dynamical system satisfying one of the following:
	\begin{description}
		\item[(1)] specification property;
		\item[(2)] shadowing property and transitivity.
	\end{description}
	Then $(X,f)$ has entropy-dense property.
\end{Prop}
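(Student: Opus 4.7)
The plan is to handle both hypotheses simultaneously by extracting from each a common ``gluing with bounded gap'' mechanism, then running a quasi-horseshoe construction in the style of Pfister--Sullivan. As observed immediately before the statement, both (1) and (2) produce, for every $\varepsilon > 0$, a gap constant $K = K_\varepsilon$ with the following property: given any finite list of orbit segments of $X$, some orbit $\varepsilon$-traces them in sequence with gaps of length $K$ between consecutive segments. Under (1) this is literally the specification property; under (2) one uses transitivity to obtain a $\delta$-pseudo-orbit of uniformly bounded length linking any two prescribed orbit segments, then invokes shadowing to realize it. Via the ergodic decomposition and affinity of the entropy map I first reduce to ergodic $\mu$: a general $\mu$ is approximated in $\mathcal{M}(X)$ by a finite convex combination of ergodic components with weighted entropy at least $h_\mu(f) - \eta/2$, and per-component constructions can be interlaced using the same $K$ to give the general case.

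For ergodic $\mu$, shrink $G$ to a convex open subneighborhood $G_0$ of $\mu$ and fix small $\varepsilon > 0$. Combining the Katok / Brin--Katok entropy formula with Birkhoff's theorem applied to a finite family of continuous test functions cutting out $G_0$ yields, for every sufficiently large $n$, an $(n,\varepsilon)$-separated set $E_n \subset X$ of cardinality at least $\exp(n(h_\mu(f) - \eta/2))$ with $\mathcal{E}_n(x) \in G_0$ for every $x \in E_n$. The gluing mechanism then produces, for each $s \geq 1$ and each $(x_1,\dots,x_s) \in E_n^s$, a point $z$ whose orbit $\varepsilon$-traces $x_i$ on the block $[(i-1)(n+K),\ (i-1)(n+K)+n-1]$; call the collection of such $z$'s $Y_s$. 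Define $\Lambda_\mu$ as the closed $f$-invariant set obtained from the accumulation points of an infinite concatenation, e.g.\ the $\omega$-limit set of such an orbit, or the intersection of closures of forward iterates of $\bigcup_s Y_s$.

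Two things must then be verified. For entropy, distinct tuples in $E_n^s$ give $(s(n+K),\varepsilon/2)$-separated representatives in $Y_s$, whence
\[
h_{top}(f,\Lambda_\mu) \;\geq\; \frac{n}{n+K}\bigl(h_\mu(f) - \tfrac{\eta}{2}\bigr) \;\geq\; h_\mu(f) - \eta
\]
once $n$ is chosen so large that $K/n$ is negligible. For the measure condition, any $\nu \in \mathcal{M}_f(\Lambda_\mu)$ is a weak$^\ast$ accumulation point of empirical measures of orbits in $\Lambda_\mu$; by construction such orbits are concatenations of length-$n$ segments whose individual empirical measures lie in $G_0$, separated by gaps of length $\leq K$, so convexity of $G_0$ together with the smallness of $K/(n+K)$ forces $\nu \in G_0 \subset G$. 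The main obstacle is establishing this second point for \emph{every} $\nu$, including those not coming directly from the construction: it demands the correct order of quantifiers (fix $G_0$, $\varepsilon$, $K$ first, only then choose $n$ large) and the fact that any orbit in the closure still inherits the segment/gap pattern up to arbitrarily small weak$^\ast$ error. A secondary technical point is choosing the limit procedure for $\Lambda_\mu$ so that it is both closed and genuinely $f$-invariant despite being assembled from the non-invariant family $\{Y_s\}_s$.
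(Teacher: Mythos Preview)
The paper does not give a proof of this proposition at all: it is stated as a citation of \cite[Proposition~2.3(1)]{PS}, preceded only by the one-line remark that both hypotheses (specification; shadowing plus transitivity) imply the \emph{approximate product property}, and that the cited result of Pfister--Sullivan then yields entropy-density. So the paper's ``proof'' is a two-step reduction: (i) check from the definitions that each hypothesis forces approximate product property, (ii) invoke \cite{PS}.

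Your proposal takes a genuinely different route: rather than passing through the approximate product property and citing \cite{PS}, you unpack the Pfister--Sullivan construction directly under either hypothesis. The outline is sound --- the extraction of a uniform gap constant $K_\varepsilon$ from shadowing plus transitivity via a finite $\delta$-cover and compactness, the Katok entropy formula combined with Birkhoff to produce the separated sets $E_n\subset X_{n,G_0}$, and the concatenation horseshoe are all correct, and you correctly flag the two delicate points (controlling \emph{all} $\nu\in\mathcal{M}_f(\Lambda_\mu)$, and making $\Lambda_\mu$ genuinely closed and invariant). What your approach buys is self-containment; what the paper's approach buys is brevity, since the hard work is already done in \cite{PS} at the greater generality of approximate product property. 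One minor comment: your reduction to ergodic $\mu$ via interlacing per-component horseshoes is valid but unnecessary --- the Pfister--Sullivan argument handles non-ergodic $\mu$ directly by the same mechanism, since Katok's formula for $(n,\varepsilon)$-separated sets in $X_{n,G_0}$ does not require ergodicity once one uses the ergodic decomposition only to locate a single ergodic component of nearly full entropy and then works with that component's generic points.
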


\subsection{Uniform separation property}
For $\delta >0$ and $\varepsilon >0$, two points $x$ and $y$ are $(\delta,n,\varepsilon)$-separated if
\begin{equation*}
	|\{0\leq j\leq n-1:d(f^{j}(x),f^{j}(y))>\varepsilon\}| \geq \delta n.
\end{equation*}
A subset $E$ is $(\delta,n,\varepsilon)$-separated  if any pair of different points of $E$ are $(\delta,n,\varepsilon)$-separated. Let $F\subseteq \cM(X)$  be a neighbourhood of $\nu \in \cM_{f}(X)$. Define $X_{n,F}:=\{x\in X:\mathcal{E}_{n}(x)\in F\}.$

We need to use uniform separation property in the following form  which is slightly different from the original definition given by Pister and Sullivan in \cite{PS}.
\begin{Def}\label{def-uniform-separation}
	We say that the dynamical system $(X,f)$ satisfies uniform separation property, if the following holds. For any $\eta >0$, there exists $\delta^{*}>0$, $\varepsilon^{*}>0$ such that for $\mu$  ergodic and
	any neighbourhood $F\subseteq \cM(X)$ of $\mu$, there exists $n_{F,\mu,\eta}^{*}$,  such that for $n\geq n_{F,\mu,\eta}^{*}$, there is a $(\delta^{*},n,\varepsilon^{*})$-separated set $\Gamma_n\subseteq X_{n,  F}\cap S_{\mu}$ with
	\begin{equation*}
		|\Gamma_n|\geq e^{n(h_{\mu}(f)-\eta)}.
	\end{equation*}
\end{Def}
The only difference between Definition \ref{def-uniform-separation} and the original definition is that $\Gamma_n\subseteq X_{n,  F}$ in the latter.  In \cite[Theorem 3.1]{PS}, it's proved that if a dynamical system is expansive, or is asymptotically $h$-expansive, then it has the uniform separation property of the original form. By \cite[Theorem 3.1]{PS}, we have the following.
\begin{Prop}\label{proposition-AA}
	If $(X,  f)$ is expansive or asymptotically $h$-expansive, then $(X,f)$ satisfies uniform separation property.
\end{Prop}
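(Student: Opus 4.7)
The assertion is a refinement of \cite[Theorem 3.1]{PS}: the sole change is to strengthen $\Gamma_n\subseteq X_{n,F}$ to $\Gamma_n\subseteq X_{n,F}\cap S_\mu$. My plan is to deduce the refined version from the original by applying \cite[Theorem 3.1]{PS} to the subsystem $(S_\mu,f|_{S_\mu})$ rather than to the ambient $(X,f)$, exploiting the fact that $\mu(S_\mu)=1$ so the separated sets produced by PS inside $S_\mu$ automatically satisfy the stronger requirement.

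First I will observe that both hypotheses pass to closed $f$-invariant subsystems: if $(X,f)$ is expansive with constant $e_0$, then for every closed invariant $Y\subseteq X$ the restriction $(Y,f|_Y)$ is expansive with constant at least $e_0$; and if $(X,f)$ is asymptotically $h$-expansive, then so is $(Y,f|_Y)$, with tail entropy function bounded above by that of $(X,f)$. Since $\mu$ is invariant, $S_\mu$ is closed and $f$-invariant, so $(S_\mu,f|_{S_\mu})$ inherits the hypothesis of Proposition \ref{proposition-AA}.

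Given $\eta>0$, I will then apply the original PS theorem to $(S_\mu,f|_{S_\mu})$ to obtain constants $\delta^{*},\varepsilon^{*}>0$. For any neighborhood $F\subseteq\mathcal{M}(X)$ of $\mu$, the trace $F':=F\cap\mathcal{M}(S_\mu)$ is a neighborhood of $\mu$ inside $\mathcal{M}(S_\mu)$, and PS supplies, for every sufficiently large $n$, a $(\delta^{*},n,\varepsilon^{*})$-separated set
$$\Gamma_n\subseteq\{x\in S_\mu:\mathcal{E}_n(x)\in F'\}\subseteq X_{n,F}\cap S_\mu$$
with $|\Gamma_n|\geq e^{n(h_\mu(f)-\eta)}$, which is exactly Definition \ref{def-uniform-separation}.

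The main obstacle I anticipate is verifying that the constants $\delta^{*},\varepsilon^{*}$ produced by PS depend only on $\eta$ and on global data of $(X,f)$ (namely the expansivity constant, or the tail entropy function), so that they are genuinely uniform in the ergodic measure $\mu$. This requires tracking through the PS proof to confirm that its choices use only such global quantities; since these are inherited by every closed invariant subsystem with the same bounds (step one), the constants obtained for $(S_\mu,f|_{S_\mu})$ can be chosen independent of $\mu$, and no new dynamical input beyond \cite{PS} is needed.
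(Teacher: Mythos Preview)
Your approach is correct but differs from the paper's. The paper does not pass to the subsystem $(S_\mu,f|_{S_\mu})$; instead it works directly on $(X,f)$ and opens up the proof of \cite[Theorem~3.1]{PS} at an intermediate step (their Equation~(14)): the PS construction actually produces, for each large $n$, a family of pairwise $(\delta^{*},n,\varepsilon^{*})$-separated Borel sets $B_1,\dots,B_k\subseteq X_{n,F}$ with $\mu(B_i)>0$ and $k\geq e^{n(h_\mu(f)-\eta)}$. Since $\mu(S_\mu)=1$, each $C_i:=B_i\cap S_\mu$ is nonempty, and picking one point from each $C_i$ yields the desired $\Gamma_n\subseteq X_{n,F}\cap S_\mu$. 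This positive-measure trick bypasses entirely the uniformity-of-constants issue you flagged, because PS is applied only once, to $(X,f)$, and the constants are automatically independent of $\mu$.

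Your route is more black-box with respect to PS's theorem statement, but it shifts the work to verifying that $\delta^{*},\varepsilon^{*}$ in \cite{PS} depend only on $\eta$ and the tail-entropy profile (or expansivity constant) of the ambient system, quantities that are inherited with the same bounds by every closed invariant subset. That is true, but it does require inspecting the PS proof, so in the end both arguments reach inside \cite{PS}; the paper's version is slightly cleaner because it extracts a single structural fact (positive measure of the separating pieces) rather than tracking constants through a family of subsystems.
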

\begin{proof}
	By \cite[Equation (14)]{PS} and the definition $\Xi'$, for any $\eta >0$, there exists $\delta^{*}>0$, $\varepsilon^{*}>0$ such that for $\mu$  ergodic and
	any neighbourhood $F\subseteq \cM(X)$ of $\mu$, there exists $n^{*}$ such that for $n\geq n_{F,\mu,\eta}^{*},$  there exist disjoint sets $\{B_i\}_{i=1}^{k}$ with $k=|\Xi'|$ such that $B_i\subset X_{n,F},$ and $\mu(B_i)>0$ for any $1 \leq i\leq k,$ and for any $x_i\in B_i,$ $x_j\in B_j$ with $1\leq i\neq j\leq k,$ $x_i$ and $x_j$ are $(\delta^{*},n,\varepsilon^{*})$-separated.
	
	Let $C_i=B_i\cap S_{\mu}$ for any $1 \leq i\leq k.$ Then $C_i\subset X_{n,F}\cap S_\mu,$ and $\mu(C_i)>0$ for any $1 \leq i\leq k,$ and for any $x_i\in C_i,$ $x_j\in C_j$ with $1\leq i\neq j\leq k,$ $x_i$ and $x_j$ are $(\delta^{*},n,\varepsilon^{*})$-separated. Take one point $y_i\in C_i$ for any $1\leq i\leq k,$ and let $\Gamma_n=\{y_i\}_{i=1}^{k}.$ Then $\Gamma_n\subseteq X_{n,  F}\cap S_{\mu}$ and
	$
		|\Gamma_n|=|\Xi'|\geq e^{n(h_{\mu}(f)-\eta)}.
	$
\end{proof}

For $\eps>0$ and $n\in\N$,   two points $x$ and $y$ are $(n,  \eps)$-separated if
$d_n(x,y)>\eps.$
A subset $E$ is $(n,  \eps)$-separated if any pair of different points of $E$ are $(n,  \eps)$-separated.
For a fixed $\delta>0$,   when $n$ is large enough,   a $(\delta,  n,  \eps)$-separated set is also an $(n,  \eps)$-separated set.   So we have the following lemma.
\begin{Lem}\label{lem-n-eps}
	If $(X,  f)$ has the uniform separation property,   then for any $\eta>0$,   there exists $\widetilde{\eps}>0$ so that for $\mu$ ergodic and any neighbourhood $F\subseteq \cM(X)$ of $\mu$,   there exists $\widetilde{n}_{F,  \mu,  \eta}\in\N$ such that for any $n\geq \widetilde{n}_{F,  \mu,  \eta}$,   there is a $(n,  \widetilde{\eps})$-separated set $\Gamma_n\subseteq X_{n,  F}\cap S_{\mu}$ with
	$$|\Gamma_n|\geq e^{n(h_{\mu}(f)-\eta)}. $$
\end{Lem}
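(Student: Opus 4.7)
The plan is to apply the uniform separation property directly and observe that, for $n$ sufficiently large relative to $\delta^{*}$, a $(\delta^{*}, n, \varepsilon^{*})$-separated set is automatically $(n, \varepsilon^{*})$-separated in the Bowen sense. No new combinatorial input is needed; this is purely a matter of comparing the two separation notions.

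Concretely, I would first fix $\eta > 0$, invoke Definition \ref{def-uniform-separation}, and extract the constants $\delta^{*} > 0$, $\varepsilon^{*} > 0$ and, for each ergodic $\mu$ and each neighbourhood $F$ of $\mu$, the threshold $n^{*}_{F,\mu,\eta}$ together with the associated $(\delta^{*}, n, \varepsilon^{*})$-separated sets $\Gamma_n \subseteq X_{n,F} \cap S_\mu$ of cardinality at least $e^{n(h_\mu(f)-\eta)}$. I would then set $\widetilde{\eps} := \varepsilon^{*}$ and define
\[
\widetilde{n}_{F,\mu,\eta} := \max\left\{n^{*}_{F,\mu,\eta},\ \left\lceil 1/\delta^{*} \right\rceil\right\}.
\]

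The key step is the translation between separation types: suppose $x \neq y$ are $(\delta^{*}, n, \varepsilon^{*})$-separated, i.e.\ the set $J := \{0 \leq j \leq n-1 : d(f^j(x), f^j(y)) > \varepsilon^{*}\}$ satisfies $|J| \geq \delta^{*} n$. Once $n \geq 1/\delta^{*}$, we have $|J| \geq 1$, so there exists some index $j_0 \in [0, n-1]$ with $d(f^{j_0}(x), f^{j_0}(y)) > \varepsilon^{*}$, which gives $d_n(x,y) > \varepsilon^{*} = \widetilde{\eps}$. Hence the very same set $\Gamma_n$ supplied by the uniform separation property is $(n, \widetilde{\eps})$-separated for every $n \geq \widetilde{n}_{F,\mu,\eta}$, and it already sits inside $X_{n,F} \cap S_\mu$ with the required lower bound on its cardinality.

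There is no substantive obstacle: the argument is essentially the elementary observation that ``many separated coordinates'' implies ``at least one separated coordinate'' once $n$ exceeds $1/\delta^{*}$. The only point to be careful about is that $\widetilde{\eps}$ must be chosen to depend only on $\eta$ (through $\varepsilon^{*}$), not on $\mu$ or $F$, which is exactly what Definition \ref{def-uniform-separation} guarantees. The threshold $\widetilde{n}_{F,\mu,\eta}$, on the other hand, is allowed to depend on $F$ and $\mu$, matching the statement of the lemma.
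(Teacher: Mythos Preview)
Your proposal is correct and follows essentially the same approach as the paper: the paper's proof consists only of the sentence preceding the lemma, namely that for fixed $\delta>0$ and $n$ large enough a $(\delta,n,\eps)$-separated set is also $(n,\eps)$-separated, and you have simply spelled this out carefully. (In fact the threshold $n\geq 1/\delta^{*}$ is not even needed, since $|J|\geq \delta^{*}n>0$ and $|J|\in\mathbb{N}$ already force $|J|\geq 1$; but this is harmless.)
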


\section{Saturated sets}\label{section-3}
One main technique in the proofs of Theorem \ref{maintheorem-1'} and \ref{maintheorem-1} is using the saturated sets which can avoid a long construction proof for every object being considered.
We say a dynamical system $(X,f)$ have {saturated property}, if for any compact connected nonempty set $K \subseteq \mathcal M_f(X),$
\begin{eqnarray} \label{eq- saturated-definition}
	h_{top} (f,G_K )=\inf\{h_\mu (f):\mu\in K\},
\end{eqnarray}  
where $G_{K}
=\{x\in X: V_f(x)=K\} $ (called saturated set). 
Note that for any $x\in X$, $V_{f}(x)$ is always a nonempty compact connected subset of $\cM_f(X)$ by \cite[Proposition 3.8]{Sig}, so $G_{K}\neq\emptyset$ requires that $K$ is a nonempty compact connected set. 
The existence of saturated sets is proved by Sigmund \cite{SigSpe} for systems with uniform hyperbolicity or specification property and generalized to  non-uniformly hyperbolic systems in \cite{LST}. 
The property on entropy estimate was firstly established by Pfister and
Sullivan in \cite{PS2}, provided that the system has $g$-product property (which is weaker than specification property) and uniform separation property (which is weaker than expansiveness). Recently, in \cite[Theorem 1.4]{HTW} the authors proved that if further there is an invariant measure with full support, then
\begin{equation}\label{equ:Tian's result}
	G_{K}\cap Trans\neq\emptyset,\text{ and }h_{top}(f,G_{K}\cap Trans)=\inf\{h_{\mu}(f):\mu\in K\}.
\end{equation}

We say that a subset $A$ of $X$ is $f$-invariant (or simply invariant) if $f(A)\subset A.$ When $f$ is a homeomorphism from $X$ onto $X$, we say that a subset $A$ of $X$ is $f$-invariant if $f(A)= A.$ If $A$ is a closed $f$-invariant subset of $X,$ then $(A,f|_A)$ also is a dynamical system. We will call it a subsystem of $(X,f).$
Now, we state a theorem which study topological entropy of saturated sets for systems which have a sequence of nondecreasing invariant compact subsets such that every subsystem has specificaton property.
\begin{maintheorem}\label{maintheorem-2}
	Suppose that $(X,f)$ is a dynamical system with a sequence of nondecreasing $f$-invariant compact subsets $\{X_{n} \subseteq X:n \in \mathbb{N^{+}} \}$ such that $\overline{\bigcup_{n\geq 1}X_{n}}=X,$ and $({X_{n}},f|_{X_{n}})$ has specification property for any $n \in \mathbb{N^{+}}.$ 
	\begin{description}
		\item[(1)] If $(X,f)$ has uniform separation property, then for any non-empty compact connected subset $K\subseteq \{\mu \in \mathcal{M}_{f}(X):\mu(\bigcup_{n\geq 1}X_{n})=1\}$ and any non-empty open set $U\subseteq X,$ we have 
		$$G_{K}\cap U\cap Trans\neq\emptyset,\text{ and }h_{top}(f,G_{K}\cap U\cap Trans\})=\inf\{h_{\mu}(f):\mu\in K\}.$$
		\item[(2)]  If $(X_{n_0},f|_{X_{n_0}})$ has uniform separation property for some ${n_0}\in\mathbb{N^{+}}$, then for any non-empty compact connected subset $K\subseteq \cM_{f|_{X_{n_0}}}(X_{n_0})$ and any non-empty open set $U\subseteq X,$ we have 
		$$G_{K}\cap U\cap Trans\neq\emptyset,\text{ and }h_{top}(f,G_{K}\cap U\cap Trans\})=\inf\{h_{\mu}(f):\mu\in K\}.$$
	\end{description} 
\end{maintheorem}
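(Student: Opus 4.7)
The plan is to establish the upper and lower bounds for $h_{top}(f, G_K \cap U \cap Trans)$ separately, with nonemptiness emerging as a byproduct of the lower-bound construction. For the upper bound $h_{top}(f, G_K \cap U \cap Trans) \leq \inf\{h_\mu(f) : \mu \in K\}$ I would argue via Brin--Katok: for any $\mu \in K$ and any $x \in G_K$ the defining property $V_f(x) = K$ gives a subsequence $\mathcal{E}_{n_k}(x) \to \mu$ in the weak$^\ast$ topology, and a standard semicontinuity argument (as in \cite{PS}) yields $\underline{h}_\mu(f, x) \leq h_\mu(f)$ for every $\mu \in K$; combining this with Lemma~\ref{lemma-aa} applied to compact subsets exhausting $G_K \cap U \cap Trans$ delivers the upper bound.

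The main content is the lower bound together with nonemptiness, and I would prove it by iterative gluing via specification. First I reduce $K$ to a sequence of ergodic approximants: in part~(1) each $\mu \in K$ satisfies $\mu(\bigcup_n X_n) = 1$, so its normalised restrictions to $X_n$ converge to $\mu$ in $\mathcal{M}_f(X)$, and since each $(X_n, f|_{X_n})$ has specification, Proposition~\ref{proposition of entropy-dense property} gives ergodic entropy-dense approximations on $X_n$; in part~(2) this is immediate as $K \subseteq \mathcal{M}_{f|_{X_{n_0}}}(X_{n_0})$. Using compactness and connectedness of $K$, fix a countable dense sequence $\{\mu_j\}_{j \geq 1} \subset K$ with $d(\mu_j, \mu_{j+1}) \to 0$, and for each $j$ choose a convex combination $\sum_i \lambda_{j,i} \nu_{j,i}$ of ergodic measures supported on some $X_{n_j}$ that is close to $\mu_j$ and satisfies $\sum_i \lambda_{j,i} h_{\nu_{j,i}}(f) \geq h_{\mu_j}(f) - \eta_j$ with $\eta_j \downarrow 0$.

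I then build a Moran-like Cantor set of orbits by stagewise specification. At stage $j$, for each $\nu_{j,i}$ I invoke the uniform separation property of $(X, f)$ (in part~(1)) or of $(X_{n_0}, f|_{X_{n_0}})$ (in part~(2)) together with Lemma~\ref{lem-n-eps} to produce a $(n_{j,i}, \widetilde\varepsilon)$-separated set of cardinality at least $\exp(n_{j,i}(h_{\nu_{j,i}}(f) - \eta_j))$ inside a prescribed weak$^\ast$-neighbourhood of $\nu_{j,i}$ and inside $S_{\nu_{j,i}} \subseteq X_{n_j}$. Using the specification property of $(X_{n_j}, f|_{X_{n_j}})$ with gap length $\leq K_{\varepsilon_j}$, I glue these separated pieces in proportions $\lambda_{j,i}$ and concatenate across stages $j = 1, 2, \dots$. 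Two further kinds of blocks are interleaved: an initial block shadowing a chosen orbit piece in $U$, and periodic short blocks shadowing a fixed countable dense set $\{y_k\} \subseteq X$ (achievable by $\overline{\bigcup_n X_n} = X$ combined with specification on the smallest $X_n$ containing each $y_k$), to force the eventual shadowing points into $U \cap Trans$. A standard Moran-cover argument then shows that the Cantor set of shadowing points has Bowen topological entropy at least $\inf_{\mu \in K} h_\mu(f) - O(\eta_j) \to \inf_{\mu \in K} h_\mu(f)$, and each such point lies in $G_K \cap U \cap Trans$ by construction of the block schedule.

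The hardest part will be the bookkeeping that reconciles three competing demands on the block schedule: (a) making the empirical measures visit each $\mu_j$ in turn so that $V_f(x) = K$ exactly (requires stage-$j$ tracking blocks long enough to dominate the partial averages from earlier stages), (b) enforcing transitivity in $X$ even though in part~(1) the specification constants $K_{\varepsilon_j}$ on $X_{n_j}$ may degenerate as $n_j \to \infty$ (forcing an adaptive coordinated choice of $\varepsilon_j \downarrow 0$ and $n_j \uparrow \infty$), and (c) preserving the entropy count, which requires the gap blocks and transitivity blocks to be asymptotically negligible in length compared to the tracking blocks. In part~(2) this tension largely disappears because all specification and uniform-separation constants can be taken from the single subsystem $X_{n_0}$, so only the transitivity blocks (which still use specification on larger $X_n$) require care.
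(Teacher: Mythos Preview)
Your lower-bound construction is essentially the paper's: approximate each element of a sequence $\{\alpha_j\}\subset K$ (chosen via Lemma~\ref{lemma-A}) by ergodic measures on some $X_{l_j}$ using Lemma~\ref{lemma-MM} and entropy-density, extract separated sets via uniform separation, and glue by specification while interleaving a $U$-block and density blocks to force $U\cap Trans$. The paper simplifies one step you over-engineer: it does \emph{not} pass through convex combinations $\sum_i\lambda_{j,i}\nu_{j,i}$ of ergodics, but goes directly to a single ergodic $\gamma_j$ close to $\alpha_j$ with nearly the same entropy (Proposition~\ref{proposition of entropy-dense property}); this removes a layer of bookkeeping with no loss.

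Your upper-bound argument, however, is not right as written. You invoke $\underline{h}_\mu(f,x)$ for $\mu\in K$ and $x\in G_K$ and then appeal to Lemma~\ref{lemma-aa}, but Lemma~\ref{lemma-aa} concerns $\underline{h}_\nu(f)$ for measures $\nu$ with $\nu(G_K)=1$, not for $\mu\in K$; moreover $\mu$ need not even charge a neighbourhood of $x$, so $\underline{h}_\mu(f,x)$ is not meaningful here. The paper instead applies Bowen's Lemma~\ref{lem-Bowen} directly: every $x\in G_K$ has some $\mu\in K\subset V_f(x)$ with $h_\mu(f)\leq\inf_{\nu\in K}h_\nu(f)+\varepsilon$, so $G_K\subseteq QR(\inf_{\nu\in K}h_\nu(f)+\varepsilon)$, giving the bound immediately. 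Replace your Brin--Katok paragraph with this one-line appeal to Lemma~\ref{lem-Bowen}.
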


If we replace specification property by shadowing property and transitivity in Theorem \ref{maintheorem-2}, we have the same results as follows.
\begin{maintheorem}\label{maintheorem-3}
	Suppose that $f$ is a homeomorphism from $X$ onto $X$, $(X,f)$ has a sequence of nondecreasing $f$-invariant compact subsets $\{X_{n} \subseteq X:n \in \mathbb{N^{+}} \}$ such that $\overline{\bigcup_{n\geq 1}X_{n}}=X$, $\mathrm{Per}(f|_{X_{1}})$ $\neq \emptyset,$ $({X_{n}},f|_{X_{n}})$ has shadowing property and is transitive for any $n \in \mathbb{N^{+}}.$ 
	\begin{description}
	\item[(1)] If $(X,f)$ has uniform separation property, then for any non-empty compact connected subset $K\subseteq \{\mu \in \mathcal{M}_{f}(X):\mu(\bigcup_{n\geq 1}X_{n})=1\}$ and any non-empty open set $U\subseteq X,$ we have 
	$$G_{K}\cap U\cap Trans\neq\emptyset,\text{ and }h_{top}(f,G_{K}\cap U\cap Trans\})=\inf\{h_{\mu}(f):\mu\in K\}.$$
	\item[(2)]  If $(X_{n_0},f|_{X_{n_0}})$ has uniform separation property for some ${n_0}\in\mathbb{N^{+}}$, then for any non-empty compact connected subset $K\subseteq \cM_{f|_{X_{n_0}}}(X_{n_0})$ and any non-empty open set $U\subseteq X,$ we have 
	$$G_{K}\cap U\cap Trans\neq\emptyset,\text{ and }h_{top}(f,G_{K}\cap U\cap Trans\})=\inf\{h_{\mu}(f):\mu\in K\}.$$
\end{description} 
\end{maintheorem}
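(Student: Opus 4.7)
The plan is to mirror the proof of Theorem \ref{maintheorem-2}, replacing the direct use of specification by the combination of shadowing property and transitivity on each $X_n$. The key observation is that for each $n$, although $(X_n, f|_{X_n})$ need not satisfy specification, the spectral decomposition for transitive shadowing systems yields a period $k_n \geq 1$ and a cyclic decomposition $X_n = \bigsqcup_{i=0}^{k_n-1} X_n^{(i)}$ with $f(X_n^{(i)}) = X_n^{((i+1) \bmod k_n)}$, such that each $(X_n^{(i)}, f^{k_n}|_{X_n^{(i)}})$ has shadowing and is mixing, hence by Proposition \ref{prop-specif} has specification. This restores the Sigmund--Pfister--Sullivan concatenation machinery at the level of $f^{k_n}$ on each component; the return to $f$ is handled through the entropy-scaling identity in Proposition \ref{prop-AA}(2). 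The hypothesis $\Per(f|_{X_1}) \neq \emptyset$ furnishes a distinguished periodic orbit that will serve as a universal anchor when threading connecting pseudo-orbits across the different cyclic components of the various $X_n$.

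The construction proceeds in three stages. First, I fix a dense sequence $\{\mu_\ell\}_{\ell \geq 1}$ in $K$ and a countable base $\{U_j\}_{j \geq 1}$ for the topology of $X$ with $U_1=U$. Using the ergodic decomposition together with $\mu_\ell(\bigcup_n X_n)=1$ in case (1), or $\mu_\ell\in\mathcal{M}_{f|_{X_{n_0}}}(X_{n_0})$ in case (2), I produce an ergodic measure $\nu_\ell$ with $h_{\nu_\ell}(f)\geq h_{\mu_\ell}(f)$ and $\supp(\nu_\ell)\subseteq X_{n_\ell}$ for some $n_\ell$; in case (1) the existence of $n_\ell$ uses ergodicity of $\nu_\ell$ combined with the $f$-invariance of each $X_n$, which forces $\nu_\ell(X_n)\in\{0,1\}$ and hence $\nu_\ell(X_n)=1$ for all sufficiently large $n$. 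Second, invoking the uniform separation property on $(X,f)$ in case (1), or on $(X_{n_0},f|_{X_{n_0}})$ in case (2), Lemma \ref{lem-n-eps} supplies for each large $N_\ell$ a $(N_\ell,\widetilde{\varepsilon})$-separated family $\Gamma_\ell\subseteq X_{n_\ell}$ of cardinality at least $\exp(N_\ell(h_{\nu_\ell}(f)-1/\ell))$ whose empirical measures lie close to $\nu_\ell$ in the weak-$*$ topology.

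Third, for each choice $(y_1,y_2,\ldots)\in\prod_\ell\Gamma_\ell$ I concatenate the orbit segments of length $N_\ell$ starting at $y_\ell$ by inserting short connecting pseudo-orbit segments produced within $X_{n_{\ell+1}}$ via transitivity. The connectors are arranged to hit the correct cyclic component needed to attach to the next good block (using the periodic anchor in $X_1$), to visit each $U_j$ with $j\leq\ell$ (using $\overline{\bigcup_n X_n}=X$ together with transitivity and shadowing in some $X_m$ containing the detour endpoints), and to have total length $o(N_\ell)$ so as not to perturb Birkhoff averages. Placing the first good block inside $U$, applying the shadowing property of $(X_{n_{\ell+1}},f|_{X_{n_{\ell+1}}})$ at stage $\ell$ and taking a diagonal limit yields a point $x=x(y_1,y_2,\ldots)$ whose orbit $\widetilde{\varepsilon}$-shadows the pseudo-orbit for all time, so that $V_f(x)=K$, $x\in U$, and $x\in Trans$.

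Finally, distinct choices in $\prod_\ell\Gamma_\ell$ produce distinct $(N_\ell,\widetilde{\varepsilon}/2)$-separated shadowing points at each stage, and the uniform product mass pushes forward to a Borel probability $\sigma$ concentrated on $G_K\cap U\cap Trans$ with $\underline{h}_\sigma(f)\geq\inf_{\mu\in K}h_\mu(f)$; Lemma \ref{lemma-aa} then delivers the lower bound. The matching upper bound is immediate from Lemma \ref{lemma-aa} and a Brin--Katok comparison: for any Borel $\sigma$ with $\sigma(G_K)=1$ and $\sigma$-a.e.\ $x$, some subsequence $\mathcal{E}_{n_k}(x)\to\nu$ for each $\nu\in K$, giving $\underline{h}_\sigma(f,x)\leq h_\nu(f)$ and hence $\underline{h}_\sigma(f)\leq\inf_{\nu\in K}h_\nu(f)$. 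The main obstacle I anticipate is coordinating the connecting segments with the cyclic components $X_n^{(i)}$: the periods $k_{n_\ell}$ may be unbounded as $\ell$ grows, so the connecting lengths must be kept $o(N_\ell)$ while still realizing the required residue classes modulo $k_{n_\ell}$ and the prescribed open-set visits; the nondecreasing structure of $\{X_n\}$ together with the periodic anchor in $X_1$ provide just enough rigidity to carry this through.
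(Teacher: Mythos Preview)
Your outline has the right architecture, but there is one genuine gap and one missed simplification that together make the argument, as written, incomplete.

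\textbf{The gap: approximation by ergodic measures.} In your first stage you pick a dense sequence $\{\mu_\ell\}$ in $K$ and then, via the ergodic decomposition, an ergodic $\nu_\ell$ with $h_{\nu_\ell}(f)\geq h_{\mu_\ell}(f)$ and $\supp(\nu_\ell)\subseteq X_{n_\ell}$. That step does not control the weak-$*$ distance $d(\nu_\ell,\mu_\ell)$: an ergodic component of large entropy can sit anywhere in the decomposition. Since in stage three the shadowing point $x$ is built so that the limit set of $\{\mathcal E_n(x)\}$ coincides with $\overline{\{\nu_\ell\}}$, you would get $V_f(x)=\overline{\{\nu_\ell\}}$, which need not equal $K$. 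The paper fixes this by first restricting $\mu_\ell$ to $X_n$ via Lemma~\ref{lemma-MM} (which keeps both the measure and its entropy close) and then invoking the entropy-dense property (Proposition~\ref{proposition of entropy-dense property}) inside $X_n$, producing ergodic $\gamma_\ell\in\mathcal M^e_{f|_{X_{n_\ell}}}(X_{n_\ell})$ with $d(\gamma_\ell,\mu_\ell)<\varepsilon_\ell$ \emph{and} $h_{\gamma_\ell}(f)\geq h_{\mu_\ell}(f)-\eta$. With this choice one has $\overline{\{\gamma_\ell:\ell>n\}}=K$ for every $n$, which is exactly what the construction needs.

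\textbf{The missed simplification: the cyclic periods are constant.} You correctly identify the spectral decomposition of each $(X_n,f|_{X_n})$ into cyclic pieces with period $k_n$, and you flag the coordination of varying $k_{n_\ell}$ as the main obstacle. In fact this obstacle evaporates. The periodic point $p_0\in X_1\subseteq X_n$ has some period $Q$, and Lemma~\ref{AU} forces each $k_n$ to divide $Q$; in particular the $k_n$ take only finitely many values, so by pigeonhole one may pass to a subsequence with $k_n\equiv k$ for a single $k\mid Q$. Anchoring $p_0\in D_0^n$ and applying Lemma~\ref{AY} then gives the nesting $D_0^1\subseteq D_0^2\subseteq\cdots$, so that $\{D_0^n\}$ is a nondecreasing family of compact $f^k$-invariant sets on which $f^k$ has specification. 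At this point the problem reduces \emph{exactly} to Theorem~\ref{maintheorem-2} applied to $(Y_0,f^k)$ with $Y_0=\overline{\bigcup_n D_0^n}$; the passage back to $f$ is handled by the homeomorphism $h_*$ between $\bigcup_n\mathcal M_{f|_{X_n}}(X_n)$ and $\bigcup_n\mathcal M_{f^k|_{D_0^n}}(D_0^n)$, the entropy identity $h_\nu(f^k)=k\,h_{(h_*)^{-1}\nu}(f)$, and Proposition~\ref{prop-AA}. So rather than threading connecting orbits through components with varying residue classes, the paper absorbs the entire cyclic complication into a single power $f^k$ and invokes the specification theorem already proved.
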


\subsection{ Proof of Theorem \ref{maintheorem-2}(1)} 
Before proof, we introduce some basic facts and lemmas.
\subsubsection{Some lemmas}
If $r,s\in\mathbb{N},r\leq s$, we set $[r,s]:=\{j\in\mathbb{N}:\ r\leq j\leq s\}$, and the cardinality of a finite set $\Lambda$ is denoted by $|\Lambda|$. We set
$$
\langle \varphi,\mu \rangle\ :=\ \int_X\varphi d\mu.
$$
There exists a countable and separating set of continuous functions $\{\varphi_1,\varphi_2,\cdots\}$ with $0\leq \varphi_k(x)\leq 1$, and such that
{ $$
	d(\mu,\nu)\ :=\ \sum_{k\geq 1}2^{-k}\mid\langle \varphi_k,\mu\rangle-\langle \varphi_k,\nu \rangle\mid
	$$}
defines a metric for the weak*-topology on $ \mathcal M_f(X)$. We refer to \cite{PS2} and use the metric on $X$ as following defined by Pfister and Sullivan.
$$
d(x,y) := d(\delta_x,\delta_y),
$$
which is equivalent to the original metric on $X$. Readers will find the benefits of using this metric in our proof later.
Denote an open ball in $\cM(X)$ by
$$
\mathcal{B}(\nu, \zeta):=\{\mu \in \cM(X): d(\nu, \mu)< \zeta\},
$$
an open ball in $X$ by
$$
B(x, \eps):=\{y\in X: d(y,x) < \eps\}.
$$

\begin{Lem}\label{measure distance}
For any $\varepsilon > 0,\delta >0$, and any two sequences $\{x_i\}_{i=0}^{n-1},\{y_i\}_{i=0}^{n-1}$ of $X$, if $d(x_i,y_i)<\varepsilon$ holds for any $i\in [0,n-1]$, then for any $J\subseteq \{0,1,\cdots,n-1\}$ with $\frac{n-|J|}{n}<\delta$, one has:
\begin{description}
\item[(a)] $d(\frac{1}{n}\sum_{i=0}^{n-1}\delta_{x_i},\frac{1}{n}\sum_{i=0}^{n-1}\delta_{y_i})<\varepsilon.$
\item[(b)] $d(\frac{1}{n}\sum_{i=0}^{n-1}\delta_{x_i},\frac{1}{|J|}\sum_{i\in J}\delta_{y_i})<\varepsilon+2\delta.$
\end{description}
\end{Lem}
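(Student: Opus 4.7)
The plan is to exploit the very specific form of the metric defined just before the lemma, namely $d(\mu,\nu)=\sum_{k\geq 1}2^{-k}|\langle\varphi_k,\mu\rangle-\langle\varphi_k,\nu\rangle|$ with $0\le\varphi_k\le 1$, together with the convention $d(x,y)=d(\delta_x,\delta_y)=\sum_{k\geq 1}2^{-k}|\varphi_k(x)-\varphi_k(y)|$. Both parts will follow from straightforward bookkeeping; no deeper dynamical input is needed.

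For part (a), I would expand
\[
d\!\left(\tfrac{1}{n}\sum_{i=0}^{n-1}\delta_{x_i},\tfrac{1}{n}\sum_{i=0}^{n-1}\delta_{y_i}\right)=\sum_{k\geq 1}2^{-k}\Bigl|\tfrac{1}{n}\sum_{i=0}^{n-1}\bigl(\varphi_k(x_i)-\varphi_k(y_i)\bigr)\Bigr|,
\]
then move the absolute value inside the inner sum, swap the order of summation, and recognize $\sum_{k\geq 1}2^{-k}|\varphi_k(x_i)-\varphi_k(y_i)|=d(x_i,y_i)<\varepsilon$. Averaging over $i$ gives the bound $\varepsilon$.

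For part (b), I would insert the intermediate measure $\frac{1}{n}\sum_{i=0}^{n-1}\delta_{y_i}$ and apply the triangle inequality. Part (a) handles the first piece by $\varepsilon$. For the remaining term, writing $m=|J|$ and $r=n-m<\delta n$, I would estimate for each fixed $k$,
\[
\Bigl|\tfrac{1}{n}\sum_{i=0}^{n-1}\varphi_k(y_i)-\tfrac{1}{m}\sum_{i\in J}\varphi_k(y_i)\Bigr|\le\Bigl(\tfrac{1}{m}-\tfrac{1}{n}\Bigr)\sum_{i\in J}\varphi_k(y_i)+\tfrac{1}{n}\sum_{i\notin J}\varphi_k(y_i)\le\tfrac{r}{n}+\tfrac{r}{n}<2\delta,
\]
using $0\le\varphi_k\le 1$. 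Multiplying by $2^{-k}$ and summing (using $\sum_{k\geq 1}2^{-k}=1$) gives a bound of $2\delta$ on the second piece, so the total is strictly less than $\varepsilon+2\delta$.

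The hard part is essentially nonexistent here; the only mild subtlety is making sure the strict inequality is preserved when passing from $r/n<\delta$ to the final bound, which is automatic because the per-$k$ estimate is already strict and the geometric series converges to $1$. The entire argument is just the $L^1$-type convexity of the metric under averaging, combined with the obvious observation that replacing an average over $[0,n-1]$ by an average over a nearly full subset $J$ perturbs each coordinate by at most $2r/n$.
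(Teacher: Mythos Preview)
Your argument is correct and is exactly the natural verification the paper has in mind; the paper itself does not prove this lemma but simply states that it ``is easy to be verified,'' so there is nothing further to compare.
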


Lemma \ref{measure distance} is easy to be verified and shows us that if any two orbit of $x$ and $y$ in finite steps are close in the most time, then the two empirical measures induced by $x,y$ are also close.

\begin{Lem}\label{lemma-separation}
	For any $\eps_1 > 3\eps_2 >0$, and any integers $n,m\in\mathbb{N}$, if $x_1$ and $x_2$ are $(n,  \eps_1)$-separated, $f^m(y_1)\in B_n(x_1,\eps_2),$ $f^m(y_2)\in B_n(x_2,\eps_2),$ then $y_1$ and $y_2$ are $(n,  \eps_2)$-separated.
\end{Lem}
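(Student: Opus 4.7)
The lemma is essentially a one-line triangle inequality on the Bowen metric, so the plan is as follows.

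First, I will unpack the separation hypothesis: $d_n(x_1, x_2) > \eps_1$ means there exists an index $j^* \in \{0, 1, \ldots, n-1\}$ with $d(f^{j^*}(x_1), f^{j^*}(x_2)) > \eps_1$. Next, I will use the two shadowing hypotheses $f^m(y_k) \in B_n(x_k, \eps_2)$, which give $d(f^{m+i}(y_k), f^i(x_k)) < \eps_2$ for every $i \in \{0, 1, \ldots, n-1\}$ and every $k \in \{1, 2\}$.

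Applying these estimates at the single index $i = j^*$ and using the triangle inequality, I obtain
$$d(f^{m+j^*}(y_1), f^{m+j^*}(y_2)) \geq d(f^{j^*}(x_1), f^{j^*}(x_2)) - 2\eps_2 > \eps_1 - 2\eps_2 > \eps_2,$$
where the last inequality is exactly the hypothesis $\eps_1 > 3\eps_2$. This exhibits one coordinate at which the orbits of $y_1$ and $y_2$ are more than $\eps_2$ apart, which is the desired $(n, \eps_2)$-separation, read along the orbit segment of length $n$ starting at time $m$.

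The main obstacle is essentially nonexistent; the only point requiring care is the index bookkeeping, since the witnessing coordinate for the separation of $(x_1, x_2)$ is $j^*$ while for $(y_1, y_2)$ it is the shifted coordinate $m + j^*$. Once this shift is correctly recorded, the proof terminates in the single display above, with no dynamics or compactness needed beyond the triangle inequality and the quantitative gap $\eps_1 - 2\eps_2 > \eps_2$ supplied by the hypothesis $\eps_1 > 3\eps_2$.
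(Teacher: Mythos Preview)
Your proposal is correct and is exactly the intended argument; the paper itself does not spell out a proof but only states that the lemma ``is easy to be verified,'' and the triangle-inequality computation you give is the natural verification. Your remark about the index shift is also apt: the separating coordinate for $y_1,y_2$ is $m+j^*$, so what is literally proved is $(m+n,\eps_2)$-separation (equivalently, separation along the length-$n$ segment starting at time $m$), which is precisely how the lemma is used in the paper.
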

Lemma \ref{lemma-separation} is easy to be verified.

\begin{Lem}\label{lemma-MM}
	Suppose that $(X,f)$ is a dynamical system with a sequence of nondecreasing $f$-invariant compact subsets $\{X_{n} \subseteq X:n \in \mathbb{N^{+}} \}$ such that $\overline{\bigcup_{n\geq 1}X_{n}}=X.$ 
	For any $\mu \in \mu \in \mathcal{M}_{f}(X)$ with $\mu(\bigcup_{n\geq 1}X_{n})=1,$ let $\mu_{n}:= \frac{\mu}{\mu(X_{n})}$, then we have $\mu_{n} \in  \mathcal{M}_{f|_{X_n}}(X_n)$ and  
	$\lim\limits_{n\to\infty}\mu_{n}=\mu,$  $\lim\limits_{n\to\infty}h_{\mu_{n}}(f)=h_{\mu}(f).$ 
\end{Lem}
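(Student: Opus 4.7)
The claim splits into three parts: (i) $\mu_n$ is a well-defined $f|_{X_n}$-invariant probability measure on $X_n$, (ii) weak-$*$ convergence $\mu_n\to\mu$, and (iii) entropy convergence $h_{\mu_n}(f)\to h_{\mu}(f)$. I would read $\mu_n$ as the normalized restriction $\mu_n(A)=\mu(A\cap X_n)/\mu(X_n)$; since $\{X_n\}$ is nondecreasing with $\mu(\bigcup_n X_n)=1$, we have $\mu(X_n)\to 1$, so $\mu_n$ is well-defined for all $n$ large enough.

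The $X_n$ are only forward-invariant ($f(X_n)\subset X_n$), so $X_n\subset f^{-1}(X_n)$ is a (possibly strict) inclusion. The key observation is that $f$-invariance of $\mu$ forces $\mu(f^{-1}(X_n)\setminus X_n)=\mu(f^{-1}(X_n))-\mu(X_n)=0$. Hence for any Borel $A\subset X_n$ one has $\mu(f^{-1}(A)\cap X_n^c)\leq \mu(f^{-1}(X_n)\setminus X_n)=0$, and the $f|_{X_n}$-invariance of $\mu_n$ then follows by a one-line computation. Weak-$*$ convergence $\mu_n\to\mu$ is immediate from dominated convergence, since $\mathbf{1}_{X_n}\varphi\to\varphi$ pointwise $\mu$-a.e.\ for any $\varphi\in C(X)$ and $\mu(X_n)\to 1$.

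The heart of the proof is the entropy assertion. For each $n$ with $\mu(X_n^c)>0$, set $\nu_n:=\mu|_{X_n^c}/\mu(X_n^c)$. A parallel argument (again using $f(X_n)\subset X_n$) shows $\nu_n$ is $f$-invariant, so we get the convex decomposition
\[
  \mu \;=\; \mu(X_n)\,\mu_n \;+\; \mu(X_n^c)\,\nu_n
\]
of $f$-invariant probability measures. For any finite measurable partition $\xi$ of $X$, the affinity of the Kolmogorov--Sinai entropy yields
\[
  h_{\mu}(f,\xi) \;=\; \mu(X_n)\,h_{\mu_n}(f,\xi)\;+\;\mu(X_n^c)\,h_{\nu_n}(f,\xi),
\]
and the last summand is bounded by $\mu(X_n^c)\log|\xi|\to 0$. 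Thus $h_{\mu_n}(f,\xi)\to h_{\mu}(f,\xi)$ for each fixed $\xi$, so after taking the supremum over $\xi$ we obtain $\liminf_n h_{\mu_n}(f)\geq h_{\mu}(f)$. For the reverse inequality when $h_{\mu}(f)<\infty$, applying affinity at the level of total entropy gives $\mu(X_n)\,h_{\mu_n}(f)\leq h_{\mu}(f)$, whence $\limsup_n h_{\mu_n}(f)\leq h_{\mu}(f)$; when $h_{\mu}(f)=\infty$ the lower bound alone already forces $h_{\mu_n}(f)\to\infty$.

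The main obstacle I anticipate is the entropy step rather than the invariance or the weak-$*$ step. Without any regularity hypothesis such as upper semicontinuity of the entropy map, one cannot pass weak-$*$ convergence $\mu_n\to\mu$ directly to convergence of total entropies. The argument goes through only because the tail mass $\mu(X_n^c)$ vanishes while the per-partition entropy $h_{\nu_n}(f,\xi)$ is uniformly bounded by $\log|\xi|$; this uniform bound is precisely what legitimizes the sup--liminf interchange at the end.
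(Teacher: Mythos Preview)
Your proof is correct and follows essentially the same route as the paper: the convex decomposition $\mu=\mu(X_n)\mu_n+\mu(X_n^c)\nu_n$ into $f$-invariant measures together with affinity of the entropy. Your treatment is in fact more complete than the paper's, which simply writes down the affinity identity $h_\mu(f)=\mu(X_n)h_{\mu_n}(f)+(1-\mu(X_n))h_{\nu_n}(f)$ and concludes without justifying why $(1-\mu(X_n))h_{\nu_n}(f)\to 0$; your partition-level argument with the uniform bound $h_{\nu_n}(f,\xi)\leq\log|\xi|$ fills exactly this gap and works even when $h_{top}(f)=\infty$.
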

\begin{proof}
	For any continuous function $\varphi,$ we have
	\begin{equation*}
	\begin{split}
	& |\int_{X}\varphi d\mu-\int_{X}\varphi d\mu_{n}|\\
	= & |\int_{X_{n}^{c}}\varphi d\mu+\int_{X_{n}}\varphi d\mu-\int_{X_{n}}\varphi d\mu_{n}|\\
	\le & ||\varphi ||\mu(X_{n}^{c})+|\int_{X_{n}}\varphi d\mu-\frac{1}{\mu(X_{n})}\int_{X_{n}}\varphi d\mu|\\
	\le & (1-\mu(X_{n}))||\varphi ||+(\frac{1}{\mu(X_{n})}-1)||\varphi ||\\
	= & (\frac{1}{\mu(X_{n})}-\mu(X_{n}))||\varphi ||.
	\end{split}
	\end{equation*}
	Since $\lim\limits_{n \to \infty}\mu(X_{n})=1,$ then $\lim\limits_{n\to\infty}\mu_{n}=\mu$ by the weak*-topology of $\mathcal{M}(X)$. Let $\nu_{n}:= \frac{\mu}{\mu(X_{n}^c)}$, then $\mu=\mu(X_{n})\mu_n+(1-\mu(X_{n}))\nu_{n}.$ So $\lim\limits_{n\to\infty}h_{\mu_{n}}(f)=h_{\mu}(f)$ by $h_{\mu}(f)=\mu(X_{n})h_{\mu_n}(f)+(1-\mu(X_{n}))h_{\nu_{n}}(f).$  
\end{proof}

The following lemma is from Bowen \cite{Bowen1973}.
\begin{Lem}\label{lem-Bowen}
	Suppose that $(X,f)$ is a dynamical system.   Set
	$$QR(t)=\{x\in X:\text{ there is }\mu\in V_{f}(x)\text{ such that }h_{\mu}(f)\leq t\},$$
	then
	$h_{top}(f,  QR(t))\leq t.  $
\end{Lem}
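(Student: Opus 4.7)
The plan is to adapt Bowen's original covering argument from \cite{Bowen1973}. The goal is to show that for every $s > t$ the Bowen $s$-outer measure $\mathcal{M}^s(QR(t)) = 0$, which by the definition of $h_{top}(f, \cdot)$ gives $h_{top}(f, QR(t)) \leq s$; letting $s \downarrow t$ then concludes the lemma.

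Fix $s > t$ and set $\eta = (s - t)/4$. For each $x \in QR(t)$, pick $\mu = \mu_x \in V_f(x)$ with $h_\mu(f) \leq t$, so that there is a subsequence $n_k(x) \to \infty$ with $\mathcal{E}_{n_k(x)}(x) \to \mu$ in the weak-$*$ topology (note that $V_f(x) \subseteq \mathcal{M}_f(X)$, so $h_\mu(f)$ is the usual metric entropy). Using the variational characterization of $h_\mu(f)$, I would choose a finite measurable partition $\xi = \xi_x$ with $\mathrm{diam}\,\xi < \varepsilon/2$, with $\mu(\partial \xi) = 0$, and with $h_\mu(f, \xi) \leq t + \eta$.

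The heart of the argument is a counting estimate. By the Shannon-McMillan-Breiman theorem applied to $(\mu, \xi)$, for all sufficiently large $n$ the number of atoms of $\bigvee_{i = 0}^{n-1} f^{-i} \xi$ of $\mu$-measure at least $e^{-n(t + 2\eta)}$ is at most $e^{n(t + 3\eta)}$. The weak-$*$ convergence $\mathcal{E}_{n_k(x)}(x) \to \mu$, combined with the $\mu$-null boundary condition on $\xi$, forces the symbolic $\xi$-itinerary of the orbit segment $x, f(x), \ldots, f^{n_k(x) - 1}(x)$ to correspond to one of these large atoms, since otherwise the empirical visit frequencies to the atoms of $\xi$ would depart too far from the predictions of $\mu$. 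Each such atom lies inside a single Bowen $(n, \varepsilon)$-ball, yielding a cover of the relevant part of $QR(t)$ at scale $n$ by at most $e^{n(t + 3\eta)}$ Bowen balls.

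To deal with the $x$-dependence of $(\mu_x, \xi_x)$, I would decompose $QR(t)$ as a countable union of pieces indexed by a weak-$*$-dense countable family of invariant measures together with a countable family of admissible partitions taken from a Lusin-type construction. On each piece the above cardinality bound yields $\mathcal{M}^s_{N, \varepsilon}(\text{piece}) \leq \sum_{n \geq N} e^{n(t + 3\eta - s)}$, which is summable and tends to $0$ as $N \to \infty$. The main obstacle I expect is rigorously translating weak-$*$ empirical convergence into the combinatorial statement that $x$ lies in a \emph{large} atom of $\bigvee_{i=0}^{n_k-1} f^{-i}\xi$; this relies on the $\mu$-null boundary so that weak-$*$ convergence may be tested against indicators of $\xi$-atoms, and on organizing the countable covering so that every $x \in QR(t)$ is handled by at least one piece with uniform control of the exceptional sets produced by Shannon-McMillan-Breiman.
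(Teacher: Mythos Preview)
The paper does not give a proof of this lemma; it simply attributes it to Bowen \cite{Bowen1973}, so any comparison is with Bowen's original argument.

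Your outline follows Bowen's strategy, but the step you yourself flag as the ``main obstacle'' is a genuine gap, and the mechanism you suggest for it does not work. Weak-$*$ closeness of $\mathcal{E}_{n_k}(x)$ to $\mu$ does \emph{not} force the $\xi^{(n_k)}$-atom containing $x$ to have large $\mu$-measure: $x$ need not lie in $S_\mu$ at all, so its atom may well have $\mu$-measure zero, and even when $\mu$ is Bernoulli with respect to $\xi$ an itinerary with the correct one-step visit frequencies has $\mu$-measure roughly $e^{-nH_\mu(\xi)}$, where $H_\mu(\xi)$ can be much larger than $h_\mu(f,\xi)\le t+\eta$. Shannon--McMillan--Breiman only concerns $\mu$-typical points, which $x$ is not assumed to be.

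Bowen's actual argument bypasses atom measures and counts itineraries combinatorially. One first fixes $m$ with $\tfrac1m H_\mu\big(\xi^{(m)}\big)<h_\mu(f,\xi)+\eta$, then (using $\mu(\partial\xi^{(m)})=0$) chooses $\delta>0$ so small that $d(\mathcal{E}_n(y),\mu)<\delta$ forces the empirical frequencies of visits to atoms of the \emph{refined} partition $\xi^{(m)}$ along $y,f(y),\dots,f^{n-1}(y)$ to be close to their $\mu$-values. A Stirling-type count then bounds the number of length-$n$ $\xi$-itineraries compatible with this constraint---hence the number of $\xi^{(n)}$-atoms meeting $\{y:d(\mathcal{E}_n(y),\mu)<\delta\}$---by $e^{n(h_\mu(f,\xi)+2\eta)}$ for large $n$. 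Since $\mathrm{diam}\,\xi<\varepsilon$, each such atom lies in a single Bowen $(n,\varepsilon)$-ball, which gives the cover you want; after that your countable decomposition (over a weak-$*$ dense family of measures, countably many partitions, and rational $\delta$) finishes the proof.
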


\begin{Lem}\cite[Page 944]{PS2}\label{lemma-A}
	Suppose that $(X,f)$ is a dynamical system. If $K\subseteq \cM_f(X)$ is a non-empty compact connected set, then there exists a sequence $\left\{\alpha_{1}, \alpha_{2}, \cdots\right\}$ in $K$ such that
	$$
	\overline{\left\{\alpha_{j}: j \in \mathbb{N}^+, j>n\right\}}=K, \forall n \in \mathbb{N}^+ \text { and } \lim _{j \rightarrow \infty} d\left(\alpha_{j}, \alpha_{j+1}\right)=0.
	$$
\end{Lem}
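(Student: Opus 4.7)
The plan is to exploit the two hypotheses on $K$ separately: compactness of $K \subseteq \cM_f(X)$ supplies finite $\eps$-dense subsets, while connectedness lets us link any two points of $K$ by arbitrarily fine chains that stay inside $K$. I would build the desired sequence $\{\alpha_j\}$ as the concatenation, for successive $n \in \N^+$, of finite ``blocks'' that visit a $1/n$-dense subset of $K$ using consecutive steps of size at most $1/n$.

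First I would record the standard topological fact: in any connected metric space $K$, for every $\eps > 0$ and any two points $\beta, \gamma \in K$ there is an \emph{$\eps$-chain} $\beta = \gamma_0, \gamma_1, \dots, \gamma_m = \gamma$ with each $\gamma_i \in K$ and $d(\gamma_{i-1}, \gamma_i) < \eps$. Indeed, fixing $\beta$, the set of points $\eps$-chain-connected to $\beta$ in $K$ is both open and closed in $K$, hence equals $K$ by connectedness. Because $K$ is compact, it is totally bounded, so for each $n$ I can also choose a finite $1/n$-dense set $F_n = \{\beta_{n,1}, \dots, \beta_{n,k_n}\} \subseteq K$.

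Now I construct the sequence. For every $n$ and every consecutive pair $\beta_{n,i}, \beta_{n,i+1}$ within $F_n$, pick a $1/n$-chain in $K$ joining them, and likewise a $1/n$-chain in $K$ from $\beta_{n, k_n}$ to $\beta_{n+1, 1}$. Concatenating these chains for $n = 1, 2, 3, \ldots$ yields a sequence $\{\alpha_j\} \subseteq K$. Two verifications remain. Consecutive steps: any $\alpha_j, \alpha_{j+1}$ arises inside the $n$-th batch of chains for some $n$ that tends to infinity, so $d(\alpha_j, \alpha_{j+1}) \le 1/n \to 0$. Tail density: given $N \in \N^+$, $\beta \in K$, and $\eps > 0$, choose $n > 1/\eps$ large enough that the block of indices coming from $F_n$ lies entirely beyond $N$; since $F_n$ is $1/n$-dense in $K$, some $\alpha_j$ in that block satisfies $d(\alpha_j, \beta) < 1/n < \eps$, proving $\overline{\{\alpha_j : j > N\}} = K$. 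The argument is purely metric–topological, with no genuine obstacle; the only places that need care are the $\eps$-chain lemma and the simultaneous bookkeeping that forces both properties to hold.
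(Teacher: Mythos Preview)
Your argument is correct and is essentially the standard proof of this fact. Note, however, that the paper does not actually supply its own proof of this lemma: it is quoted directly from \cite[Page 944]{PS2} and used as a black box. The construction you give---finite $1/n$-dense sets from compactness, $1/n$-chains from connectedness, then concatenation---is precisely the argument Pfister and Sullivan indicate there, so there is nothing to compare beyond saying your write-up matches the intended proof.
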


\subsubsection{Proof of Theorem \ref{maintheorem-2}(1)}
Since $G_{K}\cap U\cap Trans\subseteq G_K$ and $h_{top}(f,G_K)\leq\inf\{h_\mu (f):\mu\in K\}$ by Lemma \ref{lem-Bowen},   one has 
\begin{equation}\label{equation-AE}
	h_{top}(f,G_{K}\cap U\cap Trans)\leq \inf\{h_\mu (f):\mu\in K\}.
\end{equation}
So it remains to show that
\begin{equation}\label{eq-G-K-U-geq-inf}
	h_{top}(f,G_{K}\cap U\cap Trans)\geq \inf\{h_\mu (f):\mu\in K\}.
\end{equation}

For any non-empty open set $U$, we can fix an $\varepsilon_0>0$ and a point $x_0\in X_{l_{0}}$ for some $l_{0} \geq 1$ such that $\overline{B}(x_0,\varepsilon_0):=\{y\in X: d(y,x_0) \leq \eps_0\}\subseteq U$. 
Let $\eta>0.$
Since $(X,f)$ has uniform separation property, by Lemma \ref{lem-n-eps} there exists $\eps^*>0$ so that for $\mu$ ergodic and any neighbourhood $F\subseteq \cM(X)$ of $\mu$,   there exists $n_{F,  \mu,  \eta}^*\in\N$ such that for any $n\geq n_{F,  \mu,  \eta}^*$,   there is a $(n,  \eps^*)$-separated set $\Gamma_n\subseteq X_{n,  F}\cap S_\mu$ with
\begin{equation}\label{equation-uniform-separation}
	|\Gamma_n|\geq e^{n(h_{\mu}(f)-\eta)}.
\end{equation}

Let $\varepsilon_1=\min\{\frac{1}{6}\eps^{*},\frac{1}{2}\varepsilon_0\}$ and  $\varepsilon_n=\varepsilon_1/2^n$ for any $n\geq 2.$ For any $k \geq 1$, $X_{k}$ is compact, thus there is a finite set $\Delta _{k}:=\{x_{1}^{k},x_{2}^{k},\dots,x_{t_{k}}^{k}\}\subseteq X_{k}$ such that $\Delta _{k}$ is $\varepsilon_{k}$-dense in $X_{k}$.

By Lemma \ref{lemma-A},  there exists a sequence $\left\{\alpha_{1}, \alpha_{2}, \cdots\right\}$ in $K$ such that
$$
\overline{\left\{\alpha_{j}: j \in \mathbb{N}^+, j>n\right\}}=K, \forall n \in \mathbb{N}^+ \text { and } \lim _{j \rightarrow \infty} d\left(\alpha_{j}, \alpha_{j+1}\right)=0.
$$
For any $k \in \mathbb{N}^+$, by Lemma \ref{lemma-MM}, there exists $\beta_k\in \cM_{f|_{X_{l_k}}}(X_{l_k})$ for some $l_k \in \mathbb{N^{+}}$ such that $d(\beta_k,\alpha_k)<\varepsilon_{k}$ and $h_{\beta_k}(f)\geq h_{\alpha_k}(f)-\eta.$
By applying Proposition \ref{proposition of entropy-dense property} to $(X_{l_k},f)$, there exists $\gamma_k\in \cM_{f|_{X_{l_k}}}^e(X_{l_k})$ such that $d(\gamma_k,\beta_k)<\varepsilon_{k}$ and $h_{\gamma_k}(f)\geq h_{\beta_k}(f)-\eta.$ Then we have
\begin{equation}\label{equation-D}
	\overline{\left\{\gamma_{j}: j \in \mathbb{N}^+, j>n\right\}}=\overline{\left\{\alpha_{j}: j \in \mathbb{N}^+, j>n\right\}}=K, \forall n \in \mathbb{N}^+\text { and } \lim _{j \rightarrow \infty} d\left(\gamma_{j}, \gamma_{j+1}\right)=0.
\end{equation}

Denote $H^{*}= \inf\{h_{\gamma_k}(f):j\in\mathbb{N^{+}}\}-\eta.$
Let $L_{1}=\max\{l_1,1,l_0\}$ and $L_{k}=\max\{l_{k},j,L_{k-1}\}$ for any $k\geq 2,$ then one has $L_{k} \leq L_{k+1}$ for any $k \geq 1$.
Let $\{\zeta_{k}\}$ be one strictly decreasing sequence so that 
\begin{equation}\label{equation-AF}
	\lim\limits_{k\rightarrow \infty}\zeta_{k}=0\text{ and }5\zeta_{1}(H^*-\eta)<\eta.
\end{equation}
From (\ref{equation-uniform-separation}), we get the existence of $n_{k}$ and a $\left(n_{k}, \varepsilon^{*}\right)$-separated subset $\Gamma_{k} \subseteq X_{n_{k}, \mathcal{B}\left(\gamma_{k}, \zeta_{k}\right)}\cap S_{\gamma_k}\subseteq X_{n_{k}, \mathcal{B}\left(\gamma_{k}, \zeta_{k}\right)}\cap X_{l_k}$ with
\begin{equation}\label{equation-AC}
	|\Gamma_{k}| \geq e^{n_{k} (h_{\gamma_k}(f)-\eta)}\geq e^{n_{k} H^{*}},
\end{equation}
where $\mathcal{B}\left(\gamma_{k}, \zeta_{k}\right)=\{\gamma \in \cM(X): d(\gamma,\gamma_{k})\leq\zeta_k\}.$
We may assume that $n_k$ satisfies
\begin{equation}\label{equation-B}
	\frac{t_kK_k+K_{k+1}}{n_k}\leq \zeta_{k},
\end{equation}
where $K_k=K_{\eps_{k}}$ is defined in Definition \ref{definition of specification} for $(X_{L_{k}},f)$ and $\eps_k.$

Next, we choose a strictly increasing $\left\{N_{k}\right\}_{k=1}^{+\infty}$ with $N_{k} \in \mathbb{N}$ such that for any $k\geq 1,$ on has
\begin{equation}\label{equation-A}
	\begin{split}
		&n_{k+1}+(t_{k+1}-1)K_{k+1}+2K_{k+2}+n_{k+2} \leq \zeta_{k} \sum_{j=1}^{k}N_{j}n_{j},\\
		&\sum_{j=1}^{k}\left(N_{j}(n_{j}+K_j-1)+t_{j}K_j\right)+K_{k+1} \leq \zeta_{k+1}\sum_{j=1}^{k+1}N_{j}n_{j}.
	\end{split}
\end{equation}
Now, we define a sequences $\{n_{j}'\}_{j=1}^{+\infty}$ as
$$
n_{j}'=\left\{\begin{array}{ll}
	K_k, & \text { for } j = 0,\\
	n_{k}+K_k-1, & \text { for } N_{1}+N_{2}+\dots+N_{k-1}+t_{1}+\dots+t_{k-1}+q\ \mathrm{with}\  1\leq q \leq N_{k},\\
	K_k, & \text { for } N_{1}+N_{2}+\dots+N_{k}+t_{1}+\dots+t_{k-1}+q \ \mathrm{with}\ 1\leq q \leq t_{k}-1,\\
	K_{k+1}, & \text { for } N_{1}+N_{2}+\dots+N_{k}+t_{1}+\dots+t_{k},
\end{array}\right.
$$
where $N_0=t_0=0.$
Let $M_{-1}=0$ and $M_{j}=\sum_{i=0}^{j}n_i'$ for any $j\in\mathbb{N}.$

Denote $\Gamma_{k}'=\Gamma_{k}\times \cdots \times\Gamma_{k}=\{(y_1^k,\dots,y_{N_k}^k):y_j^k\in\Gamma_{k}\text{ for } 1\leq j\leq N_k\}.$ Then $|\Gamma_{k}'|=|\Gamma_{k}|^{N_k}.$
Now, we inductively construct  a subset of $G_{K}\cap U\cap Trans$ such that its topological entropy is close to $\inf\{h_\mu(f):\mu\in K\}.$ This construction is the cornerstone of the proof.

{\bf Step 1: construct $G_{1}.$} 
By specification property of $(X_{L_1},f|_{X_{L_1}}),$ for any $\mathbf{y}_1=(y_1^1,\dots,y_{N_1}^1)\in \Gamma_1',$
\begin{equation*}
		G_1(\mathbf{y}_1,\eps_1):=\overline{B}(x_0,\eps_1)\bigcap \left(\cap_{j=1}^{N_1}f^{-M_{j-1}}\overline{B}_{n_1}(y_j^1,\eps_1)\right)
		\bigcap \left(\cap_{j=1}^{t_1}f^{-M_{N_1+j-1}}\overline{B}(x_j^1,\eps_1)\right)
		\neq\emptyset,
\end{equation*}
where $\overline{B}_{n}(x,\varepsilon)=\{y\in X:d_n(x,y)\leq\varepsilon\}.$
Let $\mathbf{y}_1^1=(y_1^{1,1},\dots,y_{N_1}^{1,1}),\ \mathbf{y}_1^2=(y_1^{1,2},\dots,y_{N_1}^{1,2})\in \Gamma_1'$ with $\mathbf{y}_1^1\neq \mathbf{y}_1^2,$ if $y_{j'}^{1,1}\neq y_{j'}^{1,2}$ for some $1\leq j'\leq N_1,$ then for any $z_1^1\in G_1(\mathbf{y}_1^{1},2\eps_1)$ and $z_1^2\in G_1(\mathbf{y}_1^2,2\eps_1),$ 
\begin{equation}\label{equation-AA}
	z_1^1\text{ and }z_1^2\text{ are }(M_{j'},\frac{1}{3}\eps^*)\text{-separated}
\end{equation} 
by Lemma \ref{lemma-separation}.
We choose $$z_1=z_1(\mathbf{y}_1)\in G_1(\mathbf{y}_1,\eps_1)$$ for any $\mathbf{y}_1\in \Gamma_1''$, then $$F_{1}:=\{z_1(\mathbf{y}_1):\mathbf{y}_1\in \Gamma_1''\}$$ is a $(M_{N_1+t_1-1},\frac{1}{3}\eps^*)$-separated set and $|F_1|=|\Gamma_1''|$.
Let $$G_1=\bigcup_{\mathbf{y}_1\in \Gamma_1''}G_1(\mathbf{y}_1,2\eps_1),$$ then $G_1$ is a non-empty closed sets.

{\bf Step k: construct $G_{k}.$} 
By specification property of $(X_{L_k},f|_{X_{L_k}}),$ for any $z_{k-1}=z_{k-1}(\mathbf{y}_1,\dots,\mathbf{y}_{k-1})\in F_{k-1}$ and any $\mathbf{y}_k=(y_1^k,\dots,y_{N_k}^k)\in \Gamma_k',$ 
\begin{equation*}
	\begin{split}
		G_k(\mathbf{y}_1,\dots,\mathbf{y}_k,\eps_k):=&\overline{B}_{M_{\sum_{i=1}^{k-1}(N_i+t_i)-1}}(z_{k-1},\eps_k)\bigcap \left(\cap_{j=1}^{N_k}f^{-M_{\sum_{i=1}^{k-1}(N_i+t_i)-1+j}}\overline{B}_{n_k}(y_j^k,\eps_k)\right)\\
		&\bigcap \left(\cap_{j=1}^{t_k}f^{-M_{\sum_{i=1}^{k-1}(N_i+t_i)-1+N_k+j}}\overline{B}(x_j^k,\eps_k)\right)
		\neq\emptyset.
	\end{split}
\end{equation*}
Then $\emptyset \neq G_k(\mathbf{y}_1,\dots,\mathbf{y}_k,\eps_k)\subseteq G_k(\mathbf{y}_1,\dots,\mathbf{y}_k,2\eps_k)\subseteq  \overline{B}_{M_{\sum_{i=1}^{k-1}(N_i+t_i)-1}}(z_{k-1},2\eps_k).$ By $\eps_{k-1}=2\eps_{k}$ and $z_{k-1}\in G_k(\mathbf{y}_1,\dots,\mathbf{y}_{k-1},\eps_{k-1}),$  we have
\begin{equation}\label{equation-C}
	G_k(\mathbf{y}_1,\dots,\mathbf{y}_k,2\eps_k)\subseteq G_k(\mathbf{y}_1,\dots,\mathbf{y}_{k-1},2\eps_{k-1}).
\end{equation}
Let $\mathbf{y}_i^1=(y_1^{i,1},\dots,y_{N_i}^{i,1}),\ \mathbf{y}_i^2=(y_1^{i,2},\dots,y_{N_i}^{i,2})\in \Gamma_i'$ for each $1\leq i\leq k$ with $(\mathbf{y}_1^1,\dots,\mathbf{y}_k^1)\neq (\mathbf{y}_1^2,\dots,\mathbf{y}_k^2),$ if $y_{j'}^{i',1}\neq y_{j'}^{i',2}$ for some $1\leq i'\leq k$ and some $1\leq j'\leq N_{i'},$ then for any $z_k^1\in G_k(\mathbf{y}_1^1,\dots,\mathbf{y}_k^1,2\eps_k)$ and $z_k^2\in G_k(\mathbf{y}_1^2,\dots,\mathbf{y}_k^2,2\eps_k),$
\begin{equation}\label{equation-AB}
	z_k^1\text{ and }z_k^2\text{ are }(M_{\sum_{i=1}^{i'-1}(N_i+t_i)+j'},\frac{1}{3}\eps^*)\text{-separated}
\end{equation}
by Lemma \ref{lemma-separation}, (\ref{equation-C}) and (\ref{equation-AA}).
We choose $$z_{k}=z_{k}(\mathbf{y}_1,\dots,\mathbf{y}_{k})\in G_k(\mathbf{y}_1,\dots,\mathbf{y}_k,\eps_k)$$ for any $(\mathbf{y}_1,\dots,\mathbf{y}_{k})\in  \Gamma_1'\times\dots\times\Gamma_k'$, then $$F_{k}:=\{z_k(\mathbf{y}_1,\dots,\mathbf{y}_{k}):(\mathbf{y}_1,\dots,\mathbf{y}_{k})\in  \Gamma_1'\times\dots\times\Gamma_k'\}$$ is a $(M_{\sum_{i=1}^{k}(N_i+t_i)-1},\frac{1}{3}\eps^*)$-separated set and $|F_k|=|\Gamma_1'\times\dots\times\Gamma_k'|=\prod_{i=1}^{k}|\Gamma_i'|$.
Let $$G_k=\bigcup_{(\mathbf{y}_1,\dots,\mathbf{y}_k)\in \Gamma_1'\times\dots\times\Gamma_k'}G_k(\mathbf{y}_1,\dots,\mathbf{y}_k,2\eps_k),$$ then $G_k$ is a non-empty closed sets, and $G_{k}\subset G_{k-1}$ by (\ref{equation-C}).

Let
\begin{equation*}
	G:=\bigcap_{k\geq 1}G_k.
\end{equation*}
Note that $G_{k+1}\subset G_k$ for any $k\in\mathbb{N^{+}},$ then $G$ is a non-empty closed set.

Next we will prove the following:
\begin{enumerate}
	\item $G\subseteq Trans\cap U$.
	\item $G\subseteq G_{K}$.
	\item $h_{top}(f,G)\geq \inf\{h_\mu (f):\mu\in K\}-4\eta.$
\end{enumerate}

\textbf{Proof of Item $(1)$}: 
Fix $z\in G.$ 
For any $x\in X$ and any $k\in\mathbb{N^{+}},$ there is $k'\geq k$ and $x'\in X_{k'}$ such that $d(x,x')\leq \eps_{k}$ by $\overline{\bigcup_{n\geq 1}X_{n}}=X.$ Since $\Delta _{k'}:=\{x_{1}^{k'},x_{2}^{k'},\dots,x_{t_{k'}}^{k'}\}\subseteq X_{k'}$ is $\varepsilon_{k'}$-dense in $X_{k,}$, then there is $1\leq i'\leq t_{k'}$ such that $d(x',x_{i'}^{k'})\leq \eps_{k'}.$ By  $$z\in G\subseteq G_{k'}\subseteq \cap_{j=1}^{t_{k'}}f^{-M_{\sum_{i=1}^{k'-1}(N_i+t_i)-1+N_{k'}+j}}\overline{B}(x_j^{k'},2\eps_{k'}),$$ we have $d(f^{M_{\sum_{i=1}^{k'-1}(N_i+t_i)-1+N_{k'}+i'}}(z),x_{i'}^{k'})\leq 2\eps_{k'}.$   Thus
\begin{equation*}
	\begin{split}
		d(f^{M_{\sum_{i=1}^{k'-1}(N_i+t_i)-1+N_{k'}+i'}}(z),x)&\leq d(f^{M_{\sum_{i=1}^{k'-1}(N_i+t_i)-1+N_{k'}+i'}}(z),x_{i'}^{k'})+d(x',x_{i'}^{k'})+d(x,x')\\
		&\leq 2\eps_{k'}+\eps_{k'}+\eps_{k}\\
		&\leq 4\eps_{k}.
	\end{split}
\end{equation*}
Then $orb(z,f)$ is $4\varepsilon_{k}$-dense in $X$ for any $k \geq 1,$ we obtain $z \in Trans$.
Note that $z\in G\subseteq G_1\subseteq \overline{B}(x_0,2\eps_1)\subseteq \overline{B}(x_0,\eps_0)\subseteq U.$ So we have $G\subseteq Trans\cap U$.

\textbf{Proof of Item $(2)$}: 
We define the stretched sequence $\{\gamma_{m}'\}_{m=1}^{\infty}$ by
\begin{equation}
	\gamma_{m}':=\gamma_{k} \quad \mathrm{if} \quad M_{N_{1}+N_{2}+\dots+N_{k-1}+t_{1}+\dots+t_{k-1}}+1\leq m\leq M_{N_{1}+N_{2}+\dots+N_{k}+t_{1}+\dots+t_{k}}.
\end{equation}
Then the sequence $\{\gamma_{m}'\}$ has the same limit-point set as the sequence of $\{\gamma_{k}\}$. If for any $z\in G,$ one has 
\begin{equation}\label{equation-DJ}
	\lim_{m\rightarrow \infty}d(\mathcal{E}_{m}(z),\gamma_{m}')=0,
\end{equation}
then the two sequences $\{\mathcal{E}_{m}(z)\}$, $\{\gamma_{m}'\}$ have the same limit-point set. Thus, we can obtain $G\subseteq G_{K}$ by (\ref{equation-D}). Meanwhile, it follows from (\ref{equation-A}) that $\lim\limits_{j\rightarrow \infty}\frac{M_{j+1}}{M_{j}}=1$. So from the definition of $\{\gamma_{m}'\}$, to verify (\ref{equation-DJ}) it is sufficient to prove that
\begin{equation}\label{equ:item1}
	\lim_{j\rightarrow \infty}d(\mathcal{E}_{M_{j}}(z),\gamma_{M_{j}}')=0.
\end{equation}
To this end, noting that for any $j\in\mathbb{N}$, there exists a unique $k\in\mathbb{N}$ such that
$$
N_{1}+N_{2}+\dots+N_{k}+t_{1}+\dots+t_{k}+1\leq j\leq N_{1}+N_{2}+\dots+N_{k+1}+t_{1}+\dots+t_{k+1}.
$$
Denote $\mathcal{L}_k=M_{N_{1}+N_{2}+\dots+N_{k}+t_{1}+\dots+t_{k}}.$ Since $z\in G\subseteq G_{k+1},$ there is $$(\mathbf{y}_1,\dots,\mathbf{y}_{k+1})=(y_1^1,\dots,y_{N_1}^1,\dots,y_1^{k+1},\dots,y_{N_{k+1}}^{k+1})\in \Gamma_1'\times\dots\times\Gamma_{k+1}'$$ such that $z\in G(\mathbf{y}_1,\dots,\mathbf{y}_{k},2\eps_{k}).$  We split into two cases to discuss.

Case (1): If $j=N_{1}+N_{2}+\dots+N_{k}+t_{1}+\dots+t_{k}+q$ with $1\leq q \leq N_{k+1}$, then by Lemma \ref{measure distance} and (\ref{equation-B}) one has
\begin{equation}\label{1}
	\begin{split}
		&d(\mathcal{E}_{M_{j}-\mathcal{L}_k}(f^{\mathcal{L}_k}(z)),\gamma_{k+1}) \\ 
		\leq~~&d(\mathcal{E}_{M_{j}-\mathcal{L}_k}(f^{\mathcal{L}_k}(z)),\frac{1}{q}\sum_{i=1}^{q}\mathcal{E}_{n_{k+1}}(y^{k+1}_{i}))+d(\frac{1}{q}\sum_{i=1}^{q}\mathcal{E}_{n_{k+1}}(y^{k+1}_{i}),\gamma_{k+1}) \\
		\leq~~& 2\eps_{k+1}+2\zeta_{k+1}+\frac{1}{q}\sum_{i=1}^{q}d(\mathcal{E}_{n_{k+1}}(y^{k+1}_{i}),\gamma_{k+1})\\
		\leq~~& 2\eps_{k+1}+3\zeta_{k+1}.
	\end{split}
\end{equation}

Case (2): If $j=N_{1}+N_{2}+\dots+N_{k+1}+t_{1}+\dots+t_{k}+q$ with $1\leq q \leq t_{k+1},$ then one has
\begin{equation}\label{AC}
	\begin{split}
		&d(\mathcal{E}_{M_{j}-\mathcal{L}_k}(f^{\mathcal{L}_k}(z)),\gamma_{k+1})  \\ \leq~~&\frac{M_{\sum_{i=1}^{k}(N_i+t_i)+N_{k+1}}-\mathcal{L}_k}{M_{j}-\mathcal{L}_k}d\left(\mathcal{E}_{M_{\sum_{i=1}^{k}(N_i+t_i)+N_{k+1}}-\mathcal{L}_k}\left(f^{\mathcal{L}_k}(z)\right),\gamma_{k+1}\right)\\
		&+\frac{M_{j}-M_{\sum_{i=1}^{k}(N_i+t_i)+N_{k+1}}}{M_{j}-\mathcal{L}_k}\times 1\\
		\leq~~& 1\times (2\eps_{k+1}+3\zeta_{k+1})+\frac{(t_{k+1}-1)K_{k+1}+K_{k+2}}{N_{k+1}(n_{k+1}+K_{k+1}-1)} \\
		\leq~~& 2\eps_{k+1}+3\zeta_{k+1}+\frac{(t_{k+1}-1)K_{k+1}+K_{k+2}}{n_{k+1}}\\
		\leq~~& 2\eps_{k+1}+4\zeta_{k+1}.
	\end{split}
\end{equation}
by (\ref{1}) and (\ref{equation-B}).
Meanwhile, by using (\ref{1}), one also has
\begin{equation}\label{AB}
	\begin{split}
		&d\left(\mathcal{E}_{M_{\sum_{i=1}^{k-1}(N_i+t_i)+N_{k}}-\mathcal{L}_{k-1}}\left(f^{\mathcal{L}_{k-1}}(z)\right),\gamma_{k+1}\right)\\ \leq~~&d\left(\mathcal{E}_{M_{\sum_{i=1}^{k-1}(N_i+t_i)+N_{k}}-\mathcal{L}_{k-1}}\left(f^{\mathcal{L}_{k-1}}(z)\right),\gamma_{k}\right)+d(\gamma_{k+1},\gamma_{k})\\
		\leq~~& 2\eps_{k}+3\zeta_{k}+d(\gamma_{k+1},\gamma_{k}).
	\end{split}
\end{equation}
Thus,
\begin{align*}
	&d(\mathcal{E}_{M_{j}}(z),\gamma_{M_{j}}')=d(\mathcal{E}_{M_{j}}(z),\gamma_{k+1})\\
	\leq~~&\frac{\mathcal{L}_{k-1}}{M_{j}}d(\mathcal{E}_{\mathcal{L}_{k-1}}(z),\gamma_{k+1})+\frac{M_{\sum_{i=1}^{k-1}(N_i+t_i)+N_{k}}-\mathcal{L}_{k-1}}{M_{j}}d\left(\mathcal{E}_{M_{\sum_{i=1}^{k-1}(N_i+t_i)+N_{k}}-\mathcal{L}_{k-1}}\left(f^{\mathcal{L}_{k-1}}(z)\right),\gamma_{k+1}\right)\\
	&+\frac{\mathcal{L}_{k}-M_{\sum_{i=1}^{k-1}(N_i+t_i)+N_{k}}}{M_{j}}d(\mathcal{E}_{\mathcal{L}_{k}-M_{\sum_{i=1}^{k-1}(N_i+t_i)+N_{k}}}(f^{M_{\sum_{i=1}^{k-1}(N_i+t_i)+N_{k}}}(z)),\gamma_{k+1})\\
	&+\frac{M_{j}-\mathcal{L}_k}{M_{j}}d(\mathcal{E}_{M_{j}-\mathcal{L}_k}(f^{\mathcal{L}_k}(z)),\gamma_{k+1})\\
	\leq~~& \frac{\mathcal{L}_{k-1}}{\mathcal{L}_k}\times 1 + 1\times (2\eps_{k}+3\zeta_{k}+d(\gamma_{k+1},\gamma_{k})) + \frac{(t_{k}-1)K_{k}+K_{k+1}}{n_{k}}\times 1+ 1\times(2\eps_{k+1}+4\zeta_{k+1})\\
	\leq~~& \zeta_{k}+2\eps_{k}+3\zeta_{k}+d(\gamma_{k+1},\gamma_{k})+\zeta_{k}+ 2\eps_{k+1}+4\zeta_{k+1}\\
	=~~	  &9\zeta_{k}+4\eps_{k}+d(\gamma_{k},\gamma_{k+1}).
\end{align*}
The second inequality is followed by using (\ref{1}), (\ref{AB}) and (\ref{AC}). The last inequality is followed by using (\ref{equation-A}) and (\ref{equation-B}). Taking $k$ to infinity on both sides, and note that LHS goes to zero by (\ref{equation-D}), this directly yields \eqref{equ:item1}, and the proof of item $(2)$ is completed.

\textbf{Proof of Item $(3)$}: Next we will prove
\begin{equation}\label{equ:packinghardpart}
	h_{top}(f,G)\geq \inf\{h_\mu (f):\mu\in K\}-4\eta.
\end{equation}
To this end, define $\mu_{k}:=\frac{1}{\sharp F_k}\sum_{x\in F_{k}}\delta_{x}$. Suppose $\mu=\lim\limits_{n\to\infty}\mu_{k_s}$ for some $k_s\to \infty$. For any fixed $p\in\mathbb{N^{+}}$ and all $p'\geq 0$, 
since $F_{p+p'}\subseteq G_{p+p'},$ one has $\mu_{p+p'}(G_{p+p'})=1.$ Combining with $G_{p+p'}\subseteq G_{p},$ we have $\mu_{p+p'}(G_{p})=1.$
Then $\mu(G_{p})\geq \limsup\limits_{n\to\infty}\mu_{k_s}(G_{p})=1$. It follows that
$\mu(G)=\lim\limits_{p\to \infty}\mu(G_p)=1$.

For $k \geqslant 1, i=1,2, \cdots, N_{k}$, let
\begin{equation}\label{equation_1}
	m_{N_{1}+\cdots+N_{k-1}+i}:=N_{1}+\cdots+N_{k-1}+t_{1}+\cdots+t_{k-1}+i.
\end{equation}
Then for any $p\geq 1,$ there is $k\geq 1$ and $1\leq i\leq N_k$ such that $m_p=N_{1}+\cdots+N_{k-1}+t_{1}+\cdots+t_{k-1}+i.$ Thus we have 
\begin{equation}\label{equation-AD}
	M_{m_{p+1}}\leq M_{m_p}+\max\{n_k,(t_k-1)K_k+2K_{k+1}+n_{k+1}\}.
\end{equation}

Fix $z\in G$ and $n\geq M_{m_{N_1+1}},$ there exists $k'\geq 2$ and $1\leq i'\leq N_{k'}$ such that $$M_{m_{N_{1}+\cdots+N_{k'-1}+i'}}\leq n\leq M_{m_{N_{1}+\cdots+N_{k'-1}+i'+1}}.$$
Since $z\in G\subseteq G_{k'},$ then there exists $\mathbf{y}_{i}^0=(y_1^{i,0},\dots,y_{N_{i}}^{i,0})\in \Gamma_{i}'$ for each $1\leq i\leq k'$ such that
$$z\in G_{k'}(\mathbf{y}_1^0,\dots,\mathbf{y}_k^0,2\eps_k).$$
Note that for any $k_s\geq k'$ and any $(\mathbf{y}_1,\dots,\mathbf{y}_{k_s})\in  \Gamma_1'\times\dots\times\Gamma_{k_s}',$ if $(y_1^{1},\dots,y_{N_{1}}^{1},\dots,y_1^{k'},\dots,y_{i'}^{k'})\neq (y_1^{1,0},\dots,y_{N_{1}}^{1,0},\dots,y_1^{k',0},\dots,y_{i'}^{k',0}),$ then $z$ and 
$z_{k_s}(\mathbf{y}_1,\dots,\mathbf{y}_{k_s})\in F_{k_s} $ are $(M_{\sum_{i=1}^{k'-1}(N_i+t_i)+i'},\frac{1}{3}\eps^*)$-separated by (\ref{equation-AB}). That is $z_{k_s}(\mathbf{y}_1,\dots,\mathbf{y}_{k_s})\not\in B_{M_{\sum_{i=1}^{k'-1}(N_i+t_i)+i'}}(z,\frac{1}{3}\eps^*)$. Then we have
\begin{equation}\label{equation-AG}
	\begin{split}
		\mu_{k_{s}}(B_{n}(z,\frac{1}{3}\eps^*))\leq&\mu_{k_{s}}(B_{M_{\sum_{i=1}^{k'-1}(N_i+t_i)+i'}}(z,\frac{1}{3}\eps^*))\\
		=& \frac{1}{|\Gamma_{1}'|\times|\Gamma_{2}'|\times\dots\times|\Gamma_{k_s}'|}\times(|\Gamma_{k'}|^{N_{k'}-i'}\times |\Gamma_{k'+1}'|\times\dots\times|\Gamma_{k_s}'|)\\
		=&\frac{1}{|\Gamma_{1}|^{N_1}\times|\Gamma_{2}|^{N_2}\times\dots\times|\Gamma_{k'-1}|^{N_{k'-1}}\times|\Gamma_{k'}|^{i'}}\\
		\leq& e^{-(\sum_{i=1}^{k'-1}n_iN_i+i'n_{k'})H^*}
	\end{split}
\end{equation}
by (\ref{equation-AC}). Note that
\begin{equation}
	\begin{split}
		\frac{n-(\sum_{i=1}^{k'-1}n_iN_i+i'n_{k'})}{\sum_{i=1}^{k'-1}n_iN_i+i'n_{k'}}&\leq \frac{M_{m_{N_{1}+\cdots+N_{k'-1}+i'+1}}-(\sum_{i=1}^{k'-1}n_iN_i+i'n_{k'})}{\sum_{i=1}^{k'-1}n_iN_i+i'n_{k'}}\\
		&\leq \frac{M_{m_{N_{1}+\cdots+N_{k'-1}+i'}}-(\sum_{i=1}^{k'-1}n_iN_i+i'n_{k'})}{\sum_{i=1}^{k'-1}n_iN_i+i'n_{k'}}\\
		&+\frac{\max\{n_{k'},(t_{k'}-1)K_{k'}+2K_{k'+1}+n_{k'+1}\}}{\sum_{i=1}^{k'-1}n_iN_i+i'n_{k'}}\\
		&\leq\frac{\sum_{i=1}^{k'-1}(N_i+t_i)K_i+i'K_{k'}}{\sum_{i=1}^{k'-1}n_iN_i+i'n_{k'}}+\zeta_{k'-1}\\
		&\leq \frac{\sum_{i=1}^{k'-2}(N_i+t_i)K_i}{n_{k'-1}N_{k'-1}}+ \frac{(N_{k'-1}+t_{k'-1})K_{k'-1}}{n_{k'-1}N_{k'-1}}+\frac{K_{k'}}{n_{k'}}+\zeta_{k'-1}\\
		&\leq \zeta_{k'-1}+ \frac{K_{k'-1}}{n_{k'-1}}+ \frac{t_{k'-1}K_{k'-1}}{n_{k'-1}N_{k'-1}}+\zeta_{k'}+\zeta_{k'-1}\\
		&\leq \zeta_{k'-1}+ \zeta_{k'-1}+ \zeta_{k'-1}+\zeta_{k'}+\zeta_{k'-1}\\
		&\leq 5\zeta_{1}
	\end{split}
\end{equation}
by (\ref{equation-AD}), (\ref{equation-A}) and (\ref{equation-B}), then if $H^*-\eta>0,$ by (\ref{equation-AF}) we have 
\begin{equation}\label{equation-AH}
	(H^*-\eta)(n-(\sum_{i=1}^{k'-1}n_iN_i+i'n_{k'}))<\eta(\sum_{i=1}^{k'-1}n_iN_i+i'n_{k'}),
\end{equation}
if $H^*-\eta\leq0,$ (\ref{equation-AH}) also holds. So we have
\begin{equation*}
	(\sum_{i=1}^{k'-1}n_iN_i+i'n_{k'})H^*>n(H^*-\eta).
\end{equation*}
Combining with (\ref{equation-AG}),  we have
\begin{equation*}
	\mu_{k_{s}}(B_{n}(z,\frac{1}{3}\eps^*))<e^{-n(H^*-\eta)}.
\end{equation*}
Then
\begin{equation}
	\mu(B_{n}(z,\frac{1}{3}\eps^*))\leq\liminf_{s\rightarrow \infty}\mu_{k_{s}}(B_{n}(z,\frac{1}{3}\eps^*))
	\leq e^{-n(H^*-\eta)}.
\end{equation}
Using (\ref{equation-GI}), we have
\begin{equation}
	\underline{h}_{\mu}(f,z)
	\geq \liminf_{n\to\infty}-\frac{1}{n}\log\mu(B_{n}(z,\frac{1}{3}\eps^*)) \\
	\geq H^{*}-\eta.
\end{equation}
This implies $\underline{h}_{\mu}(f)=\int\underline{h}_{\mu}(f,x)d\mu\geq H^{*}-\eta.$
Taking supermum over all $\nu \in \mathcal{M}(X)$ with $\nu(G)=1,$ and applying Lemma \ref{lemma-aa}, we eventually have
$$h_{top}(f,G)\geq H^{*}-\eta= \inf\{h_{\gamma_k}(f):j\in\mathbb{N^{+}}\}-2\eta\geq \inf\{h_\mu(f):\mu\in K\}-4\eta.$$
By the arbitrariness of $\eta$ and $G\subseteq G_{K}\cap U\cap Trans,$ this yields
$$h_{top}(f,G_{K}\cap U\cap Trans)\geq \inf\{h_\mu (f):\mu\in K\}$$
which completes the proof of Theorem \ref{maintheorem-2}(1).
\qed

\subsection{ Proof of Theorem \ref{maintheorem-2}(2)} 
Since $K\subseteq  \cM_{f|_{X_{n_0}}}(X_{n_0})$ is a non-empty compact connected set, by Lemma \ref{lemma-A},  there exists a sequence $\left\{\beta_{1}, \beta_{2}, \cdots\right\}$ in $K$ such that
$$
\overline{\left\{ \beta_{j}: j \in \mathbb{N}^+, j>n\right\}}=K, \forall n \in \mathbb{N}^+ \text { and } \lim _{j \rightarrow \infty} d\left( \beta_{j},  \beta_{j+1}\right)=0.
$$
Note that $(X_{n_0},f|_{X_{n_0}})$ has uniform separation property and $\beta_j\in \cM_{f|_{X_{n_0}}}(X_{n_0})$ for any $j\in\mathbb{N^{+}}.$
Then using the same method as Theorem \ref{maintheorem-2}(1) for $\left\{\beta_{1}, \beta_{2}, \cdots\right\}$, we have $$G_{K}\cap U\cap Trans\neq\emptyset,\text{ and }h_{top}(f,G_{K}\cap U\cap Trans\})=\inf\{h_{\mu}(f):\mu\in K\},$$ which completes the proof of Theorem \ref{maintheorem-2}(2). \qed

\subsection{Proof of Theorem \ref{maintheorem-3}(1)}

\subsubsection{Periodic decomposition}

Suppose we have a collection $\mathcal{D}=\{D_{0},D_{1},\dots,D_{n-1}\}$ of subsets of
$X$ such that $f(D_{i}) \subseteq D_{i+1(\mathrm{mod}\ n)}$ for any $i \in \{1,\dots,n-1\}$. We call $\mathcal{D}$ a periodic orbit of sets. Clearly, the union of the $D_{i}$ is an invariant subset in these circumstances and $f^{n}$
is invariant on each $D_{i}$. Also, $f^{k}(D_{i}) \subseteq D_{i+k(\mathrm{mod}\ n)}$ for all $k \in \mathbb{N}$. We call $\mathcal{D}$ a periodic decomposition if the $D_{i}$ are all closed, $D_{i}\cap D_{j}$ is nowhere dense whenever $i\neq j$, and the union of the $D_{i}$ is $X$. The condition that the $D_{i}$ have nowhere dense overlap is equivalent to saying that their interiors are disjoint. In the special case where the $D_{i}$ are mutually disjoint, each $D_{i}$ is clopen since its complement is a finite union of closed sets. The number of sets in a decomposition will be called the length of the decomposition.
A closed set is called regular closed if it is the closure of its interior or, equivalently, it is the closure of an open set. A periodic decomposition is regular if all of its elements are regular closed. Since clopen sets are regular closed, a periodic decomposition is always regular in the case where $D_{i}\cap D_{j}$ is empty for $i\neq j$.

Recall that $(X,f)$ is mixing if for any non-empty open sets $U,V\subseteq X,$ there is $N\in\mathbb{N}$ such that $f^{-n}U\cap V\neq\emptyset$ for any $n\geq N.$
\begin{Lem}\label{AU}\cite[Lemma 3.16]{HT}
	Suppose that a dynamical system $(X,f)$ is transitive and has shadowing property. If there is a periodic point $p_{0}$ with period $l$, then there is a regular periodic decomposition $\mathcal{D}=\{D_{0},D_{1},\dots,D_{n-1}\}$ for some $n$ dividing $l$ such that $D_{i}\cap D_{j}$ is empty for $0\leq i\neq j\leq n-1$, $f^{n}$ is mixing on each $D_{i}$ and $f^{n}|_{D_{i}}$ has shadowing property for any $i \in \{0,1,\dots,n-1\}$.
\end{Lem}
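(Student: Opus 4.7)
My plan is to pinpoint a natural integer $n \mid l$ capturing the cyclic structure of $(X,f)$, and to take $D_0, \dots, D_{n-1}$ to be the $n$ cyclic components. I will use two standard consequences of transitivity together with shadowing: density of periodic points, and the periodic shadowing property (a periodic $\delta$-pseudo-orbit admits a periodic $\varepsilon$-shadow). Let $\Pi$ denote the set of periods of periodic points of $f$, and set $n := \gcd(\Pi)$. Since $p_0$ has period $l \in \Pi$, automatically $n \mid l$.

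Fix a transitive point $x_0 \in X$ and define
\[
D_i := \overline{\{f^{kn+i}(x_0) : k \in \mathbb{N}\}}, \qquad i = 0, 1, \dots, n-1.
\]
Plainly $f(D_i) \subseteq D_{(i+1) \bmod n}$, and $\bigcup_{i=0}^{n-1} D_i = X$ by transitivity of $x_0$. The heart of the argument is pairwise disjointness. Suppose, for a contradiction, that $y \in D_i \cap D_j$ with $0 \leq i < j \leq n-1$. Given a small $\varepsilon > 0$, let $\delta > 0$ come from shadowing, and pick indices with $f^{kn+i}(x_0), f^{mn+j}(x_0) \in B(y, \delta/2)$ and $mn+j > kn+i$. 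Then the finite segment $f^{kn+i}(x_0), f^{kn+i+1}(x_0), \dots, f^{mn+j-1}(x_0)$ extended cyclically is a periodic $\delta$-pseudo-orbit of period $L := (m-k)n + (j-i)$, whose only $\delta$-jump is the near-coincidence at $y$. Periodic shadowing yields a genuine periodic point $q$ with $\mathrm{per}(q) \mid L$. Since $n = \gcd(\Pi)$, we get $n \mid \mathrm{per}(q) \mid L = (m-k)n + (j-i)$, forcing $n \mid (j-i)$, which is impossible as $0 < j-i < n$. Hence the $D_i$ are pairwise disjoint; being a finite closed cover of the compact space $X$ with empty overlaps, each $D_i$ is clopen and therefore regular closed.

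It remains to check the properties of $f^n|_{D_i}$. Transitivity of $f^n|_{D_i}$ is built in via the dense $f^n$-orbit $\{f^{kn+i}(x_0)\}_{k \geq 0}$. Mixing of $f^n|_{D_i}$ holds because any further non-trivial cyclic decomposition of $D_i$ under $f^n$ would, by exactly the same pseudo-orbit and gcd argument, produce periodic points of $f$ whose periods have gcd strictly less than $n$, contradicting the minimality of $n$. For the shadowing property of $f^n|_{D_i}$: let $\eta := \min_{i \neq j} d(D_i, D_j) > 0$, and given $\varepsilon \in (0, \eta)$, take the $\delta$ coming from $f$-shadowing; any $f^n|_{D_i}$-$\delta$-pseudo-orbit $(y_k)$ extends to an $f$-$\delta$-pseudo-orbit by inserting $f(y_k), \dots, f^{n-1}(y_k)$ between consecutive terms, and its $\varepsilon$-shadow $z$ automatically satisfies $f^{kn}(z) \in D_i$ because $f^{kn}(z)$ is $\varepsilon$-close to $y_k \in D_i$ and $\varepsilon < \eta$. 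The principal obstacle is the disjointness step: converting an approximate coincidence $f^{kn+i}(x_0) \approx f^{mn+j}(x_0)$ into a genuine periodic orbit whose period forces the divisibility $n \mid (j-i)$ requires the periodic shadowing property in an essential way; once the cyclic decomposition is in place, mixing and shadowing on each $D_i$ follow by routine adaptation of the same ideas.
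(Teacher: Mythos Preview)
The paper does not prove this lemma; it is quoted from \cite[Lemma~3.16]{HT}, so there is no in-paper argument to compare against directly. Your strategy---take $n=\gcd(\Pi)$ and carve $X$ into the $n$ closures of residue classes along a transitive orbit---is the right picture, and your arguments for the shadowing property of $f^{n}|_{D_i}$ and (modulo a sign slip: a finer decomposition would force $\gcd(\Pi)\ge nm>n$, not $<n$) for mixing are sound once the decomposition is established.

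The gap is in the disjointness step. You invoke the \emph{periodic shadowing property}---that a periodic $\delta$-pseudo-orbit of period $L$ is $\varepsilon$-traced by a genuine periodic point $q$ with $f^{L}(q)=q$, hence $\mathrm{per}(q)\mid L$---as a ``standard consequence of transitivity together with shadowing.'' It is not: periodic shadowing is in general a strictly stronger property than ordinary shadowing on compact metric spaces, and adding transitivity plus one periodic point does not automatically close the gap. From shadowing alone you obtain only a point $z$ whose forward $f^{L}$-orbit stays $\varepsilon$-close to the base point of the pseudo-orbit; nothing forces $f^{L}(z)=z$, so the divisibility $n\mid L$ does not follow and the intended contradiction evaporates. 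The same difficulty undercuts your appeal to density of periodic points, which you also list as a preliminary consequence but which is itself usually proved \emph{via} periodic shadowing. The standard route to this spectral decomposition (Banks' theory of regular periodic decompositions for transitive maps, combined with the fact that shadowing upgrades chain-mixing to topological mixing and makes the pieces clopen) avoids manufacturing exact periodic shadows altogether.
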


\begin{Lem}\label{AY}\cite[Lemma 3.17]{HT}
	Suppose that $(X,f)$ is a dynamical system. Let $Y_1\subseteq Y_2\subseteq X$ be two non-empty compact $f$-invariant subsets. If for each $i\in\{1,2\},$ $(Y_i,f|_{Y_i})$ is transitive and has shadowing property, there is a regular periodic decomposition $\mathcal{D}_i=\{D_{0}^i,D_{1}^i,\dots,D_{n-1}^i\}$ for $(Y_i,f|_{Y_i})$ with $D_{j_1}\cap D_{j_2}=\emptyset$ for $0\leq j_1\neq j_2\leq n-1$, and $D_0^1\cap D_0^2\neq \emptyset,$ then we have $D_j^1\subseteq D_j^2$ for any $0\leq j\leq n-1.$
\end{Lem}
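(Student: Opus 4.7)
The plan is to first note that each piece $D_j^i$ of the decomposition is clopen in $Y_i$: since $Y_i = D_0^i \cup \cdots \cup D_{n-1}^i$ is a disjoint union of finitely many closed sets, the complement in $Y_i$ of any $D_j^i$ is a finite union of closed sets and hence closed, so $D_j^i$ is open in $Y_i$ as well. Because $f$ is a homeomorphism of $X$ and each $Y_i$ is $f$-invariant (so $f(Y_i)=Y_i$), the inclusion $f(D_j^i) \subseteq D_{j+1\,(\mathrm{mod}\, n)}^i$ coming from the definition of a periodic decomposition is in fact an equality: the two disjoint unions $\bigsqcup_j f(D_j^i)$ and $\bigsqcup_j D_{j+1}^i$ both equal $Y_i$, and the inclusion $f(D_j^i) \subseteq D_{j+1}^i$ together with disjointness forces equality piece by piece. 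Thus $f^j(D_0^i) = D_j^i$ for $i=1,2$ and every $j$.

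Consequently it suffices to show $D_0^1 \subseteq D_0^2$, since then applying the homeomorphism $f^j$ gives $D_j^1 = f^j(D_0^1) \subseteq f^j(D_0^2) = D_j^2$. For this main step, I would transfer transitivity of $f|_{Y_1}$ to transitivity of $f^n|_{D_0^1}$ as follows. Pick a point $x \in Y_1$ whose forward $f$-orbit is dense in $Y_1$, and replacing $x$ by $f^k(x)$ for an appropriate $k \in \{0,1,\dots,n-1\}$ one may assume $x \in D_0^1$. Because the $D_j^1$ are pairwise disjoint and $f^m(x) \in D_{m\,(\mathrm{mod}\, n)}^1$, an orbit point $f^m(x)$ lies in $D_0^1$ exactly when $n \mid m$. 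Any open set $U \subseteq D_0^1$ is open in $Y_1$ (by the clopen property), so the dense forward orbit of $x$ meets $U$, and the meeting time must be a multiple of $n$. Hence $\{f^{nk}(x)\}_{k \geq 0}$ is dense in $D_0^1$.

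Finally, since $Y_1 \subseteq Y_2 = \bigsqcup_{j} D_j^2$, there is a unique index $j(x)$ with $x \in D_{j(x)}^2$. Because $D_{j(x)}^2$ is $f^n$-invariant (by the cyclic permutation property established in the first paragraph applied to $\mathcal{D}_2$) and closed, the whole $f^n$-orbit of $x$ stays in $D_{j(x)}^2$, and density then yields $D_0^1 \subseteq D_{j(x)}^2$. The hypothesis $D_0^1 \cap D_0^2 \neq \emptyset$ now forces $D_{j(x)}^2 \cap D_0^2 \neq \emptyset$, and pairwise disjointness of the pieces of $\mathcal{D}_2$ forces $j(x) = 0$. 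Thus $D_0^1 \subseteq D_0^2$, and the conclusion follows by the reduction above.

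The main obstacle I anticipate is the step linking transitivity of $f|_{Y_1}$ to density of the $n$-step subsequence $\{f^{nk}(x)\}_{k \geq 0}$ in $D_0^1$; this uses essentially that $\mathcal{D}_1$ consists of disjoint clopen sets and avoids having to invoke Lemma \ref{AU} to produce an auxiliary decomposition that might a priori differ from the given $\mathcal{D}_1$. Everything else is a routine unpacking of the periodic decomposition structure together with the fact that $f$ is a homeomorphism.
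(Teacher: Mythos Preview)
The paper does not prove this lemma; it is merely quoted from \cite[Lemma 3.17]{HT}, so there is no in-paper argument to compare against. Your proof is correct, and in fact it never uses the shadowing hypothesis on $(Y_i,f|_{Y_i})$, so you are really proving a slightly stronger statement than the one recorded.

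One small caveat: you write ``because $f$ is a homeomorphism of $X$'', but the lemma as stated here only assumes that $(X,f)$ is a dynamical system, which in this paper's conventions means $f$ is continuous. What your argument actually uses is that $f|_{Y_i}$ is \emph{surjective}: this is what gives $f(D_j^i)=D_{j+1\,(\mathrm{mod}\,n)}^i$ and what ensures that $f^k(x)$ still has dense forward orbit (via $\overline{\{f^m(x):m\geq 1\}}\supseteq f(\overline{\{f^m(x):m\geq 0\}})=f(Y_1)=Y_1$). Surjectivity of $f|_{Y_i}$ follows from transitivity on the compact set $Y_i$ without any injectivity assumption: if $f(Y_i)\subsetneq Y_i$ then $V:=Y_i\setminus f(Y_i)$ is non-empty open and $f^n(U)\cap V=\emptyset$ for every non-empty open $U$ and every $n\geq 1$, contradicting transitivity. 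So the argument goes through for continuous $f$. In any case, the lemma is applied in the paper only inside the proof of Theorem~\ref{maintheorem-3}, where $f$ is assumed to be a homeomorphism, so nothing is at stake.
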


Let $k\geq 1.$ For $\eps>0$ and $n\in\mathbb{N^{+}}$,  two points $x$ and $y$ are $(n,  \eps)$-separated for $(X,f^k)$ if there is $0\leq i\leq n-1$ such that $d(f^{ik}(x),f^{ik}(y))>\eps.$
A subset $E$ is $(n,  \eps)$-separated for $(X,f^k)$ if any pair of different points of $E$ are $(n,  \eps)$-separated for $(X,f^k)$. 
\begin{Lem}\label{lemma-separation-2}
	Suppose that $(X,f)$ is a dynamical system. Let $k\geq 1.$ Then for any $\varepsilon>0,$ there exists $\varepsilon'>0$ such that if $x,y\in X$ are $(nk,\varepsilon)$-separated, then $x,y$ are $(n,\varepsilon')$-separated for $(X,f^k).$
\end{Lem}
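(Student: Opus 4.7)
The plan is a standard uniform-continuity argument, exploiting compactness of $X$ together with the fact that the exponents $r\in\{0,1,\dots,k-1\}$ form a \emph{finite} set, so the finitely many maps $f^{0},f^{1},\dots,f^{k-1}$ are all uniformly continuous on $X$. The rough idea is: if $x,y$ fail to be $(n,\varepsilon')$-separated for $f^{k}$, then for every $j\in\{0,\dots,n-1\}$ the points $f^{jk}(x),f^{jk}(y)$ are $\varepsilon'$-close, and applying a tiny power $f^{r}$ with $0\le r\le k-1$ only spreads them by at most $\varepsilon$; hence no coordinate $i=jk+r\in\{0,\dots,nk-1\}$ can witness $\varepsilon$-separation for $f$.

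First I would fix $\varepsilon>0$ and invoke uniform continuity. Because $X$ is compact, for each $r\in\{0,1,\dots,k-1\}$ there exists $\delta_{r}>0$ such that $d(a,b)<\delta_{r}$ implies $d(f^{r}(a),f^{r}(b))\le\varepsilon$. Setting
\[
\varepsilon':=\min_{0\le r\le k-1}\delta_{r}>0
\]
yields a single modulus that works simultaneously for all the relevant iterates. (When $r=0$, we just require $\delta_{0}\le\varepsilon$, which is fine.)

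Next I would argue by contrapositive. Suppose $x,y$ are not $(n,\varepsilon')$-separated for $(X,f^{k})$; that is, $d(f^{jk}(x),f^{jk}(y))\le\varepsilon'$ for every $0\le j\le n-1$. Pick an arbitrary $i$ with $0\le i\le nk-1$ and write $i=jk+r$ with $0\le j\le n-1$ and $0\le r\le k-1$. By the choice of $\varepsilon'$,
\[
d\bigl(f^{i}(x),f^{i}(y)\bigr)=d\bigl(f^{r}(f^{jk}(x)),f^{r}(f^{jk}(y))\bigr)\le\varepsilon.
\]
So no index $i$ in the block $\{0,\dots,nk-1\}$ witnesses $\varepsilon$-separation under $f$, contradicting the hypothesis that $x,y$ are $(nk,\varepsilon)$-separated. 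Hence $x,y$ must be $(n,\varepsilon')$-separated for $(X,f^{k})$.

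There is no real obstacle here; the only subtlety is to take $\varepsilon'$ as the \emph{minimum} over the finite set $\{0,1,\dots,k-1\}$ of moduli of uniform continuity, which is admissible precisely because $k$ is fixed and finite. The compactness of $X$ (used implicitly throughout the paper) is what makes the uniform continuity available in the first place.
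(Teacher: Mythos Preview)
Your argument is correct and is exactly the approach the paper takes: use uniform continuity of the finitely many iterates $f^{0},\dots,f^{k-1}$ on the compact space $X$ to produce a single $\varepsilon'$, then argue by contrapositive that failure of $(n,\varepsilon')$-separation for $f^{k}$ forces $d_{nk}(x,y)\le\varepsilon$. The only cosmetic point is the strict/non-strict bookkeeping (you apply ``$d(a,b)<\delta_r\Rightarrow\cdots$'' to points with $d\le\varepsilon'=\min\delta_r$); this is harmless---just take $\varepsilon'<\min_r\delta_r$ or phrase the modulus with a non-strict hypothesis.
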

\begin{proof}
	Since $f$ is continuous, for any $\varepsilon>0,$ there exists $\varepsilon'>0$ such that if $d(x,y)<\varepsilon',$ then one has $d(f^i(x),f^i(y))<\varepsilon$ for any $0\leq i\leq n-1.$ Then if $x,y\in X$ are $(nk,\varepsilon)$-separated, then $x,y$ are $(n,\varepsilon')$-separated for $(X,f^k).$ Otherwise, we have $d_{nk}(x,y)<\varepsilon.$
\end{proof}

\subsubsection{Proof of Theorem \ref{maintheorem-3}(1)}
Since $G_{K}\cap U\cap Trans\subseteq G_K$ and $h_{top}(f,G_K)\leq\inf\{h_\mu (f):\mu\in K\}$ by Lemma \ref{lem-Bowen},   one has 
\begin{equation*}
	h_{top}(f,G_{K}\cap U\cap Trans)\leq \inf\{h_\mu (f):\mu\in K\}.
\end{equation*}
So it remains to show that
\begin{equation*}
	h_{top}(f,G_{K}\cap U\cap Trans)\geq \inf\{h_\mu (f):\mu\in K\}.
\end{equation*}

Since $\mathrm{Per}(f|_{X_{1}})$ $\neq \emptyset,$ there is a periodic point $p_{0}$ in $X_1$ with period $Q$ for some $Q\in\mathbb{N^{+}}.$
By Lemma \ref{AU}, for any $l\in \mathbb{N^{+}}$, there is a periodic decomposition $\mathcal{D}_{l}=\{D^{l}_{0},D^{l}_{1},\dots,D^{l}_{n_{l}-1}\}$ for some $n_{l}$ dividing $Q$ such that $D^{l}_{i}\cap D^{l}_{j}$ is empty for $0\leq i\neq j\leq n_l$, $f^{n_{l}}$ is mixing on each $D^{l}_{i}$ and $f^{n_{l}}|_{D^{l}_{i}}$ has shadowing property for any $i \in \{0,1,\dots,n_{l}-1\}$. Using the pigeon-hole principle we can assume that there exists $k$ dividing $Q$ such that $n_{l}=k$ for any $l \in \mathbb{N^{+}}.$
We can also assume that $p_{0} \in D_{0}^{l}$ for any $l \in \mathbb{N^{+}}$. Then $f^{i}(p_{0}) \in D_{i}^{l}$ for any $l \in \mathbb{N^{+}}$ and $i \in \{0,1,\dots,k-1\}$.
By Lemma \ref{AY}, we have
$D_{i}^{l} \subseteq D_{i}^{l+1}$ for any $l \in \mathbb{N^{+}}$, $i \in \{0,1,\dots,k-1\}$.
Let $Y_{i}=\overline{\bigcup_{l\geq 1}D_{i}^{l}}$ for any $i \in \{0,1,\dots,k-1\}$. Then $X=\bigcup_{0\leq i \leq k-1}Y_{i}$ and $f^{j}(Y_{i})=Y_{i+j\ (\mathrm{mod} \ n)}$. For any $l \in \mathbb{N^{+}},$ $f^{k}$ is mixing on each $D^{l}_{0}$ and $f^{k}|_{D^{l}_{0}}$ has shadowing property, so $(D_{0}^{l},f^{k}|_{D_{0}^{l}})$ has specification property by Proposition \ref{prop-specif}.

For any $\mu \in \mathcal{M}(X_{l})$ and $l \in \mathbb{N^{+}}$, define $h_{*}^{l}(\mu) \in \mathcal{M}(D_{0}^{l})$ by: $h_{*}^{l}(\mu)(A)=\mu (A \cup f(A) \cup \dots \cup f^{k-1}(A))$, where $A$ ia a Borel set of $D_{0}^{l}$. By \cite[Proposition 23.17]{Sig}, $h_{*}^{l}$ is a homeomorphism from $\mathcal{M}_{f|_{X_{l}}}(X_{l})$ onto $\mathcal{M}_{f^{k}|_{D_{0}^{l}}}(D_{0}^{l})$ and $(h_{*}^{l})^{-1}(\nu)=\frac{1}{k}(\nu + f_{*}\nu +\dots + f^{k-1}_{*}\nu) \in \mathcal{M}_{f|_{X_{l}}}(X_{l})$ for any $\nu \in \mathcal{M}_{f^{k}|_{D_{0}^{l}}}(D_{0}^{l})$ where $f_*\nu(B)=\nu(f^{-1}(B))$ for any Borel set $B$.
Since $h^{l+1}_{*}|_{\mathcal{M}_{f|_{X_{l}}}(X_{l})}=h^{l}_{*}$ for any $l \in \mathbb{N^{+}}$, we can define $h_{*}$ in $\bigcup_{l\geq 1}\mathcal{M}_{f|_{X_{l}}}(X_l)$ such that $h_{*}|_{\mathcal{M}_{f|_{X_{l}}}(X_{l})}=h^{l}_{*}$. Then $h_{*}$ is a homeomorphism from $\bigcup_{l\geq 1}\mathcal{M}_{f|_{X_{l}}}(X_l)$ onto $\bigcup_{l\geq 1}\mathcal{M}_{f^{k}|_{D_{0}^{l}}}(D_{0}^{l})$, and for any $\nu\in \bigcup_{l\geq 1}\mathcal{M}_{f^{k}|_{D_{0}^{l}}}(D_{0}^{l}),$ we have 
\begin{equation}\label{BC}
	\begin{split}
		kh_{(h_{*}^{l})^{-1}(\nu)}(f)&=h_{(h_{*}^{l})^{-1}(\nu)}(f^k)\\
		&=h_{\frac{1}{k}(\nu + f_{*}\nu +\dots + f^{k-1}_{*}\nu)}(f^k)\\
		&=\frac{1}{k}(h_{\nu}(f^k)+h_{f_{*}\nu}(f^k)+\dots +h_{f^{k-1}_{*}\nu}(f^k))\\
		&=\frac{1}{k}(kh_{\nu}(f^k))\\
		&=h_{\nu}(f^k)
	\end{split}
\end{equation}

Let $\eta>0.$ Since $(X,f)$ has uniform separation property, by Lemma \ref{lem-n-eps} there exists $\eps^*>0$ so that for $\mu$ ergodic and any neighbourhood $F\subseteq \cM(X)$ of $\mu$,   there exists $n_{F,  \mu,  \eta}^*\in\N$ such that for any $n\geq n_{F,  \mu,  \eta}^*$,   there is a $(n,  \eps^*)$-separated set $\Gamma_n\subseteq X_{n,  F}\cap S_\mu$ with
\begin{equation}\label{BA}
	|\Gamma_n|\geq e^{n(h_{\mu}(f)-\frac{\eta}{2k})}.
\end{equation}
Take $\tilde{n}_{F,  \mu,  \eta}^*>n_{F,  \mu,  \eta}^*$ large enough such that for any $n\geq \tilde{n}_{F,  \mu,  \eta}^*$ we have
\begin{equation}\label{BE}
	\frac{1}{k}e^{n(h_{\mu}(f)-\frac{\eta}{2k})}\geq e^{(n+k)(h_{\mu}(f)-\frac{\eta}{k})}.
\end{equation}
If $\mu\in \mathcal{M}_{f|_{X_{l}}}(X_l)$ for some $l\geq 1,$ then $\Gamma_n\subseteq X_{n,  F}\cap S_\mu\subseteq X_{n,  F}\cap X_l.$  Using the pigeon-hole principle, for any $n\geq \tilde{n}_{F,  \mu,  \eta}^*$ there exists $\tilde{\Gamma}_n\subseteq \Gamma_n$ and $0\leq i\leq k-1$ such that $\tilde{\Gamma}_n\subseteq D_i^l$ and  
\begin{equation}\label{BH}
	|\tilde{\Gamma}_n|\geq \frac{1}{k}e^{n(h_{\mu}(f)-\frac{\eta}{2k})}.
\end{equation}
Let $\tilde{\tilde{\Gamma}}_n=\{f^{-i}(x):x\in \tilde{\Gamma}_n\}.$ Then $\tilde{\tilde{\Gamma}}_n\subseteq D_0^l$ is a $(n+k,  \eps^*)$-separated set and  
\begin{equation*}
	|\tilde{\tilde{\Gamma}}_n|=	|\tilde{\Gamma}_n|\geq e^{(n+k)(h_{\mu}(f)-\frac{\eta}{k})}.
\end{equation*}
by (\ref{BE}) and (\ref{BH}). Let $\tilde{\tilde{n}}_{F,  \mu,  \eta}^*=\tilde{n}_{F,  \mu,  \eta}^*+k.$ Then for any $n\geq \tilde{\tilde{n}}_{F,  \mu,  \eta}^*$,   there is a $(n,  \eps^*)$-separated set $\tilde{\tilde{\Gamma}}_n\subseteq X_{n,  F}\cap S_\mu\cap D_0^l$ with
\begin{equation}\label{BI}
	|\tilde{\tilde{\Gamma}}_n|\geq e^{n(h_{\mu}(f)-\frac{\eta}{k})}.
\end{equation}
Since $h_{*}$ is continuous and $h_*(\frac{1}{nk}\sum_{j=0}^{nk-1}\delta_{f^{j}(x)})=\frac{1}{n}\sum_{j=0}^{n-1}\delta_{f^{jk}(x)}$ for any $x\in D_0^l,$ then
for any $\nu\in \mathcal{M}_{f^{k}|_{D_{0}^{l}}}(D_{0}^{l}),$ and any neighbourhood $F'\subseteq \cM(Y_0)$ of $\nu,$ there is a neighbourhood $F\subseteq \cM(X)$ of $(h_{*})^{-1}(\nu)$ such that if $x\in D_0^i$ and $\frac{1}{nk}\sum_{j=0}^{nk-1}\delta_{f^{j}(x)}\in F,$ then $\frac{1}{n}\sum_{j=0}^{n-1}\delta_{f^{jk}(x)}\in F'.$ Then by (\ref{BC}), (\ref{BI}) and Lemma \ref{lemma-separation-2} there exists $\eps'>0$ so that for $\nu\in \bigcup_{l\geq 1}\mathcal{M}_{f^{k}|_{D_{0}^{l}}}(D_{0}^{l})$ ergodic and any neighbourhood $F'\subseteq \cM(Y_0)$ of $\nu$,   there exists $n_{F',  \nu,  \eta}'\in\N$ such that for any $n\geq n_{F',  \nu,  \eta}'$,   there is a $(n,  \eps')$-separated set $\Gamma_n'\subseteq X_{n,  F'}\cap S_\nu\cap (\bigcup_{l\geq 1}D_0^l)$ for $(X,f^k)$ with
\begin{equation}\label{BB}
	|\Gamma_n'|\geq e^{nk(h_{(h_{*}^{l})^{-1}(\nu)}(f)-\frac{\eta}{k})} = e^{n(h_{\nu}(f^k)-\eta)}.
\end{equation}

By Lemma \ref{lemma-A},  there exists a sequence $\left\{\alpha_{1}, \alpha_{2}, \cdots\right\}$ in $K$ such that
$$
\overline{\left\{\alpha_{j}: j \in \mathbb{N}^+, j>n\right\}}=K, \forall n \in \mathbb{N}^+ \text { and } \lim _{j \rightarrow \infty} d\left(\alpha_{j}, \alpha_{j+1}\right)=0.
$$
For any $j \in \mathbb{N}^+$, by Lemma \ref{lemma-MM}, there exists $\beta_j\in \cM_{f|_{X_{l_k}}}(X_{l_j})$ for some $l_j \in \mathbb{N^{+}}$ such that $d(\beta_j,\alpha_j)<\varepsilon_{k}$ and $h_{\beta_j}(f)\geq h_{\alpha_j}(f)-\frac{\eta}{k}.$
By applying Proposition \ref{proposition of entropy-dense property} to $(X_{l_j},f)$, there exists $\gamma_j\in \cM^e_{f|_{X_{l_j}}}(X_{l_j})$ such that $d(\gamma_j,\beta_j)<\varepsilon_{j}$ and $h_{\gamma_j}(f)\geq h_{\beta_j}(f)-\frac{\eta}{k}.$ Then we have
\begin{equation}\label{BJ}
	\overline{\left\{\gamma_{j}: j \in \mathbb{N}^+, j>n\right\}}=\overline{\left\{\alpha_{j}: j \in \mathbb{N}^+, j>n\right\}}=K, \forall n \in \mathbb{N}^+\text { and } \lim _{j \rightarrow \infty} d\left(\gamma_{j}, \gamma_{j+1}\right)=0.
\end{equation}
For any $j \in \mathbb{N}^+,$ let $\mu_j=h_{*}(\gamma_j)\in\mathcal{M}_{f^{k}|_{D_{0}^{l_j}}}(D_{0}^{l_j}).$

For any non-empty open set $U\subseteq X$, there exists $i_{0} \in \{0,1,\dots,k-1\}$ such that $U \cap Y_{i_{0}} \neq \emptyset$. Then $f^{-i_{0}}(U)\cap Y_{0}\neq \emptyset$. Let $V=f^{-i_{0}}(U)\cap Y_{0}$, then $V$ is any non-empty open set for $Y_{0}$.

Using similar construction of Theorem \ref{maintheorem-2}(1) considering $(Y_{0},f^{k}|_{Y_0})$,  there exists $G\subseteq V$ such that
\begin{description}
	\item[(a)] $h_{top}(f^k,G)\geq \inf\{h_{\mu_j}(f^k):j\in\mathbb{N^{+}}\}-2\eta;$
	
	\item[(b)] for any $z\in G,$ $\{f^{ik}(z):i\in \mathbb{N}\}$ is dense in $Y_{0};$
	
	\item[(c)] for any $z\in G,$ two sequences $\{\mathcal{E}_{mk}(z)\}_{m=1}^{\infty},$ $\{\mu_{m}\}_{m=1}^{\infty}$ have the same limit-point set. More precisely.
	\begin{description}
		\item[(c1)] For any sequence $\{\chi_{m}\}_{m=1}^{\infty} \subseteq \mathbb{N}$, there is a sequence $\{n_{m}\}_{m=1}^{\infty}\subseteq \mathbb{N}$ such that $$\lim_{m\to \infty }d(\frac{1}{n_{m}}\sum_{j=0}^{n_{m}-1}\delta_{f^{jk}(z)},\mu_{\chi _{m}})=0;$$
		\item[(c2)] For any $\{n_{m}\}_{m=1}^{\infty} \subseteq \mathbb{N}$, there exists a sequence $\{\chi_{m}\}_{m=1}^{\infty} \subseteq \mathbb{N}$ such that $$\lim_{m\to \infty }d(\frac{1}{n_{m}}\sum_{j=0}^{n_{m}-1}\delta_{f^{jk}(x)},\mu_{\chi _{m}})=0.$$
	\end{description}
\end{description}
Then,
\begin{description}
	\item[(A)] by Proposition \ref{prop-AA}(2) and (\ref{BC}), we have
	\begin{equation}
		\begin{split}
			h_{top}(f,G)&=\frac{1}{k}h_{top}(f^k,G)\\
			&\geq \inf\{\frac{1}{k}h_{\mu_j}(f^k):j\in\mathbb{N^{+}}\}-\frac{2\eta}{k}\\
			&= \inf\{h_{\gamma_j}(f):j\in\mathbb{N^{+}}\}-\frac{2\eta}{k}\\
			&\geq \inf\{h_{\alpha_j}(f):j\in\mathbb{N^{+}}\}-\frac{4\eta}{k}\\
			&\geq\inf\{h_{\mu}(f):\mu\in K\}-\frac{4\eta}{k}.
		\end{split}
	\end{equation}
	
	\item[(B)] for any $z\in G,$ $\{f^{i}(z):i\in \mathbb{N}\}$ is dense in $X$.
	
	\item[(C)] for any $z\in G,$ two sequences $\{\mathcal{E}_{m}(z)\}_{m=1}^{\infty},$ $\{\gamma_{m}\}_{m=1}^{\infty}$ have the same limit-point set. This is obtained by the following:
	\begin{description}
		\item[(C1)] for any limit-point $\gamma$ of $\{\gamma_{m}\}_{m=1}^{\infty}$, there is a sequence $\{\chi_{m}\}_{m=1}^{\infty} \subseteq \mathbb{N}$ such that $$\lim\limits_{m\to\infty }d(\gamma_{\chi _{m}},\gamma)=0.$$ For the $\{\chi_{m}\}_{m=1}^{\infty}$, by item (c1) there is a sequence $\{n_{m}\}_{m=1}^{\infty}\subseteq \mathbb{N}$ such that $$\lim_{m\to \infty }d(\frac{1}{n_{m}}\sum_{j=0}^{n_{m}-1}\delta_{f^{jk}(z)},\mu_{\chi _{m}})=0.$$  Then $$\lim_{m\to\infty }d(\frac{1}{n_{m}}\sum_{j=0}^{n_{m}-1}\delta_{f^{jk+s}(z)},f_{*}^{s}\mu_{\chi _{m}})=\lim_{m\rightarrow \infty }d(f_{*}^{s}(\frac{1}{n_{m}}\sum_{j=0}^{n_{m}-1}\delta_{f^{jk}(z)}),f_{*}^{s}\mu_{\chi _{m}})=0$$ for any $s \in \{0,1,\dots,k-1\}$. Thus one has $$\lim_{m\rightarrow \infty }d(\frac{1}{n_{m}k}\sum_{j=0}^{n_{m}k-1}\delta_{f^{j}(z)},\gamma_{\chi _{m}})=\lim_{m\rightarrow \infty }d(\frac{1}{n_{m}k}\sum_{j=0}^{n_{m}k-1}\delta_{f^{j}(z)},\frac{1}{k}\sum_{i=0}^{k-1}f^{i}_{*}\mu_{\chi _{m}})=0.$$ So $\lim\limits_{m\to\infty }d(\frac{1}{n_{m}k}\sum\limits_{j=0}^{n_{m}k-1}\delta_{f^{j}(z)},\gamma)=0.$
		\item[(C2)]  for any limit-point $\nu$ of $\{\mathcal{E}_{m}(z)\}_{m=1}^{\infty},$ there is a sequence $\{N_{m}\} \subseteq \mathbb{N}$ such that $$\lim\limits_{m\rightarrow \infty }d(\frac{1}{N_{m}}\sum_{j=0}^{N_{m}-1}\delta_{f^{j}(z)},\nu)=0.$$ For the $\{N_{m}\}_{m=1}^{\infty},$ there exist $\{n_{m}\}_{m=1}^{\infty}$ and $\{r_{m}\}_{m=1}^{\infty}$ where $r_{m}\in \{0,1,\dots,k-1\}$ such that $N_{m}=n_{m}k+r_{m}$. Then $\lim\limits_{m\rightarrow \infty }d(\frac{1}{n_{m}k}\sum\limits_{j=0}^{n_{m}k-1}\delta_{f^{j}(z)},\frac{1}{N_{m}}\sum\limits_{j=0}^{N_{m}-1}\delta_{f^{j}(z)})=0$ by Lemma \ref{measure distance}. By the item (c2) there exists a sequence $\{\chi_{m}\} \subseteq \mathbb{N}$ such that $\lim\limits_{m\rightarrow \infty }d(\frac{1}{n_{m}}\sum\limits_{j=0}^{n_{m}-1}\delta_{f^{jk}(z)},\mu_{\chi _{m}})=0$. Then $$\lim\limits_{m\rightarrow \infty }d(\frac{1}{n_{m}k}\sum_{j=0}^{n_{m}k-1}\delta_{f^{j}(z)},\gamma_{\chi _{m}})=\lim_{m\rightarrow \infty }d(\frac{1}{n_{m}k}\sum_{j=0}^{n_{m}k-1}\delta_{f^{j}(z)},\frac{1}{k}\sum_{i=0}^{k-1}f^{i}_{*}\mu_{\chi _{m}})=0.$$ 
		So $\lim\limits_{m\rightarrow \infty }d(\gamma_{\chi _{m}},\nu)=0.$
	\end{description}
\end{description}
By item (C) and (\ref{BJ}), we have $G\subseteq G_K.$ Let $\tilde{G}=\{f^{i_{0}}(z):z\in G\}$, then $\tilde{G}\subseteq G_{K}\cap Trans \cap U$ and $h_{top}(f,\tilde{G})=h_{top}(f,G)\geq\inf\{h_{\mu}(f):\mu\in K\}-\frac{4\eta}{k}$ by Prposition \ref{prop-AA} (1). 
By the arbitrariness of $\eta,$ this yields
$$h_{top}(f,G_{K}\cap U\cap Trans)\geq \inf\{h_\mu (f):\mu\in K\}$$
which completes the proof of Theorem \ref{maintheorem-3}(1).\qed

\subsection{Proof of Theorem \ref{maintheorem-3}(2)}
Since $K\subseteq  \cM_{f|_{X_{n_0}}}(X_{n_0})$ is a non-empty compact connected set, by Lemma \ref{lemma-A},  there exists a sequence $\left\{\beta_{1}, \beta_{2}, \cdots\right\}$ in $K$ such that
$$
\overline{\left\{ \beta_{j}: j \in \mathbb{N}^+, j>n\right\}}=K, \forall n \in \mathbb{N}^+ \text { and } \lim _{j \rightarrow \infty} d\left( \beta_{j},  \beta_{j+1}\right)=0.
$$
Note that $(X_{n_0},f|_{X_{n_0}})$ has uniform separation property and $\beta_j\in \cM_{f|_{X_{n_0}}}(X_{n_0})$ for any $j\in\mathbb{N^{+}}.$
Then using the same method as Theorem \ref{maintheorem-3}(1) for $\left\{\beta_{1}, \beta_{2}, \cdots\right\}$, we have $$G_{K}\cap U\cap Trans\neq\emptyset,\text{ and }h_{top}(f,G_{K}\cap U\cap Trans\})=\inf\{h_{\mu}(f):\mu\in K\},$$ which completes the proof of Theorem \ref{maintheorem-3}(2). \qed

\section{Homoclinic class and hyperbolic ergodic measure: proofs of Theorem \ref{maintheorem-1'} and \ref{maintheorem-1}}\label{section-4}

\subsection{Some abtract results}
First, we introduct some abtract results for possibly more applications.

\begin{Def}
	Given a dynamical system $(X,f)$. We say $S\subseteq X$ is periodic if there is $x\in Per(f)$ such that  $S=\{f^{i}(x): i \in \mathbb{N} \}$. 
\end{Def}
\begin{Lem}\label{BD}\cite[Lemma 4.3]{HT}
	Suppose that a dynamical system $(X,f)$ with $Per(f)\neq \emptyset$ is transitive and has the
	shadowing property. If $X$ is not periodic, then $h_{top}(f,X)>0$.
\end{Lem}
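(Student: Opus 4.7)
The plan is to leverage the periodic decomposition from Lemma \ref{AU} in order to reduce the problem to a mixing, shadowing subsystem that consequently enjoys the specification property, and then to produce exponentially many separated orbits on that subsystem by a standard specification argument.

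First, since $(X,f)$ is transitive with shadowing property and $\Per(f)\neq\emptyset$, Lemma \ref{AU} supplies a regular periodic decomposition $\mathcal{D}=\{D_0,D_1,\dots,D_{k-1}\}$ with pairwise disjoint elements such that $f^{k}$ is mixing on each $D_i$ and $f^{k}|_{D_i}$ has the shadowing property. Applying Proposition \ref{prop-specif}, the subsystem $(D_0,f^{k}|_{D_0})$ has the specification property.

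Next, I would argue that $D_0$ contains at least two points. If $D_0$ were a singleton, then since $f(D_i)\subseteq D_{i+1(\mathrm{mod}\,k)}$ and the $D_i$ are closed, each $D_i$ would be a single point and $X=\bigcup_{i=0}^{k-1}D_i$ would be a single periodic orbit, contradicting the assumption that $X$ is not periodic. Hence there exist $x,y\in D_0$ with $d(x,y)=3\varepsilon>0$ for some $\varepsilon>0$, and in particular $x,y$ are $(1,2\varepsilon)$-separated under $f^{k}$.

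The heart of the argument is a standard specification construction on $(D_0,f^{k})$. Let $K=K_\varepsilon$ be the gap constant from Definition \ref{definition of specification} for $(D_0,f^{k})$ and $\varepsilon$. For each word $\omega=(\omega_1,\dots,\omega_N)\in\{x,y\}^{N}$, specification produces a point $z_\omega\in D_0$ such that $d\bigl((f^{k})^{j(K+1)}(z_\omega),\omega_j\bigr)\le\varepsilon$ for every $1\le j\le N$. Any two distinct words $\omega\neq\omega'$ differ in some coordinate, so $z_\omega$ and $z_{\omega'}$ are $(N(K+1),\varepsilon)$-separated for $f^{k}$; this yields $2^{N}$ many $(N(K+1),\varepsilon)$-separated points in $D_0$, giving
\[
h_{top}(f^{k},D_0)\ge \frac{\log 2}{K+1}>0.
\]
Finally, by monotonicity of Bowen entropy and Proposition \ref{prop-AA}(2),
\[
h_{top}(f,X)\;\ge\;h_{top}(f,D_0)\;=\;\frac{1}{k}\,h_{top}(f^{k},D_0)\;>\;0,
\]
which completes the proof.

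The only non-routine step is verifying that $D_0$ is not a singleton; everything else is a textbook specification-to-entropy construction combined with the pre-established decomposition lemma, so I do not expect a real obstacle. I would, however, need to be careful that the Bowen entropy on $D_0$ (which is only $f^{k}$-invariant, not $f$-invariant) behaves correctly under the relation $h_{top}(f^{k},D_0)=k\,h_{top}(f,D_0)$, which is exactly Proposition \ref{prop-AA}(2) and requires no additional invariance hypothesis.
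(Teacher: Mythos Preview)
The paper does not actually prove this lemma; it is quoted verbatim from \cite{HT} (as Lemma~4.3 there) and used as a black box. So there is no in-paper argument to compare against, and your proposal stands on its own.

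Your strategy is sound and would be a natural way to prove the statement: pass to a mixing, shadowing piece $(D_0,f^{k})$ via Lemma~\ref{AU}, upgrade to specification by Proposition~\ref{prop-specif}, and then run the classical two-symbol separated-set construction. The entropy bookkeeping through Proposition~\ref{prop-AA}(2) is correct and, as you note, needs no invariance of $D_0$ under $f$.

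One step needs a better justification. You write that if $D_0$ were a singleton then ``since $f(D_i)\subseteq D_{i+1(\mathrm{mod}\,k)}$ and the $D_i$ are closed, each $D_i$ would be a single point.'' The inclusion $f(D_i)\subseteq D_{i+1}$ alone does not force this for a non-injective $f$. The claim is nevertheless true: if $D_0=\{p\}$ then $f$ collapses all of $D_{k-1}$ to $p$, so $f^{k}|_{D_{k-1}}$ has a one-point image; combined with the mixing of $(D_{k-1},f^{k})$ this forces $D_{k-1}$ to be a singleton, and one inducts backwards. Alternatively and more simply, since $X$ is transitive, not periodic, and admits the disjoint decomposition from Lemma~\ref{AU}, it must be infinite, so some $D_j$ has at least two points; just run your construction on that $D_j$ instead of $D_0$. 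With either fix, the proof goes through.
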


\begin{Prop}\label{BG}\cite[Proposition 4.6]{HT}
	Suppose that $(X,f)$ is a dynamical system with a sequence of nondecreasing $f$-invariant compact subsets $\{X_{n} \subseteq X:n \in \mathbb{N^{+}} \}$ such that $\overline{\bigcup_{n\geq 1}X_{n}}=X$, $({X_{n}},f|_{X_{n}})$ has shadowing property and is transitive for any $n \in \mathbb{N^{+}}$, and $\mathrm{Per}(f|_{X_{1}})\neq \emptyset$. 
	Then $(X,f)$ has measure $\nu$ with full support $($i.e. $S_\nu=X$$)$ and $\nu(\bigcup_{n\geq 1}X_{n})=1$. Moreover, the set of such measures is dense in $\overline{\{\mu \in \mathcal{M}_{f}(X):\mu(\bigcup_{n\geq 1}X_{n})=1\}}$.
\end{Prop}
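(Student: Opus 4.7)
The plan is first to construct an explicit full-support measure $\nu$ concentrated on $\bigcup_{n\ge 1}X_n$, and then to obtain density by convex-combining an arbitrary measure in the target set with a small amount of $\nu$. This is a standard recipe: once a single measure with the required properties is in hand, density inside the closure follows from a weak-$*$ perturbation argument, so the whole difficulty is packed into producing $\nu$.

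For the construction I would first establish that periodic points are dense in each $X_n$. Since $\mathrm{Per}(f|_{X_1})\neq\emptyset$ and $X_1\subseteq X_n$, the subsystem $(X_n,f|_{X_n})$ is transitive, has the shadowing property, and contains at least one periodic point. Applying Lemma \ref{AU} decomposes $X_n$ into cyclically permuted clopen pieces on which a suitable iterate of $f$ is mixing with shadowing, hence by Proposition \ref{prop-specif} satisfies the specification property on each piece; the classical consequence that specification forces density of periodic orbits then yields $\overline{\mathrm{Per}(f|_{X_n})}=X_n$. Now fix a countable dense set $\{p_{n,i}\}_{i\ge 1}\subseteq \mathrm{Per}(f|_{X_n})$, let $\mu_{n,i}$ be the uniform measure on the orbit of $p_{n,i}$, and define
$$
\nu_n:=\sum_{i=1}^{\infty}2^{-i}\mu_{n,i}\in\mathcal{M}_f(X_n),\qquad \nu:=\sum_{n=1}^{\infty}2^{-n}\nu_n.
$$
Each $\nu_n$ is $f$-invariant and satisfies $S_{\nu_n}\supseteq\overline{\{p_{n,i}\}_i}=X_n$, so $\nu\in\mathcal{M}_f(X)$ has $S_\nu\supseteq\overline{\bigcup_{n\ge 1}X_n}=X$ and $\nu(\bigcup_{n\ge 1}X_n)=1$.

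For the density statement, let $\mu\in\overline{\{\tilde\mu\in\mathcal{M}_f(X):\tilde\mu(\bigcup_{n\ge 1}X_n)=1\}}$ and fix $\delta>0$. Choose $\mu'$ inside the set with $d(\mu',\mu)<\delta/2$ and, for small $\epsilon>0$, set $\mu_\epsilon:=(1-\epsilon)\mu'+\epsilon\nu$. Then $\mu_\epsilon\in\mathcal{M}_f(X)$, $\mu_\epsilon(\bigcup_{n\ge 1}X_n)=1$, and $S_{\mu_\epsilon}\supseteq S_\nu=X$; moreover $d(\mu_\epsilon,\mu')\to 0$ as $\epsilon\to 0$, so $d(\mu_\epsilon,\mu)<\delta$ for $\epsilon$ small. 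I expect the main obstacle to be precisely the density of periodic points in each $X_n$; everything else (invariance and support computation for the weighted averages, and the perturbation bound) is direct. The decomposition lemma plus the route mixing + shadowing $\Rightarrow$ specification $\Rightarrow$ dense periodic orbits is the essential mechanism, and it is exactly where the hypotheses $\mathrm{Per}(f|_{X_1})\neq\emptyset$, transitivity, and shadowing on each $X_n$ are used together.
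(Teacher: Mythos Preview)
The paper does not prove this proposition; it is quoted verbatim from \cite[Proposition 4.6]{HT}, so there is no in-paper argument to compare against. Judged on its own merits, your proposal is correct and is the standard route one would expect: use Lemma~\ref{AU} and Proposition~\ref{prop-specif} to pass from transitivity plus shadowing plus a periodic point to specification on the cyclic pieces of each $X_n$, invoke the classical fact that specification forces density of periodic points (cf.\ \cite{Sig}), average the resulting periodic-orbit measures to obtain a full-support $\nu_n\in\mathcal{M}_{f|_{X_n}}(X_n)$, and then take a weighted series $\nu=\sum_n 2^{-n}\nu_n$. The density step via $(1-\epsilon)\mu'+\epsilon\nu$ is immediate. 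The only place requiring care is exactly the one you flag: producing dense periodic points in each $X_n$; your chain of reductions handles this correctly, and the remark that periodic points of $f^k|_{D_i}$ are automatically periodic for $f$ closes the loop.
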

Let $C^{0}(X,\mathbb{R})$ denote the space of real continuous functions on $X$ with the norm $||f||:=\sup_{x\in X}|f(x)|.$
For any $\varphi\in C^{0}(X,\mathbb{R})$, define the $\varphi$-irregular set as
\begin{equation*}
	I_{\varphi}(f) := \left\{x\in X: \lim_{n\to\infty}\frac1n\sum_{i=0}^{n-1}\varphi(f^i(x)) \,\, \text{ diverges }\right\}.
\end{equation*}
It's easy to see that $IR(f)=\cup_{\varphi\in C^{0}(X,\mathbb{R})}I_{\varphi}(f).$

\begin{Prop}\label{proposition-AD}\cite[Proposition 4.9]{HT}
	Suppose that $(X,f)$ is a dynamical system. Let $Y\subseteq X$ be a non-empty compact $f$-invariant set. If there is $\phi_0\in C^{0}(Y,\mathbb{R})$ such that $I_{\phi_0}(f|_{Y})\neq\emptyset,$ the set $\mathcal C^*:=\{\phi\in C^{0}(X,\mathbb{R}):I_\phi(f)\cap Y\neq\emptyset\}$ is an open and dense subset in $C^{0}(X,\mathbb{R})$.
\end{Prop}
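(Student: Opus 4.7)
\textbf{Proof plan for Proposition \ref{proposition-AD}.}

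The plan is to reformulate the irregular set in terms of the accumulation measures $V_f(\cdot)$, then handle openness by a direct sup-norm perturbation estimate, and handle density by a one-parameter affine perturbation using a Tietze extension of $\phi_0$. The key observation I will use throughout is that $\frac{1}{n}\sum_{i=0}^{n-1}\varphi(f^i(x))=\int\varphi\,d\mathcal{E}_n(x)$, so by continuity of $\mu\mapsto\int\varphi\,d\mu$ on $\mathcal{M}(X)$, a point $x\in X$ lies in $I_\varphi(f)$ iff there exist $\mu_1,\mu_2\in V_f(x)$ with $\int\varphi\,d\mu_1\neq\int\varphi\,d\mu_2$. In particular, fixing a point $y\in I_{\phi_0}(f|_Y)$ (which exists by hypothesis), we obtain $\mu_1,\mu_2\in V_{f|_Y}(y)\subseteq V_f(y)\cap\mathcal{M}(Y)$ with $B:=\int\phi_0\,d\mu_1-\int\phi_0\,d\mu_2\neq 0$.

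For openness, let $\phi\in\mathcal{C}^*$, pick $x\in I_\phi(f)\cap Y$, and choose $\nu_1,\nu_2\in V_f(x)$ with $2\delta:=|\int\phi\,d\nu_1-\int\phi\,d\nu_2|>0$. For any $\psi\in C^0(X,\mathbb{R})$ with $\|\psi-\phi\|_\infty<\delta/2$ we get $|\int\psi\,d\nu_i-\int\phi\,d\nu_i|<\delta/2$ for $i=1,2$, so $\int\psi\,d\nu_1\neq\int\psi\,d\nu_2$, hence $x\in I_\psi(f)\cap Y$ and $\psi\in\mathcal{C}^*$. Thus an open ball of radius $\delta/2$ around $\phi$ lies in $\mathcal{C}^*$.

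For density, given $\phi\in C^0(X,\mathbb{R})$ and $\varepsilon>0$, apply the Tietze extension theorem to produce $\widetilde{\phi}_0\in C^0(X,\mathbb{R})$ with $\widetilde{\phi}_0|_Y=\phi_0$. Because $\mu_1,\mu_2$ are supported in $Y$, we still have $\int\widetilde{\phi}_0\,d\mu_1-\int\widetilde{\phi}_0\,d\mu_2=B\neq 0$. Consider the one-parameter family $\psi_t:=\phi+t\widetilde{\phi}_0$ and compute
\[
\int\psi_t\,d\mu_1-\int\psi_t\,d\mu_2=A+tB,\qquad A:=\int\phi\,d\mu_1-\int\phi\,d\mu_2.
\]
This vanishes at the single value $t_0=-A/B$. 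Picking any $t\in\mathbb{R}\setminus\{t_0\}$ with $|t|<\varepsilon/(\|\widetilde{\phi}_0\|_\infty+1)$ gives $\|\psi_t-\phi\|_\infty<\varepsilon$ and simultaneously $y\in I_{\psi_t}(f)\cap Y$, so $\psi_t\in\mathcal{C}^*$. This proves that every $\phi$ can be approximated arbitrarily well, so $\mathcal{C}^*$ is dense.

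There is no real obstacle in this argument; the only point that requires noting is that $\phi_0$ lives on $Y$ rather than $X$, which is dispatched by Tietze. The whole proof amounts to the two observations that (i) divergence of Birkhoff averages is captured by two distinct barycenters of measures in $V_f(\cdot)$, a condition stable under $C^0$-perturbations, and (ii) adding a small multiple of an extension of $\phi_0$ breaks the possible degeneracy $A+tB=0$ except at one value of $t$.
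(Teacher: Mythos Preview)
Your argument is correct. The paper does not actually supply a proof of this proposition; it is quoted verbatim from \cite[Proposition 4.9]{HT}, so there is no in-paper proof to compare against. Your approach---recasting $I_\varphi(f)$ via pairs of measures in $V_f(\cdot)$ separated by $\varphi$, then handling openness by a direct $C^0$ perturbation estimate and density by an affine perturbation $\phi+t\widetilde{\phi}_0$ with $\widetilde{\phi}_0$ a Tietze extension of $\phi_0$---is the natural and standard route, and every step is justified. The only point worth making explicit (you implicitly use it) is that for $y\in Y$ one has $V_{f|_Y}(y)=V_f(y)$ because $Y$ is compact and $f$-invariant, so the empirical measures and their weak$^*$ accumulation points coincide whether computed in $\mathcal{M}(Y)$ or $\mathcal{M}(X)$; this is what guarantees $\mu_1,\mu_2\in V_f(y)$ and that they are supported in $Y$.
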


\begin{maintheorem}\label{maintheorem-6}
	Suppose that $f$ is a homeomorphism from $X$ onto $X$, $(X,f)$ has a sequence of nondecreasing $f$-invariant compact subsets $\{X_{n} \subseteq X:n \in \mathbb{N^{+}} \}$ such that $\overline{\bigcup_{n\geq 1}X_{n}}=X$, $\mathrm{Per}(f|_{X_{1}})$ $\neq \emptyset,$ $({X_{n}},f|_{X_{n}})$ has shadowing property and is transitive for any $n \in \mathbb{N^{+}}.$ If $X_{n_0}$ is not periodic and $(X_{n_0},f|_{X_{n_0}})$ has uniform separation property for some ${n_0}\in\mathbb{N^{+}}$, then for any $\varphi \in C^{0}(X,\mathbb{R})$ with $I_{\varphi}(f) \cap X_{n_0}\neq\emptyset$ and any non-empty open set $U\subseteq X,$ we have 
	$$h_{top}(f,I_{\varphi}(f)\cap \{x\in X:V_T(x)\subseteq \cM_{f|_{X_{n_0}}}(X_{n_0})\}\cap U\cap Trans\})\geq h_{top}(f,X_{n_0}).$$
	Moreover, if $(X,f)$ has uniform separation property, then we have
	$$h_{top}(f,I_{\varphi}(f)\cap \{x\in X: \exists\ \mu_1,\mu_{2}\in V_f(x)\text{ s.t. }S_{\mu_1}\subseteq X_{n_0}, S_{\mu_{2}}=X \}\cap U\cap Trans\})\geq h_{top}(f,X_{n_0}),$$
	$$h_{top}(f,I_{\varphi}(f)\cap \{x\in X:S_\mu=X \text{ for any }\mu \in V_T(x)\}\cap U\cap Trans\})\geq h_{top}(f,X_{n_0}).$$
	Moreover, the functions with  $I_{\varphi}(f) \cap X_{n_0}\neq\emptyset$ are open and dense in the space of continuous functions. 
\end{maintheorem}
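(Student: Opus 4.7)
The three lower bounds will follow by applying Theorem \ref{maintheorem-3} to three carefully engineered compact connected sets $K\subseteq\mathcal{M}_f(X)$ of invariant measures, designed so that the saturated set $G_K$ (intersected with $U\cap Trans$) is automatically contained in the corresponding target set, while $\inf\{h_\mu(f):\mu\in K\}$ stays within a small error of $h_{top}(f,X_{n_0})$. Letting the error parameters tend to zero then yields the stated inequalities.

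\textbf{Basic ingredients.} Fix small $\eta,\lambda,\epsilon\in(0,1)$. By the variational principle applied to $(X_{n_0},f|_{X_{n_0}})$, choose $\mu^*\in\mathcal{M}_{f|_{X_{n_0}}}(X_{n_0})$ with $h_{\mu^*}(f)\geq h_{top}(f,X_{n_0})-\eta$. Pick $x_0\in I_\varphi(f)\cap X_{n_0}$; since $X_{n_0}$ is $f$-invariant and compact, $V_f(x_0)\subseteq\mathcal{M}_{f|_{X_{n_0}}}(X_{n_0})$, and since the Birkhoff averages of $\varphi$ along $x_0$ diverge, there exist $\alpha_1,\alpha_2\in V_f(x_0)$ with $\int\varphi\,d\alpha_1\neq\int\varphi\,d\alpha_2$. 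Put $\mu_i^\dagger:=\lambda\alpha_i+(1-\lambda)\mu^*$; then $\mu_i^\dagger$ is invariant, supported in $X_{n_0}$, has entropy $\geq(1-\lambda)(h_{top}(f,X_{n_0})-\eta)$ (entropy is affine on $\mathcal{M}_f(X)$), and $\int\varphi\,d\mu_1^\dagger\neq\int\varphi\,d\mu_2^\dagger$. For the ``Moreover'' clauses, use Proposition \ref{BG} to obtain $\nu\in\mathcal{M}_f(X)$ with $S_\nu=X$ and $\nu(\bigcup_n X_n)=1$, and set $\tilde\mu_i^\dagger:=\epsilon\nu+(1-\epsilon)\mu_i^\dagger$. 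Each $\tilde\mu_i^\dagger$ has full support, satisfies $\tilde\mu_i^\dagger(\bigcup_n X_n)=1$, and has entropy $\geq(1-\epsilon)(1-\lambda)(h_{top}(f,X_{n_0})-\eta)$.

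\textbf{Choice of $K$ and application of Theorem \ref{maintheorem-3}.} For the first inequality, let $K_1$ be the line segment from $\mu_1^\dagger$ to $\mu_2^\dagger$; this lies in $\mathcal{M}_{f|_{X_{n_0}}}(X_{n_0})$, so Theorem \ref{maintheorem-3}(2) applies. For the second, let $K_2$ be the concatenation of three segments, from $\mu_1^\dagger$ to $\tilde\mu_1^\dagger$, then $\tilde\mu_1^\dagger$ to $\tilde\mu_2^\dagger$, and finally $\tilde\mu_2^\dagger$ to $\mu_2^\dagger$; it is compact, connected, contained in $\{\mu:\mu(\bigcup_n X_n)=1\}$, and contains both measures supported in $X_{n_0}$ and measures of full support. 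For the third, let $K_3$ be the segment from $\tilde\mu_1^\dagger$ to $\tilde\mu_2^\dagger$, whose every element has full support. For $K_2,K_3$ we invoke Theorem \ref{maintheorem-3}(1), using the extra hypothesis that $(X,f)$ has uniform separation. In each case, by affineness of entropy, $\inf_{\mu\in K_j}h_\mu(f)\geq(1-\epsilon)(1-\lambda)(h_{top}(f,X_{n_0})-\eta)$; and for every $x\in G_{K_j}\cap U\cap Trans$, $V_f(x)=K_j$ encodes the required support condition and contains two measures with distinct $\varphi$-integrals, so the Birkhoff averages of $\varphi$ have at least two accumulation points, placing $x$ in $I_\varphi(f)$. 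Letting $\eta,\lambda,\epsilon\to 0$ yields the three lower bounds.

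\textbf{Density and main obstacle.} For the last assertion, apply Proposition \ref{proposition-AD} with $Y=X_{n_0}$; it suffices to exhibit one $\phi_0\in C^0(X_{n_0},\mathbb{R})$ with $I_{\phi_0}(f|_{X_{n_0}})\neq\emptyset$. Since $X_{n_0}$ is not periodic and $(X_{n_0},f|_{X_{n_0}})$ is transitive with shadowing, Lemma \ref{BD} gives $h_{top}(f,X_{n_0})>0$, so there exist two distinct invariant measures $\beta_1,\beta_2$ on $X_{n_0}$ separated by some continuous $\phi_0$. Applying Theorem \ref{maintheorem-3}(2) to $(X_{n_0},f|_{X_{n_0}})$ itself, with the constant sequence $X_n\equiv X_{n_0}$ and $K=\{t\beta_1+(1-t)\beta_2:t\in[0,1]\}$, produces a point $x\in G_K$, which necessarily lies in $I_{\phi_0}(f|_{X_{n_0}})$. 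The main technical point in the whole argument is keeping $K_2$ connected while simultaneously housing measures supported in $X_{n_0}$ and measures of full support without sacrificing the entropy lower bound; the three-segment concatenation achieves this because each transitional segment only perturbs the entropy by $O(\epsilon)$.
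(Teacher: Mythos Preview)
Your proof is correct and follows essentially the same strategy as the paper's: both pick a high-entropy measure in $\mathcal{M}_{f|_{X_{n_0}}}(X_{n_0})$ via the variational principle, form convex combinations with two measures having distinct $\varphi$-integrals (and, for the ``Moreover'' clauses, with a full-support measure supplied by Proposition~\ref{BG}) so as to keep entropy close to $h_{top}(f,X_{n_0})$, assemble these into compact connected sets $K_j$ of invariant measures with the desired support behavior, and then invoke Theorem~\ref{maintheorem-3}. The only cosmetic differences are that the paper uses a single segment for each $K_j$ (exploiting the density clause of Proposition~\ref{BG} to place the full-support measure $\omega$ near $\nu_2$) rather than your three-segment concatenation for $K_2$, and it obtains the existence of an irregular point in $X_{n_0}$ by citing \cite[Theorem~1.5]{DOT} instead of re-deriving it from Theorem~\ref{maintheorem-3}(2).
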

\begin{proof}
	For any $\eta>0,$ by the variational principle, there is $\mu\in \cM_{f|_{X_{n_0}}}(X_{n_0})$ such that $h_{\mu}(f)\geq h_{top}(f,X_{n_0})-\eta.$ Since $I_{\varphi}(f) \cap X_{n_0}\neq\emptyset,$ there exist $\mu_1,\mu_2\in \cM_{f|_{X_{n_0}}}(X_{n_0})$ such that $\int \varphi d\mu_1< \int \varphi d\mu_2.$ Take $0< \theta<1$ close to 1 such that for any $i\in\{1,2\}$
	\begin{equation*}
		h_{\nu_i} (f)=\theta h_{\mu}(f)+(1-\theta)h_{\mu_i}(f) \geq h_{\mu}(f)-\eta,
	\end{equation*}
    where $\nu_i=\theta\mu+(1-\theta)\mu_i.$
    Then we have $$\int \varphi d\nu_1=\theta\int \varphi d\mu+(1-\theta)\int \varphi d\mu_1< \theta\int \varphi d\mu+(1-\theta)\int \varphi d\mu_2=\int \varphi d\nu_2,$$ and $\nu_1,\nu_2\in \cM_{f|_{X_{n_0}}}(X_{n_0}).$ 
    By Proposition \ref{BG}, there exists $\omega\in \mathcal{M}_{f}(X)$ with $S_\omega=X$ and $\omega(\bigcup_{n\geq 1}X_{n})=1$ such that 
    $$|\int \varphi d\omega-\int \varphi d\nu_2|<\frac{1}{2}\int \varphi d\nu_2-\frac{1}{2}\int \varphi d\nu_1,$$
    then
    $$\int \varphi d\nu_1<\frac{1}{2}\int \varphi d\nu_1+\frac{1}{2}\int \varphi d\nu_2< \int \varphi d\omega.$$ Take $0< \theta_1<\theta_2<1$ close to 1 such that for any $i\in\{1,2\}$
    \begin{equation*}
    	h_{\omega_i} (f)=\theta_i h_{\nu_1}(f)+(1-\theta_i)h_{\omega}(f) \geq h_{\mu}(f)-2\eta,
    \end{equation*}
    where $\omega_i=\theta_i\nu_1+(1-\theta_i)\omega.$ Then we have $$\int \varphi d\nu_1<\int \varphi d\omega_2=\theta_2\int \varphi d\nu_1+(1-\theta_2)\int \varphi d\omega< \theta_1\int \varphi d\nu_1+(1-\theta_1)\int \varphi d\omega=\int \varphi d\omega_1,$$ 
    and $S_{\omega_1}=S_{\omega_2}=S_\omega=X,$ $\omega_1(\bigcup_{n\geq 1}X_{n})=\omega_2(\bigcup_{n\geq 1}X_{n})=1.$
    Let 
    \begin{align*}
    	K_1\ &:=\{\tau\nu_1+(1-\tau)\nu_2:\tau\in[0,1]\},\\
    	K_2\ &:=\{\tau\nu_1+(1-\tau)\omega_1:\tau\in[0,1]\},\\
    	K_2\ &:=\{\tau\omega_1+(1-\tau)\omega_2:\tau\in[0,1]\}.
    \end{align*}

    By Theorem \ref{maintheorem-3}(2), we have $$h_{top}(f,G_{K_1}\cap U\cap Trans\})=\inf\{h_{\nu}(f):\nu\in K_1\}.$$
    Note that $G_{K_1}\subseteq I_{\varphi}(f)\cap \{x\in X:V_T(x)\subseteq \cM_{f|_{X_{n_0}}}(X_{n_0})\},$ we have 
    \begin{equation}\label{equa-AA}
    	\begin{split}
    		&h_{top}(f,I_{\varphi}(f)\cap \{x\in X:V_T(x)\subseteq \cM_{f|_{X_{n_0}}}(X_{n_0})\}\cap U\cap Trans\})\\
    		&\geq h_{top}(f,G_{K_1}\cap U\cap Trans\})\\
    		&=\inf\{h_{\nu}(f):\nu\in K_1\}\\
    		&=\inf\{\tau h_{\nu_1}(f)+(1-\tau)h_{\nu_{2}}(f):\tau\in [0,1]\}\\
    		&\geq h_\mu(f)-\eta\\
    		&\geq h_{top}(f,X_{n_0})-2\eta.
    	\end{split}
    \end{equation}
    By the arbitrariness of $\eta,$ this yields $$h_{top}(f,I_{\varphi}(f)\cap \{x\in X:V_T(x)\subseteq \cM_{f|_{X_{n_0}}}(X_{n_0})\}\cap U\cap Trans\})\geq h_{top}(f,X_{n_0}).$$
    
    If $(X,f)$ has uniform separation property, then by Theorem \ref{maintheorem-3}(1), we have $$h_{top}(f,G_{K_2}\cap U\cap Trans\})=\inf\{h_{\nu}(f):\nu\in K_2\},$$ and $$h_{top}(f,G_{K_3}\cap U\cap Trans\})=\inf\{h_{\nu}(f):\nu\in K_3\}.$$ Note that $$G_{K_2}\subseteq I_{\varphi}(f)\cap \{x\in X: \exists\ \mu_1,\mu_{2}\in V_f(x)\text{ s.t. }S_{\mu_1}\subseteq X_{n_0}, S_{\mu_{2}}=X \},$$ and $$G_{K_3}\subseteq I_{\varphi}(f)\cap \{x\in X:S_\mu=X \text{ for any }\mu \in V_T(x)\}.$$ In a similar manner of  (\ref{equa-AA}), we have
    $$h_{top}(f,I_{\varphi}(f)\cap \{x\in X: \exists\ \mu_1,\mu_{2}\in V_f(x)\text{ s.t. }S_{\mu_1}\subseteq X_{n_0}, S_{\mu_{2}}=X \}\cap U\cap Trans\})\geq h_{top}(f,X_{n_0})-3\eta,$$
    $$h_{top}(f,I_{\varphi}(f)\cap \{x\in X:S_\mu=X \text{ for any }\mu \in V_T(x)\}\cap U\cap Trans\})\geq h_{top}(f,X_{n_0})-3\eta.$$
    By the arbitrariness of $\eta,$ this yields 
    $$h_{top}(f,I_{\varphi}(f)\cap \{x\in X: \exists\ \mu_1,\mu_{2}\in V_f(x)\text{ s.t. }S_{\mu_1}\subseteq X_{n_0}, S_{\mu_{2}}=X \}\cap U\cap Trans\})\geq h_{top}(f,X_{n_0}),$$
    $$h_{top}(f,I_{\varphi}(f)\cap \{x\in X:S_\mu=X \text{ for any }\mu \in V_T(x)\}\cap U\cap Trans\})\geq h_{top}(f,X_{n_0}).$$
    
    Now we prove that the functions with  $I_{\varphi}(f) \cap X_{n_0}\neq\emptyset$  are open and dense in $C^{0}(X,\mathbb{R})$. Since $(X_{n_0},f)$ is transitive and has the shadowing property, and $X_{n_o}$ is not periodic, then by Lemma \ref{BD} it has positive topological entropy. By \cite[Theorem 1.5]{DOT} the set of irregular points $\cup_{\phi\in C^{0}(X_{n_0},\mathbb{R})} I_\phi(f|_{X_{n_0}})$ is not empty and carries same topological entropy as $f|_{X_{n_0}}$, in particular  there exists some $\phi\in C^{0}(X_{n_0},\mathbb{R})$ with $I_{\phi}(f|_{X_{n_0}})\neq \emptyset.$ By Proposition \ref{proposition-AD}, the proof is complete.
\end{proof}
\begin{Rem}
	From the definition of uniform separation property, if $(X,f)$ has uniform separation property, then so does $(X_n,f|_{X_n})$ for any $n\in\mathbb{N^{+}}.$
\end{Rem}
From the proof of Theorem \ref{maintheorem-6}, it is easy to see that if we replace $I_{\varphi}(f) \cap X_{n_0}\neq\emptyset$ by $\mathrm{Int}(L_\varphi|_{X_{n_0}})\neq \emptyset,$ the result also holds. So we have the following.
\begin{Cor}\label{Coro-1}
	Suppose that $f$ is a homeomorphism from $X$ onto $X$,  $X$ is not periodic, $(X,f)$ has a sequence of nondecreasing $f$-invariant compact subsets $\{X_{n} \subseteq X:n \in \mathbb{N^{+}} \}$ such that $\overline{\bigcup_{n\geq 1}X_{n}}=X$, $\mathrm{Per}(f|_{X_{1}})$ $\neq \emptyset,$ $({X_{n}},f|_{X_{n}})$ has shadowing property, uniform separation property and is transitive for any $n \in \mathbb{N^{+}}.$ If for any $\eta>0,$ any  $\mu \in \mathcal{M}_{f}(X)$ and any neighbourhood $G$ of $\mu$ in $\mathcal{M}(X)$, there exist $n\in\mathbb{N^{+}}$ and $\nu \in \mathcal{M}_{f|_{X_n}}(X_n)$ such that $\nu\in G$ and $h_{\nu}(f)\geq h_{\mu}(f)-\eta.$
	Then for any $\varphi \in C^{0}(X,\mathbb{R})$ with $I_{\varphi}(f) \neq\emptyset,$ there exists $N\in\mathbb{N^{+}}$ such that for any non-empty open set $U\subseteq X,$ we have 
	$$h_{top}(f,I_{\varphi}(f)\cap \{x\in X:V_T(x)\subseteq \cM_{f|_{X_{n}}}(X_{n})\text{ for some }n\geq N\}\cap U\cap Trans\})= h_{top}(f).$$
	Moreover, if $(X,f)$ has uniform separation property, then we have
	$$h_{top}(f,I_{\varphi}(f)\cap \{x\in X: \exists\ \mu_1,\mu_{2}\in V_f(x)\text{ s.t. }S_{\mu_1}\subseteq X_{n}, S_{\mu_{2}}=X \text{ for some }n\geq N\}\cap U\cap Trans\})= h_{top}(f),$$
	$$h_{top}(f,I_{\varphi}(f)\cap \{x\in X:S_\mu=X \text{ for any }\mu \in V_T(x)\}\cap U\cap Trans\})= h_{top}(f).$$
	Moreover, the functions with  $I_{\varphi}(f)\neq\emptyset$ are open and dense in the space of continuous functions. 
\end{Cor}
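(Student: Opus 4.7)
The plan is to reduce to Theorem \ref{maintheorem-6} applied to a suitable subsystem $X_{n_0}$ whose topological entropy is close to $h_{top}(f)$; the added density-of-subsystem-measures hypothesis is precisely what lets us produce such an $X_{n_0}$.

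First I would pin down $N$. Since $I_{\varphi}(f)\neq\emptyset$, choose $x\in I_{\varphi}(f)$ and extract two measures $\mu_1,\mu_2\in V_f(x)\subseteq\mathcal{M}_f(X)$ with $\int\varphi\,d\mu_1\neq\int\varphi\,d\mu_2$. Using the density hypothesis with weak$^{*}$-neighborhoods narrow enough to keep the values of $\int\varphi\,d\cdot$ separated, I approximate each $\mu_i$ by $\nu_i\in\mathcal{M}_{f|_{X_{n_i}}}(X_{n_i})$. Set $N=\max(n_1,n_2)$; by the nesting $X_n\subseteq X_{n+1}$, both $\nu_i$ live in $\mathcal{M}_{f|_{X_N}}(X_N)$ and still satisfy $\int\varphi\,d\nu_1\neq\int\varphi\,d\nu_2$. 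Since $(X_N,f|_{X_N})$ is transitive, has shadowing and has uniform separation, known results for shadowing systems (cf.\ \cite[Theorem 1.5]{DOT}, or a direct application of Theorem \ref{maintheorem-3}(2) with $X_N$ in the role of the ambient space and $K=\{\tau\nu_1+(1-\tau)\nu_2:\tau\in[0,1]\}$) produce points in $X_N$ whose Birkhoff $\varphi$-averages oscillate between the two distinct integrals, so $I_{\varphi}(f)\cap X_N\neq\emptyset$. Enlarging $N$ if needed, I may also ensure $X_N$ is not periodic, which is automatic for $N$ large since $X$ is not periodic and $\overline{\bigcup_n X_n}=X$.

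Next, fix $\eta>0$. By the variational principle on $X$ pick $\mu_0\in\mathcal{M}_f(X)$ with $h_{\mu_0}(f)\geq h_{top}(f)-\eta$; by the density hypothesis pick $\nu_0\in\mathcal{M}_{f|_{X_{n_0'}}}(X_{n_0'})$ with $h_{\nu_0}(f)\geq h_{\mu_0}(f)-\eta$. Let $n_0:=\max(N,n_0')$; then $\nu_0\in\mathcal{M}_{f|_{X_{n_0}}}(X_{n_0})$ and $h_{top}(f,X_{n_0})\geq h_{\nu_0}(f)\geq h_{top}(f)-2\eta$. Since $n_0\geq N$, we have $I_{\varphi}(f)\cap X_{n_0}\neq\emptyset$, $X_{n_0}$ is not periodic, and $(X_{n_0},f|_{X_{n_0}})$ has uniform separation, so Theorem \ref{maintheorem-6} applies and yields
$$h_{top}(f,\,I_{\varphi}(f)\cap\{x:V_f(x)\subseteq\mathcal{M}_{f|_{X_{n_0}}}(X_{n_0})\}\cap U\cap Trans)\geq h_{top}(f,X_{n_0})\geq h_{top}(f)-2\eta,$$
and analogously for the two sets in the ``moreover'' clause when $(X,f)$ has uniform separation. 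Because $\{V_f(x)\subseteq\mathcal{M}_{f|_{X_{n_0}}}(X_{n_0})\}$ is contained in $\{x:V_f(x)\subseteq\mathcal{M}_{f|_{X_n}}(X_n)\text{ for some }n\geq N\}$ (take $n=n_0$), the same lower bound passes to the larger set appearing in the statement; letting $\eta\to0$ and using the trivial upper bound $h_{top}(f)$ gives the desired equalities. The sets involving full-support witnesses are handled identically using the ``moreover'' conclusions of Theorem \ref{maintheorem-6}, which in turn rely on Proposition \ref{BG} to produce a fully supported invariant measure concentrated on $\bigcup_n X_n$.

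For the last claim, since $X$ is not periodic while $\mathrm{Per}(f|_{X_1})\neq\emptyset$ and $\overline{\bigcup_n X_n}=X$, some $X_n$ is non-periodic; by Lemma \ref{BD}, $h_{top}(f|_{X_n})>0$, and \cite[Theorem 1.5]{DOT} gives $\phi\in C^{0}(X_n,\mathbb{R})$ with $I_\phi(f|_{X_n})\neq\emptyset$. Proposition \ref{proposition-AD} then makes $\{\varphi\in C^{0}(X,\mathbb{R}):I_\varphi(f)\cap X_n\neq\emptyset\}$ open and dense, a fortiori so is $\{\varphi:I_\varphi(f)\neq\emptyset\}$. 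The main obstacle is the coordination step: the density hypothesis produces approximants on a priori different $X_{n_i}$, so one must carefully use the nesting to push all relevant measures (the entropy-approximant $\nu_0$ and the $\varphi$-separating pair $\nu_1,\nu_2$) into a common $X_{n_0}$ while preserving both the entropy estimate and the distinct $\varphi$-integrals; once this is arranged the reduction to Theorem \ref{maintheorem-6} is mechanical.
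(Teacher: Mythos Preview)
Your proof is correct and follows essentially the same route as the paper: use the density hypothesis to locate $N$ with two $X_N$-supported invariant measures separating $\int\varphi$, then for each $\eta>0$ pass to some $n_0\geq N$ with $h_{top}(f,X_{n_0})\geq h_{top}(f)-2\eta$ and invoke Theorem~\ref{maintheorem-6}. One small imprecision in the final paragraph: ``a fortiori'' yields density of $\{\varphi:I_\varphi(f)\neq\emptyset\}$ from containing an open dense set, but not openness; to get openness either apply Proposition~\ref{proposition-AD} directly with $Y=X$, or observe that under the density hypothesis this set coincides with $\bigcup_{n}\{\varphi:I_\varphi(f)\cap X_n\neq\emptyset\}$, a union of open sets.
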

\begin{proof}
	For any $\eta>0,$ any  $\mu \in \mathcal{M}_{f}(X)$ and any neighbourhood $G$ of $\mu$ in $\mathcal{M}(X)$, there exist $m\in\mathbb{N^{+}}$ and $\nu \in \mathcal{M}_{f|_{X_m}}(X_m)$ such that $\nu\in G$ and $h_{\nu}(f)\geq h_{\mu}(f)-\eta.$ Then $\sup\limits_{n\geq 1}h_{top}(f,X_n)\geq h_{top}(f,X_m)\geq h_{\nu}(f)\geq h_{\mu}(f)-\eta.$
	By the arbitrariness of $\mu$ and $\eta,$ this yields $\sup\limits_{n\geq 1}h_{top}(f,X_n)= h_{top}(f,X).$
	
	Since $I_{\varphi}(f)\neq\emptyset,$ one has  $\mathrm{Int}(L_\varphi)\neq\emptyset$, then there exist $\lambda_1,\lambda_2\in\mathcal M_{f}(X)$ and $a,b\in\mathbb{R}$ such that $\int\varphi d\lambda_1<a<b< \int\varphi d\lambda_2.$ Then there exist $N\in\mathbb{N^{+}}$ and $\nu_1,\nu_2 \in \mathcal{M}_{f|_{X_N}}(X_N)$ such that $\int\varphi d\nu_1<\frac{a+b}{2}< \int\varphi d\nu_2.$ This implies $\mathrm{Int}(L_\varphi|_{X_{n}})\neq\emptyset$ for any $n\geq N.$ So by Theorem \ref{maintheorem-6} we have
	\begin{equation}\label{equa-AD}
		\begin{split}
			&h_{top}(f,I_{\varphi}(f)\cap \{x\in X:V_T(x)\subseteq \cM_{f|_{X_{n}}}(X_{n})\text{ for some }n\geq N\}\cap U\cap Trans\})\\
			\geq &\sup\limits_{n\geq N}h_{top}(f,X_n)
			=\sup\limits_{n\geq 1}h_{top}(f,X_n)
			= h_{top}(f).
		\end{split}
	\end{equation}
    If further $(X,f)$ has uniform separation property, then
    in a similar manner of  (\ref{equa-AD}), we have $$h_{top}(f,I_{\varphi}(f)\cap \{x\in X: \exists\ \mu_1,\mu_{2}\in V_f(x)\text{ s.t. }S_{\mu_1}\subseteq X_{n}, S_{\mu_{2}}=X \text{ for some }n\geq N\}\cap U\cap Trans\})= h_{top}(f),$$
    $$h_{top}(f,I_{\varphi}(f)\cap \{x\in X:S_\mu=X \text{ for any }\mu \in V_T(x)\}\cap U\cap Trans\})= h_{top}(f).$$
	This complete tha proof of Corollary \ref{Coro-1}.
\end{proof}

\begin{maintheorem}\label{maintheorem-7}
	Suppose that $f$ is a homeomorphism from $X$ onto $X$, $(X,f)$ has a sequence of nondecreasing $f$-invariant compact subsets $\{X_{n} \subseteq X:n \in \mathbb{N^{+}} \}$ such that $\overline{\bigcup_{n\geq 1}X_{n}}=X$, $\mathrm{Per}(f|_{X_{1}})$ $\neq \emptyset,$ $({X_{n}},f|_{X_{n}})$ has shadowing property and is transitive for any $n \in \mathbb{N^{+}}.$ 
	\begin{description}
		\item[(1)] If $(X_{n_0},f|_{X_{n_0}})$ has uniform separation property for some ${n_0}\in\mathbb{N^{+}}$, then for any non-empty open set $U\subseteq X,$ we have 
		$$h_{top}(f,QR(f)\cap \{x\in X:V_T(x)\text{ consists of one measure in } \cM_{f|_{X_{n_0}}}(X_{n_0})\}\cap U\cap Trans\})\geq h_{top}(f,X_{n_0}),$$ 
		\item[(2)] If $(X,f)$ has uniform separation property, then for any non-empty open set $U\subseteq X$ and any $n\geq 1,$ we have 
		$$h_{top}(f,QR(f)\cap \{x\in X:V_T(x)\text{ consists of one measure in } \cM_{f|_{X_{n}}}(X_{n})\}\cap U\cap Trans\})\geq h_{top}(f,X_{n}),$$
		$$h_{top}(f,QR(f)\cap \{x\in X:V_T(x)\text{ consists of one measure with full support}\}\cap U\cap Trans\})\geq \sup\limits_{n\geq 1}h_{top}(f,X_n).$$
	\end{description}
\end{maintheorem}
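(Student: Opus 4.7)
The plan is to deduce both parts of Theorem \ref{maintheorem-7} from Theorem \ref{maintheorem-3} by taking the compact connected set $K$ to be a singleton $\{\mu\}$. When $K=\{\mu\}$, the saturated set $G_K$ consists of precisely those $x$ whose empirical measures $\mathcal E_n(x)$ converge in the weak$^*$-topology to $\mu$; every such $x$ lies in $QR(f)$, and $V_f(x)=\{\mu\}$ is automatically a singleton. So the only real task is to produce, in each case, a $\mu$ with the required support/entropy and then invoke Theorem \ref{maintheorem-3}.

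For part (1), given $\eta>0$ use the variational principle inside $X_{n_0}$ to choose $\mu\in\cM_{f|_{X_{n_0}}}(X_{n_0})$ with $h_\mu(f)\geq h_{top}(f,X_{n_0})-\eta$. The singleton $K=\{\mu\}$ is a nonempty compact connected subset of $\cM_{f|_{X_{n_0}}}(X_{n_0})$, so Theorem \ref{maintheorem-3}(2) yields $h_{top}(f,G_{\{\mu\}}\cap U\cap Trans)=h_\mu(f)$. Since $G_{\{\mu\}}\subseteq QR(f)\cap\{x:V_f(x)=\{\mu\}\subseteq\cM_{f|_{X_{n_0}}}(X_{n_0})\}$, monotonicity of Bowen entropy and the arbitrariness of $\eta$ give the desired bound. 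The first statement of part (2) is identical with $n_0$ replaced by an arbitrary $n$, using Theorem \ref{maintheorem-3}(1) in place of (2) now that the uniform separation property is assumed on all of $(X,f)$.

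For the last statement of part (2), fix $n\geq 1$ and $\eta>0$, and pick $\nu\in\cM_{f|_{X_n}}(X_n)$ with $h_\nu(f)\geq h_{top}(f,X_n)-\eta$. By Proposition \ref{BG}, there exists $\omega\in\cM_f(X)$ with $S_\omega=X$ and $\omega(\bigcup_{m\geq 1}X_m)=1$. For $\theta\in(0,1)$ sufficiently close to $1$, set $\mu=\theta\nu+(1-\theta)\omega$; then $S_\mu=X$, $\mu(\bigcup_{m\geq 1}X_m)=1$, and by affinity of metric entropy
$$h_\mu(f)=\theta h_\nu(f)+(1-\theta)h_\omega(f)\geq h_{top}(f,X_n)-2\eta.$$
Since $K=\{\mu\}\subseteq\{\mu\in\cM_f(X):\mu(\bigcup_{m\geq 1}X_m)=1\}$, Theorem \ref{maintheorem-3}(1) applies and gives $h_{top}(f,G_{\{\mu\}}\cap U\cap Trans)=h_\mu(f)$. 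Since $G_{\{\mu\}}$ sits inside $QR(f)\cap\{x:V_f(x)\text{ is a singleton with full support}\}$, letting $\eta\to 0$ and then taking the supremum over $n\geq 1$ yields the conclusion.

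The main (and essentially only) obstacle is in the last statement: one must construct a single invariant measure that simultaneously has full support, satisfies the support hypothesis $\mu(\bigcup_{m\geq 1}X_m)=1$ required by Theorem \ref{maintheorem-3}(1), and has metric entropy arbitrarily close to $h_{top}(f,X_n)$. The convex combination $\theta\nu+(1-\theta)\omega$ resolves this: the first summand $\nu$ (living inside $X_n$) carries the entropy, while the second summand $\omega$ from Proposition \ref{BG} contributes the full support and the filtration-mass condition, and the affinity of the Kolmogorov--Sinai entropy with respect to convex combinations of invariant measures keeps the entropy loss under control.
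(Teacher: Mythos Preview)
Your proposal is correct and follows essentially the same approach as the paper: apply Theorem \ref{maintheorem-3} with $K=\{\mu\}$ a singleton, choosing $\mu$ via the variational principle inside $X_{n_0}$ (resp.\ $X_n$) for part (1) and the first half of part (2), and via the convex combination $\theta\nu+(1-\theta)\omega$ with $\omega$ from Proposition \ref{BG} for the full-support statement. The only cosmetic difference is that for the first inequality in (2) the paper invokes item (1) after noting that uniform separation on $(X,f)$ is inherited by each $(X_n,f|_{X_n})$, whereas you apply Theorem \ref{maintheorem-3}(1) directly; both routes are valid.
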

\begin{proof}
	(1) For any $\eta>0,$ by the variational principle, there is $\mu\in \cM_{f|_{X_{n_0}}}(X_{n_0})$ such that $h_{\mu}(f)\geq h_{top}(f,X_{n_0})-\eta.$ 
	By Theorem \ref{maintheorem-3}(2), we have $$h_{top}(f,G_{\mu}\cap U\cap Trans\})=h_\mu(f).$$ Note that $G_{\mu}\subseteq QR(f)\cap \{x\in X:V_T(x)\text{ consists of one measure in } \cM_{f|_{X_{n_0}}}(X_{n_0})\},$ we have 
	\begin{equation*}
		\begin{split}
			&h_{top}(f,QR(f)\cap \{x\in X:V_T(x)\text{ consists of one measure in } \cM_{f|_{X_{n_0}}}(X_{n_0})\}\cap U\cap Trans\})\\
			\geq& h_\mu(f)
			\geq h_{top}(f,X_{n_0})-\eta.
		\end{split}
	\end{equation*}
    By the arbitrariness of $\eta,$ this yields $$h_{top}(f,QR(f)\cap \{x\in X:V_T(x)\text{ consists of one measure in } \cM_{f|_{X_{n_0}}}(X_{n_0})\}\cap U\cap Trans\})\geq h_{top}(f,X_{n_0}),$$ which completes the proof of item (1).
    
    (2) Note that if $(X,f)$ has uniform separation property, then so does $(X_n,f|_{X_n})$ for any $n\in\mathbb{N^{+}}.$ Thus we have 
    $$h_{top}(f,QR(f)\cap \{x\in X:V_T(x)\text{ consists of one measure in } \cM_{f|_{X_{n}}}(X_{n})\}\cap U\cap Trans\})\geq h_{top}(f,X_{n}),$$ by item (1).
    
    For any $n\geq 1$ and any $\eta>0,$ by the variational principle, there is $\mu\in \cM_{f|_{X_{n}}}(X_{n})$ such that $h_{\mu}(f)\geq h_{top}(f,X_{n})-\eta.$ 
    By Proposition \ref{BG}, there exists $\omega\in \mathcal{M}_{f}(X)$ with $S_\omega=X$ and $\omega(\bigcup_{n\geq 1}X_{n})=1.$ Take $0< \theta<1$ close to 1 such that
    \begin{equation*}
    	h_{\nu} (f)=\theta h_{\mu}(f)+(1-\theta)h_{\omega}(f) \geq h_{\mu}(f)-\eta,
    \end{equation*}
    where $\nu=\theta\mu+(1-\theta)\omega.$ Then we have $S_{\nu}=S_\omega=X$ and $\nu(\bigcup_{n\geq 1}X_{n})=1.$
    By Theorem \ref{maintheorem-3}(1), we have $$h_{top}(f,G_{\nu}\cap U\cap Trans\})=h_\nu(f).$$ Note that $G_{\mu}\subseteq QR(f)\cap \{x\in X:V_T(x)\text{ consists of one measure with full support}\},$ we have 
    \begin{equation*}
    	\begin{split}
    		&h_{top}(f,QR(f)\cap \{x\in X:V_T(x)\text{ consists of one measure with full support}\}\cap U\cap Trans\})\\
    		\geq &h_\nu(f)
    		\geq h_{top}(f,X_{n})-2\eta.
    	\end{split}
    \end{equation*}
    By the arbitrariness of $n$ and $\eta,$ this yields $$h_{top}(f,QR(f)\cap \{x\in X:V_T(x)\text{ consists of one measure with full support}\}\cap U\cap Trans\})\geq \sup\limits_{n\geq 1}h_{top}(f,X_n),$$ which completes the proof of item (2).
\end{proof}

In a similar manner of Corollary \ref{Coro-1}, we have the following.

\begin{Cor}
	Suppose that $f$ is a homeomorphism from $X$ onto $X$,  $(X,f)$ has a sequence of nondecreasing $f$-invariant compact subsets $\{X_{n} \subseteq X:n \in \mathbb{N^{+}} \}$ such that $\overline{\bigcup_{n\geq 1}X_{n}}=X$, $\mathrm{Per}(f|_{X_{1}})$ $\neq \emptyset,$ $({X_{n}},f|_{X_{n}})$ has shadowing property, uniform separation property and is transitive for any $n \in \mathbb{N^{+}}.$ If for any $\eta>0,$ any  $\mu \in \mathcal{M}_{f}(X)$ and any neighbourhood $G$ of $\mu$ in $\mathcal{M}(X)$, there exist $n\in\mathbb{N^{+}}$ and $\nu \in \mathcal{M}_{f|_{X_n}}(X_n)$ such that $\nu\in G$ and $h_{\nu}(f)\geq h_{\mu}(f)-\eta.$
	Then there exists $N\in\mathbb{N^{+}}$ such that  for any non-empty open set $U\subseteq X,$ we have 
	$$h_{top}(f,QR(f)\cap \{x\in X:V_T(x)\text{ consists of one measure in } \bigcup_{n\geq N}\cM_{f|_{X_{n}}}(X_{n})\}\cap U\cap Trans\})= h_{top}(f).$$
	Moreover, if $(X,f)$ has uniform separation property, then for any non-empty open set $U\subseteq X,$ we have 
	$$h_{top}(f,QR(f)\cap \{x\in X:V_T(x)\text{ consists of one measure with full support}\}\cap U\cap Trans\})= h_{top}(f).$$
\end{Cor}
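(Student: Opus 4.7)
The plan is to treat this Corollary as a direct combination of two ingredients: the entropy-density hypothesis on $\{X_n\}$, which upgrades the filtration to capture the full topological entropy, and Theorem \ref{maintheorem-7}, which has already done all the hard construction work for the sets appearing on the left-hand side. The upper bounds $\leq h_{top}(f)$ are automatic from the monotonicity of Bowen entropy, so everything reduces to proving the lower bounds.

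First I would establish the auxiliary equality $\sup_{n\geq 1} h_{top}(f,X_n) = h_{top}(f)$. The variational principle for $(X,f)$ gives $h_{top}(f) = \sup\{h_\mu(f):\mu\in\mathcal{M}_f(X)\}$. Given $\mu \in \mathcal{M}_f(X)$ and $\eta>0$, the standing hypothesis of the Corollary produces $n$ and $\nu \in \mathcal{M}_{f|_{X_n}}(X_n)$ with $h_\nu(f) \geq h_\mu(f) - \eta$, and the variational principle on $X_n$ yields $h_{top}(f,X_n) \geq h_\nu(f)$. Taking $\eta \to 0$ and sup over $\mu$ gives the claim. Note that since the filtration is nondecreasing, $h_{top}(f,X_n)$ is monotone in $n$, so in fact $\sup_{n\geq N} h_{top}(f,X_n) = h_{top}(f)$ for every fixed $N$; this is why any $N \in \mathbb{N}^+$ will work in the statement.

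For the first conclusion, I would take any $N \in \mathbb{N}^+$ (say $N=1$). By hypothesis, each $(X_n,f|_{X_n})$ has the uniform separation property, so Theorem \ref{maintheorem-7}(1) applies with $n_0 = n$ for every $n \geq N$, yielding
\[
h_{top}\bigl(f,\,QR(f)\cap\{x: V_f(x)\subseteq \cM_{f|_{X_n}}(X_n)\}\cap U\cap Trans\bigr)\;\geq\; h_{top}(f,X_n).
\]
Since $\cM_{f|_{X_n}}(X_n) \subseteq \bigcup_{m\geq N}\cM_{f|_{X_m}}(X_m)$, the set on the left is contained in the set appearing in the target inequality, and taking sup over $n \geq N$ gives a lower bound of $\sup_{n\geq N}h_{top}(f,X_n) = h_{top}(f)$ by the previous paragraph. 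For the moreover part, the extra assumption that $(X,f)$ has uniform separation property is exactly what is needed to invoke the second estimate of Theorem \ref{maintheorem-7}(2), which already supplies the bound $\sup_{n\geq 1}h_{top}(f,X_n)$ for the full-support saturated set; combining this with $\sup_{n\geq 1} h_{top}(f,X_n) = h_{top}(f)$ closes that case.

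There is no real obstacle here: the Corollary is a repackaging. The only subtlety is making sure the set-theoretic inclusion $\cM_{f|_{X_n}}(X_n) \subseteq \bigcup_{m\geq N}\cM_{f|_{X_m}}(X_m)$ is valid for $n\geq N$ (which is clear since each summand appears in the union) and that the monotonicity of Bowen entropy in the set argument lets us transfer the lower bound from a subset to a superset. Aside from citing Theorem \ref{maintheorem-7} and verifying that its hypotheses are inherited (the uniform separation property on $(X,f)$ passes to each invariant closed subset, as already noted in the Remark after Theorem \ref{maintheorem-6}), nothing new needs to be constructed.
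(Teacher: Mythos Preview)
Your proposal is correct and matches the paper's approach: the paper does not give a separate proof but simply states ``In a similar manner of Corollary~\ref{Coro-1}'', and your argument is exactly the natural adaptation of that proof to the $QR$ setting, invoking Theorem~\ref{maintheorem-7} in place of Theorem~\ref{maintheorem-6}. Your observation that any $N\in\mathbb{N}^+$ works here (since Theorem~\ref{maintheorem-7}(1) needs no analogue of the condition $I_\varphi(f)\cap X_{n_0}\neq\emptyset$) is also correct; the existential phrasing in the statement is just parallelism with Corollary~\ref{Coro-1}.
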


\subsection{Proofs of Theorem \ref{maintheorem-1'} and \ref{maintheorem-1}}

First, we give the proof of Theorem \ref{maintheorem-1'}.

\noindent\textbf{Proof of Theorem \ref{maintheorem-1'}:} By definition, we can take a sequense of hyperbolic periodic points $\{p_n\}$ dense in $H(p)$ and all homoclinically related to $p$ such that $p_{1}=p$ and $p_2=q$.
Then we can construct an increasing sequence of hyperbolic horseshoes $\{\Lambda_n\}_{n\geq 1}$ contained in $H(p)$ inductively as follows:
\begin{itemize}
		\item Let $\Lambda_1=\Lambda.$
	\item Let $\Lambda_2$ be a transitive locally maximal hyperbolic set that contains $p_1$ and $\Lambda$. Such $\Lambda_1$ exists since $p_1$ is homoclinically related to $p_2$ (for example, see \cite[Lemma 8]{Newho1979}).
	\item For $n\geq 3$, let $\Lambda_n$ be a transitive locally maximal hyperbolic that contains $p_{n}$ and also contains the  hyperbolic set $\Lambda_{n-1}$.
\end{itemize}
By density of  $\{p_n\}$, the union of $\Lambda_n$ is dense in $H(p)$. Since $\Lambda_n$ is a locally maximal hyperbolic set, then it is expansive by  \cite[Corollary 6.4.10]{KatHas} and has shadowing property by \cite[Theorem 18.1.2]{KatHas}. Thus $\Lambda_n$ has uniform separation property by Proposition \ref{proposition-AA}.  By Lemma \ref{BD}, one has $h_{top}(f,\Lambda)>0.$
Since $H(p)$ is not uniformly hyperbolic, then $x$ is not uniformly hyperbolic if $\omega_f(x)=H(p).$ Note that for any $n\in\mathbb{N^{+}},$ each $\mu\in\mathcal{M}_{f|_{\Lambda_n}}(\Lambda_n)$ is uniformly hyperblic, and each $\nu\in\mathcal{M}_f(H(p))$ with $S_{\nu}=H(p)$ is not uniformly hperbolic.  So we have Theorem \ref{maintheorem-1'} by Theorem \ref{maintheorem-6} and Theorem \ref{maintheorem-7}.\qed

Next, we give the proof of Theorem \ref{maintheorem-1}.

\noindent\textbf{Proof of Theorem \ref{maintheorem-1}:}  
Since $\mu$ is a hyperbolic ergodic measure, for any $\eta>0,$
from \cite{Katok} (or Theorem S.5.3 on Page 694 of book \cite{KatHas}), there exist a hyperbolic periodic point $p$ and a transitive locally maximal hyperbolic set $\Lambda$ such that $S_\mu\subset H(p),$ $p\in \Lambda \subset H(p)$ and $h_{top}(f,\Lambda)\geq h_{\mu}(f)-\eta.$
Then by Theorem \ref{maintheorem-1'}, we have 
\begin{description}
	\item[(a)] $h_{top}(f,\{x\in M: x\text{ is not uniformly hyperbolic, }\text{each }\mu\in V_f(x) \text{ is uniformly hyperbolic}\}\}\cap IR(f)\cap Rec)\geq h_{top}(f,\Lambda)\geq h_{\mu}(f)-\eta,$
	\item[(b)] $h_{top}(f,\{x\in M: x\text{ is not uniformly hyperbolic, }V_f(x) \text{ consists of one uniformly hyperbolic me-}\\ \text{asure}\}\cap QR(f)\cap Rec)\geq h_{top}(f,\Lambda)\geq h_{\mu}(f)-\eta.$
\end{description} 
Moreover, if $H(p)$ has uniform separation property, then
\begin{description}
	\item[(c)] $h_{top}(f,\{x\in M: x\text{ is not uniformly hyperbolic, }\exists\ \mu_1,\mu_{2}\in V_f(x)\text{ s.t. } \mu_1\text{ is uniformly hyperbolic,}\\ \mu_2\text{ is not uniformly hyperbolic}\}\cap IR(f)\cap Rec)\geq h_{top}(f,\Lambda)\geq h_{\mu}(f)-\eta,$
	\item[(d)] $h_{top}(f,\{x\in M: x\text{ is not uniformly hyperbolic, }\text{each }\mu\in V_f(x) \text{ is not uniformly hyperbolic}\}\cap IR(f)\cap Rec)\geq h_{top}(f,\Lambda)\geq h_{\mu}(f)-\eta,$
	\item[(e)] $h_{top}(f,\{x\in M: x\text{ is not uniformly hyperbolic, }V_f(x) \text{ consists of one measure which is not uni-}\\ \text{formly hyperbolic }\}\cap QR(f)\cap Rec)\geq h_{top}(f,\Lambda)\geq h_{\mu}(f)-\eta.$
\end{description} 
By the arbitrariness of $\eta,$ we complete the proof.
\qed

\section*{ Acknowledgements.}{ 
	X. Hou and X. Tian are 
	supported by National Natural Science Foundation of China (grant No.   12071082,11790273) and in part by Shanghai Science and Technology Research Program (grant No. 21JC1400700).
}


\begin{thebibliography}{10}














\bibitem{Arna2001}  M.-C. Arnaud, {\it Création de connexions en topologie $C^1$}, Ergod. Th. \& Dynam. Sys. 21 (2001), 339–381.




































\bibitem{BC2004} C. Bonatti and S. Crovisier, {\it Récurrence et généricité,} Invent. Math., 158 (2004), 33–104. 

\bibitem{BD1999} C. Bonatti and L. Díaz, {\it Connexions hétéroclines et généricité d'une infinité de puits et de sources,} Ann. Sci. École Norm. Sup. 32 (1999) 135–150. 

\bibitem{Barreira-Schmeling2000}
L. Barreira and J. Schmeling, {\it Sets of non-typical points have full topological entropy and full Hausdorff dimension}, Israel J. math.  116 (2000),   29-70.

\bibitem{BV-ex} C. Bonatti and M. Viana, {\it SRB measures for partially hyperbolic systemswhose central direction is mostly contracting,} Israel J. Math. 115(2000), 157-219.







\bibitem{Bowen1973} R. Bowen, {\it Topological entropy for noncompact sets,}  Trans. Amer. Math. Soc. 184 (1973), 125-136.








\bibitem{BK} M. Brin and A. Katok, {\it On local entropy. Geometric dynamics} (Rio de Janeiro, 1981), 30--38,  Lecture Notes in Math, 1007. Springer, Berlin, 1983.







\bibitem{Buzzi1997}
J. Buzzi,  {\it Intrinsic ergodicity of smooth interval maps}, Israel J. Math., 100 (1997), 125–161.

\bibitem{BFSV2012}
J. Buzzi, T. Fisher, M. Sambarino and C. Vásquez,  {\it Maximal entropy measures for certain partially hyperbolic, derived from Anosov systems}, Ergod. Th. \& Dynam. Sys. 32(1) (2012), 63–79.


\bibitem{CarVar2021}
M. Carvalho and P. Varandas {\it Genericity of historic behavior for maps and flows}, Preprint.

\bibitem{CKS}
E. Chen, T. Kupper and L. Shu,  {\it Topological entropy for divergence points}, Ergod. Th. \& Dynam. Sys. 25(4) (2005), 1173–1208.



\bibitem{CT} A. Chen and X. Tian, {\it Distributional chaos in multifractal analysis, recurrence and transitivity}, Ergod. Th. $\&$ Dynam. Sys. 41(02) (2021), 349-378.
















\bibitem{Sig}
M. Denker, C. Grillenberger and K. Sigmund, {\it Ergodic Theory on Compact Spaces}, (Lecture Notes in Mathematics, 527). Springer, Berlin, 1976, p. 177.






\bibitem{DolPes2002}
D. Dolgopyat and Ya. Pesin, {\it Every manifold supports a completely hyperbolic diffeomorphism}, Ergod. Th. \& Dynam. Sys. 22 (2002), 409–437.






\bibitem{DOT}
Y. Dong, P. Oprocha and X. Tian, {\it On the irregular points for systems with the shadowing property}, Ergod. Th. \& Dynam. Sys. 38(6) (2018), 2108–2131.




\bibitem{DT} Y. Dong and X. Tian, {\it  Different statistical future of dynamical orbits over expanding or hyperbolic systems (I): empty syndetic center}, 2017, arXiv:1701.01910v2.

\bibitem{DTY2015}
Y. Dong, X. Tian and X. Yuan, {\it  Ergodic properties of systems with asymptotic average shadowing property}, J. Math. Anal. Appl. 432 (1) (2015) 53–73.















\bibitem{Feng-Huang} 
D. Feng and W. Huang, {\it Variational principles for topological entropies of subsets}, J. Funct. Anal. 263(8) (2012), 2228–2254.



























\bibitem{HLT}
X. Hou, W. Lin and X. Tian, {\it  Ergodic average of typical orbits and typical functions}, 2021, arXiv:2107.00205v2.



\bibitem{HT}
X. Hou and X. Tian, {\it  Strongly distributional chaos of irregular orbits that are not uniformly hyperbolic}, 2021, arXiv:2111.06055.








\bibitem{HTW}
Y. Huang, X. Tian and X. Wang, {\it Transitively-saturated property, Banach recurrence and Lyapunov regularity}, Nonlinearity 32 (7), 2019, 2721-2757.








\bibitem{Katok} A. Katok, {\it Lyapunov exponents, entropy and periodic points of diffeomorphisms}, Publ. Math. Inst. Hautes \'Etudes Sci.  51 (1980), 137--173.

\bibitem{Katok-ex}  A. Katok, {\it Bernoulli diffeomorphisms on surfaces, } Ann. of Math. (2) 110(3) (1979), 529–547.


\bibitem{KatHas} A. Katok and B. Hasselblatt, {\it Introduction to the Modern Theory of Dynamical Systems (Encyclopedia of Mathematics and Its Applications, 54)}, Cambrige University Press, Cambrige, 1995.




\bibitem{KS}
S. Kiriki and T. Soma, {\it Takens' last problem and existence of non-trivial wandering domains},  Adv. Math. 306 (2017), 524–588.















\bibitem{LW2014}
J. Li and M. Wu, {\it Generic property of irregular sets in systems satisfying the specification property}, Discrete Contin. Dyn. Syst. 34(2) (2014), 635–645.



\bibitem{LLST}
C. Liang, G. Liao, W. Sun and X. Tian, {\it Variational equalities of entropy in nonuniformly hyperbolic systems}, Trans. Amer. Math. Soc. 369(5) (2017), 3127–3156.



\bibitem{LST}
C. Liang, W. Sun and X. Tian, {\it Ergodic properties of invariant measures for $C^{1+\alpha}$  non-uniformly hyperbolic systems},  Ergod. Th. \& Dynam. Sys. 33(2) (2013), 560-584.


\bibitem{LVY2013}
G. Liao, M. Viana, J. Yang, {\it The entropy conjecture for diffeomorphisms away from tangencies},  J. Eur. Math. Soc. 15 (6) (2013) 2043–2060.






\bibitem{Mane-ex} R. Ma\~{n}\'{e}, {\it Contributions to the stability conjecture,} Topology 17 (1978) 383–396. 







\bibitem{Newho1979} S. E. Newhouse, {\it The abundance of wild hyperbolic sets and nonsmooth stable sets for diffeomorphisms,} Inst. Hautes Études Sci. Publ. Math. 50 (1979), 101–151.











\bibitem{PV2008} M. J. Pacifico and J. L. Vieitez, {\it Entropy-expansiveness and domination for surface diffeomorphisms}, Rev. Mat. Complut. 21 (2008), 293–317.





\bibitem{Pesin1997}
Y. B. Pesin, {\it Dimension Theory in Dynamical Systems: Contemporary Views and Applications}, University of Chicago Press, Chicago, IL, 1997.


\bibitem{Pesin-Pitskel1984}
Y. B. Pesin and B. Pitskel', {\it Topological pressure and the variational principle for noncompact sets}, Funct. Anal. Appl. 18 (1984), 307–318.


\bibitem{PS}
C. E. Pfister and W. G. Sullivan, {\it Large deviations estimates for dynamical systems without the specification property. Application to the $\beta$-shifts}, Nonlinearity 18(1) (2005), 237–261.


\bibitem{PS2}
C. E. Pfister and W. G. Sullivan, {\it On the topological entropy of saturated sets}. Ergod. Th. \& Dynam. Sys. 27(3) (2007), 929–956.







\bibitem{Ru} D. Ruelle, {\it  An inequality for the entropy of differentiable maps,} Bol. Soc. Bras. Mat. 9 (1978), 83–87. 




\bibitem{Ruelle}
D. Ruelle, {\it Historic behaviour in smooth dynamical systems}, Global Analysis of Dynamical Systems, (2001), 63-66.






\bibitem{SigSpe}
K. Sigmund, {\it On dynamical systems with the speciﬁcation property}, Trans. Amer. Math. Soc. 190 (1974), 285–299.




















\bibitem{Takens}
F. Takens, {\it Orbits with historic behaviour, or non-existence of averages},  Nonlinearity  21 (2008), 33-36.










\bibitem{Thompson2008}
D. Thompson, {\it The irregular set for maps with the specification property has full topological pressure}. Dyn. Syst. 25(1) (2010), 25–51.









\bibitem{TianVarandas}
X. Tian and P. Varandas, {\it Topological entropy of level sets of empirical measures for non-uniformly expanding maps}, Discrete Contin. Dyn. Syst. A 37(10) (2017), 5407–5431.



































\end{thebibliography}
\end{document}